\documentclass[a4paper,10pt,twoside]{amsart}
\usepackage{hyperref}
\usepackage{amsfonts}
\usepackage{amssymb}
\usepackage{amsmath}
\usepackage[cmtip,all]{xy}
\usepackage{lscape}
\usepackage[usenames]{color}

\def\white{\color{white}}

\textwidth=150truemm \textheight=225truemm \topmargin=5mm
\oddsidemargin=.5cm \evensidemargin=.5cm \headsep=3mm

\newtheorem{proposition}{Proposition}[section]
 \newtheorem{lemma}[proposition]{Lemma}
 \newtheorem{theorem}[proposition]{Theorem}
 \newtheorem{corollary}[proposition]{Corollary}
\theoremstyle{definition}
 \newtheorem{definition}[proposition]{Definition}
 \newtheorem*{definition*}{Definition}
 
 \newtheorem*{example*}{Example}

\numberwithin{equation}{section}

\def\id{\mathsf{id}}
\def\RRU{\mathsf{RRU}}
\def\LRU{\mathsf{LRU}}
\def\RLU{\mathsf{RLU}}
\def\LLU{\mathsf{LLU}}
\def\RRC{\mathsf{RRC}}
\def\LRC{\mathsf{LRC}}
\def\RLC{\mathsf{RLC}}
\def\LLC{\mathsf{LLC}}
\def\bc{\begin{center}}
\def\ec{\end{center}}

\def\ox{\otimes}

\begin{document}

\title{Weak bimonoids in duoidal categories}

\author{Yuanyuan Chen}
\address{Department of Mathematics, Nanjing Agricultural University, Nanjing
210095, P.R. China.}

\author{Gabriella B\"ohm}
\address{Wigner Research Centre for Physics, Budapest,
H-1525 Budapest 114, P.O.B.\ 49, Hungary}
\email{bohm.gabriella@wigner.mta.hu}

\thanks{Y.Y. Chen participated in this research as a fellow of the Hungarian
Scholarship Board. Her work was supported also by the Innovative
Project of Jiangsu province for Graduate Cultivation.}

\begin{abstract}
{\em Weak bimonoids} in duoidal categories are introduced. They provide a
common generalization of bimonoids in duoidal categories and of weak bimonoids
in braided monoidal categories. Under the assumption that idempotent morphisms
in the base category split, they are shown to induce weak bimonads (in four
symmetric ways). As a consequence, they have four separable Frobenius base
(co)monoids, two in each of the underlying monoidal categories.
Hopf modules over weak bimonoids are defined by weakly lifting the
induced comonad to the Eilenberg-Moore category of the induced monad.
Making appropriate assumptions on the duoidal category in question, the
fundamental theorem of Hopf modules is proven which says that the category of
modules over one of the base monoids is equivalent to the category of Hopf
modules if and only if a Galois-type comonad morphism is an isomorphism. 
\end{abstract}
\keywords{duoidal category, weak bimonad, weak lifting, weak bimonoid,
Hopf module}
\subjclass[2010]{16T05, 18D10, 18C15, 18C20}
\date{2013 June}
\maketitle

\section*{Introduction}

Weak bialgebras were introduced first in the symmetric monoidal category of
(finite dimensional) vector spaces \cite{BSz,Nill:WBA,BNSz:WHAI}. They were
generalized to braided monoidal categories (with split idempotents) in
\cite{C. Pastro,AlAlatal}. The characteristic feature of weak bialgebras is
the behavior of the category of their (co)modules. Similarly to usual,
non-weak bialgebras, these categories are monoidal. However, as a
`weak' feature, the monoidal structure is different from that in the base
category. That is to say, the forgetful functor is no longer strict monoidal
as in the case of non-weak bialgebras, but it possesses a so-called `separable
Frobenius' monoidal structure \cite{Szlach}. This means that it is both
monoidal and opmonoidal but the morphisms which are responsible for these
structures, are not mutually inverses of each other. The binary part of the
monoidal structure only provides a left inverse of the binary
part of the opmonoidal structure and some compatibility conditions ---
reminiscent to those between the multiplication and the comultiplication of a
Frobenius algebra --- hold. This property of the forgetful functor provides
the basis of a generalization of weak bialgebras beyond braided monoidal base
categories; to so-called `weak bimonads' in \cite{G. Bohm2011}. 

Weak bimonads are monads on a monoidal category whose idempotent morphisms
split, equipped with the additional structures that are equivalent to their
Eilenberg-Moore category being monoidal with a forgetful functor possessing a
separable Frobenius monoidal structure. The separable Frobenius monoidal
forgetful functor takes the monoidal unit of the Eilenberg-Moore category to a
separable Frobenius monoid in the base category \cite{McCStr} which is
regarded as the `base monoid' of the weak bimonad in question. The monoidal
structure of the Eilenberg-Moore category is given in fact by the module
tensor product over the base monoid.

Braided monoidal categories were generalized to so-called `duoidal categories'
in \cite{M. Aguiar} (where they were termed `2-monoidal categories'). These
are categories carrying two, possibly different monoidal structures. The
monoidal structures are required to be compatible in the sense that the
functors and natural transformations defining the first monoidal structure,
are opmonoidal with respect to the second monoidal structure. Equivalently,
the functors and natural transformations defining the second monoidal
structure, are monoidal with respect to the first monoidal structure (for more
details see Section \ref{sec:duo_cat}). Also bimonoids in duoidal
categories were defined in \cite{M. Aguiar}. These are objects which are
monoids with respect to the first monoidal product $\circ$ and comonoids with
respect to the second monoidal product $\bullet$. The compatibility axioms are 
formulated in terms of the coherence morphisms between the monoidal
structures. These bimonoids generalize bialgebras in braided monoidal
categories in such a way that their categories of modules (respectively,
comodules) are still monoidal via the monoidal product $\bullet$
(respectively, $\circ$) lifted from the base category. In other words, they
induce `bimonads' (termed `Hopf monads' in \cite{I. Moerdijk}) with
respect to $\bullet$ (respectively, `bicomonads' with respect to $\circ$), see
\cite{T. Booker}.

The first aim of this paper is to find a common generalization of weak
bimonoids in braided monoidal categories and of bimonoids in duoidal
categories.
So we consider an object in a duoidal category, which is a monoid with respect
to the first monoidal structure and a comonoid with respect to the
second monoidal structure. We look for compatibility conditions between them
which imply the expected behavior of the (co)module categories: their
monoidality via the (co)module tensor product over some separable Frobenius
base monoid.
The proposed axioms of what we call a {\em weak bimonoid} are presented in
Section \ref{sec:axioms} where we also study their behavior under the various
duality transformations in a duoidal category. As a main result, a weak
bimonoid in a duoidal category in which idempotent morphisms split, is shown
to induce four weak bi(co)monads (two on each of the underlying monoidal
categories).
Corresponding to the four induced weak bi(co)monads, there are four associated
`base objects' (two in each of the underlying monoidal categories), all of
them carrying the structure of separable Frobenius monoid. Their properties
are investigated in Section \ref{sec:modcat} and their relations to each other
are studied in Section \ref{sec:base}.

Associated to any bimonoid in a duoidal category, there is a mixed
distributive law (in the sense of \cite{J. Beck}, also known as an `entwining
structure', see \cite{BrzMaj}) between the induced monad and comonad. Its
mixed modules are known as `Hopf modules' \cite{G. Bohm2012}.
In order to describe Hopf modules over weak bialgebras (even over a field),
however, mixed distributive laws had to be generalized to `weak mixed
distributive laws' in \cite{CaeDeGr} (where they were termed `weak
entwining structures'). In Section \ref{sec:rel_Hopf} we construct a weak mixed
distributive law between the monad and the comonad induced by a weak bimonoid
in a duoidal category. Hopf modules are defined as its mixed modules. Applying
the theory of weak liftings in \cite{Bohm:WTM}, we also construct a
(comparison) functor from the category of modules over one of the
base monoids to the category of Hopf modules. In Section \ref{sec:fthm} we
prove the fundamental theorem of Hopf modules: Under appropriate
assumptions on the duoidal category in question, we show that this
comparison functor is an equivalence if and only if a canonical Galois-type
comonad morphism is an isomorphism. Recall that these equivalent
properties distinguish weak Hopf algebras between weak bialgebras 
\cite[Section 36.16]{BrzWis}. 
In contrast to \cite{G. Bohm2012} --- where in our study of non-weak bimonoids
we used directly Beck's theory \cite{J. Beck} to construct the inverse of the
comparison functor --- here we take a shorter route. The proof in Section
\ref{sec:fthm} is based on a recent result due to Mesablishvili and Wisbauer
in \cite{B. Mesablishvili} about the properties of a functor occurring in the
factorization of some separable left adjoint functor on a category whose
idempotent morphisms split. 

\section{Preliminaries}\label{sec:prelims}

\subsection{Duoidal categories} \label{sec:duo_cat}
In this section we recall from \cite{M. Aguiar} some information about
so-called duoidal (also known as 2-monoidal) categories. The occurring
monoidal structures are not assumed to be strict. However, for brevity, we
omit explicitly denoting the associator and the unitors. Throughout, the
composition of any morphisms $\varphi$ and $\psi$ is denoted by
$\varphi.\psi$. 

\begin{definition} \cite[Definition 6.1]{M. Aguiar}
A {\it duoidal category} is a category $\mathcal{M}$ equipped with two
monoidal products $\circ$ and $\bullet$ with respective units $I$ and
$J$, along with morphisms
$$
\xymatrix{I\ar[r]^-{\delta} &I\bullet I,} \ \
\xymatrix{J\circ J\ar[r]^-{\varpi}&J,} \ \
\xymatrix{I\ar[r]^-\tau &J,}
$$
and, for all objects $A,B,C,D$ in $\mathcal M$, a morphism
$$
\zeta_{A,B,C,D}:(A\bullet B)\circ(C\bullet D)\rightarrow (A\circ
C)\bullet(B\circ D),
$$
called the {\it interchange law}, which is natural in all of the four
occurring objects. These morphisms are required to obey the axioms
below.

\textbf{Compatibility of the units.} The monoidal units $I$ and
$J$ are compatible in the sense that
\begin{center}
$(J,\varpi,\tau)$ is a monoid in $(\mathcal{M},\circ,I)$ and
$(I,\delta,\tau)$ is a comonoid in $(\mathcal{M},\bullet,J)$.
\end{center}

\textbf{Associativity.} For all objects $A,B,C,D,E,F$ in $\mathcal
M$, the following diagrams commute.
\begin{equation} \label{eq1.1}
\xymatrix{
(A\bullet B)\circ(C\bullet D)\circ(E\bullet F)
\ar[d]_{\zeta\circ (E\bullet F)}
\ar[r]^{(A\bullet B)\circ\zeta}& 
(A\bullet B)\circ((C\circ E)\bullet(D\circ F))\ar[d]^{\zeta}\\ 
((A\circ C)\bullet(B\circ D))\circ(E\bullet F)\ar[r]_-{\zeta}& 
(A\circ C\circ E)\bullet(B\circ D\circ F) 
\\
(A\bullet B\bullet C)\circ(D\bullet E\bullet F)
\ar[d]_{\zeta}
\ar[r]^{\zeta}& 
(A\circ D)\bullet((B\bullet C)\circ(E\bullet F))
\ar[d]^{(A\circ D)\bullet\zeta}\\ 
((A\bullet B)\circ(D\bullet E))\bullet(C\circ F)
\ar[r]_-{\zeta\bullet (C\circ F)} & 
(A\circ D)\bullet(B\circ E)\bullet (C \circ F) 
}
\end{equation}

\textbf{Unitality.} For all objects $A,B$ in $\mathcal M$, the
following diagrams commute.
\begin{equation}\label{eq1.2}
\xymatrix{
A\bullet B \ar[r]^-{\delta\circ (A\bullet B)} \ar@{=}[rd]
\ar[d]_-{(A\bullet B)\circ \delta}& 
(I\bullet I)\circ(A\bullet B)\ar[d]^-{\zeta} & 
(J\circ J)\bullet(A\circ B) \ar[r]^-{\varpi\bullet (A\circ B)} & 
A\circ B \\ 
(A\bullet B)\circ (I\bullet I)\ar[r]_-\zeta& 
A \bullet B & 
A \circ B \ar[u]^-{\zeta}\ar@{=}[ru] \ar[r]_-\zeta& 
(A\circ B)\bullet (J\circ J) \ar[u]_-{(A\circ B)\bullet \varpi} 
}
\end{equation}
\end{definition}

The simplest examples of duoidal categories are braided monoidal
categories. In this case, both monoidal products coincide and the interchange
law is induced by the braiding, see \cite[Section 6.3]{M. Aguiar}.

In any duoidal category, for any objects $A$ and $B$, diagrams of the type
\begin{equation}\label{eq:coh}
\xymatrix@C=40pt{
(A\bullet I)\circ B \ar[d]_-\zeta \ar[r]^-{(A\bullet \tau)\circ B}&
A\circ B\ar[d]_-\zeta\ar@{=}@/^1.3pc/[rd]
\ar@{}[rd]|-{\eqref{eq1.2}}\\
A\circ B \ar[r]^-{(A\circ B)\bullet (\tau\circ J)}
\ar@{=}@/_1.2pc/[rr]&
(A\circ B)\bullet (J\circ J)\ar[r]^-{(A\circ B)\bullet\varpi}&
A\circ B}
\end{equation}
commute, see \cite[Proposition 6.8]{M. Aguiar}.

\subsection{Weak bimonads}\label{sec:wbm}
The modules over a weak bialgebra (say, over a field) constitute a
monoidal category. However, the monoidal product is different from the tensor
product of vector spaces. This behavior of the category of modules was the
basis of the generalization of weak bialgebras to weak bimonads in
\cite{G. Bohm2011}.

{\em Weak bimonads} on a monoidal category (with monoidal product $\ox$
and monoidal unit $K$) were defined in \cite[Definition 1.3]{G. Bohm2011} as
monads equipped with a monoidal structure of their Eilenberg-Moore category of
modules (or algebras) and a separable Frobenius monoidal structure (in the
sense of \cite{Szlach}) of the forgetful functor to the base
category. Whenever idempotent morphisms in the base category split, this
definition turns out to be equivalent to an opmonoidal structure 
$(T_2:T(-\ox -)\to T(-)\ox T(-),T_0:TK\to K)$ subject to five compatibility
axioms in \cite[Theorem 1.5]{G. Bohm2011} with the monad structure
$(\mu:T^2\to T,\eta:\id \to T)$.

Recall from \cite[proof of Proposition 1.11]{G. Bohm2011} that if $T$ is
a weak bimonad on a monoidal category in which idempotent morphisms split,
then for any $T$-modules $(F,\varphi)$ and $(G,\gamma)$ there is an idempotent
morphism 
\begin{equation}\label{eq:chi_gen}
\xymatrix{
\chi_{F,G}: F\ox G\ar[r]^-{\eta_{F\otimes G}}
& T(F\otimes G)\ar[r]^-{T_2}
& TF\otimes TG\ar[r]^-{\varphi\otimes\gamma}
& F\otimes G}
\end{equation}
which is natural in $(F,\varphi)$ and $(G,\gamma)$. The monoidal product of
the $T$-modules $(F,\varphi)$ and $(G,\gamma)$ is the object through which
this idempotent morphism splits. Also the monoidal unit $R$ in the category of
$T$-modules is obtained by splitting an idempotent morphism
\begin{equation}\label{sqcap1}
\sqcap:\xymatrix@C=30pt{
TK \ar[r]^-{\eta TK}
& T^2K \ar[r]^-{T_2}
& TK\otimes T^2K\ar[r]^-{TK\otimes\mu_K}
& TK\otimes TK\ar[r]^-{TK \otimes T_0}
& TK}
\end{equation}
as $\sqcap=\xymatrix@C=20pt{TK \ar@{->>}[r]|-{\,\pi\,} & R\,
\ar@{>->}[r]|-{\,\iota\,}& TK.}$ 
By unitality of the monad $T$ and counitality of its opmonoidal
structure,
\begin{equation}\label{eq:T0_Pi}
\sqcap.\eta K=\eta K
\qquad \textrm{and}\qquad
T_0.\sqcap=T_0.
\end{equation}
By \cite[eq. (1.10)]{G. Bohm2011}, the morphism $\sqcap$ in \eqref{sqcap1}
renders commutative 
\begin{equation}\label{eq:wbm_1.10}
\xymatrix{
T^2K\ar[r]^-{T\sqcap}\ar[d]_-{\mu K}&
T^2K\ar[r]^-{\mu K}&
TK\ar[d]^-\sqcap\\
TK\ar[rr]_-\sqcap&&
TK.}
\end{equation}
The $T$-action on $R$ is $\pi.\mu_K.T\iota$ (so that $\pi$ is a morphism of
$T$-modules by \eqref{eq:wbm_1.10}).
Moreover, $R$ carries the structure of a separable Frobenius monoid in the
base category (with multiplication and comultiplication obtained by the
splitting
$\chi_{R,R}=\xymatrix@C=25pt{R\ox R \ar@{->>}[r]|-{\, \mu^R\,}&
R\, \ar@{>->}[r]|-{\,\Delta^R\,}&R\ox R}$ of \eqref{eq:chi_gen},
see \cite[eqs. (2.1)-(2.2)]{G. Bohm2011}); and the monoidal product of
$T$-modules turns out to be a module tensor product over $R$.
By the second equality in \eqref{eq:T0_Pi} and by commutativity of 
$$
\xymatrix{
TK\ar[rrr]^-\pi\ar[dd]_-{T_2}\ar[rd]^-{T_2}&&&
R\ar[dd]^-{\Delta_R}\\
&TK\ox TK\ar[r]^-{\pi\ox \pi}\ar[ld]_-{\chi}&
R\ox R\ar[rd]^-{\chi}\ar[ru]^-{\mu_R}\\
TK\ox TK\ar[rrr]_-{\pi \ox \pi}&&&
R\ox R,}
$$
$\pi:TK \to R$ is a morphism of comonoids. (The region on the left commutes by
the weak bimonad axiom \cite[eq. (1.7)]{G. Bohm2011} and unitality of the
monad $T$; and the region at the top commutes by the naturality of $\chi$,
counitality of the opmonoidal functor $T$ and the second equality in 
\eqref{eq:T0_Pi} and $T_2=\chi.T_2$ again, taking into account the explicit
expression of $\mu_R$ in \cite[eq. (2.1)]{G. Bohm2011}.) 

Whenever $T$ is a weak bimonad on a monoidal category (whose idempotent
morphisms split), it is a weak bimonad (with the same monad, and opmonoidal
structures) on the opposite monoidal category as well. Corresponding to this
latter weak bimonad on the opposite monoidal category, there is a symmetric
counterpart
$$
\overline\sqcap:\xymatrix@C=30pt{
TK \ar[r]^-{\eta TK}
& T^2K \ar[r]^-{T_2}
& T^2K\otimes TK\ar[r]^-{\mu K\otimes TK}
& TK\otimes TK\ar[r]^-{T_0 \otimes TK}
& TK}
$$
of $\sqcap$ in \eqref{sqcap1}. It is shown in \cite[page 12]{G. Bohm2011} that
$\sqcap. \overline \sqcap = \sqcap$. Symmetrically, $\overline \sqcap. \sqcap
= \overline \sqcap$. Hence taking the splittings 
$$
\sqcap =\xymatrix@C=15pt{
TK \ar@{->>}[r]^-{\pi}&
R\, \ar@{>->}[r]^-{\iota}&
TK}
\quad \textrm{and}\quad
\overline\sqcap=\xymatrix@C=15pt{
TK \ar@{->>}[r]^-{\overline \pi}&
\overline R\, \ar@{>->}[r]^-{\overline \iota}&
TK}
$$
of these idempotent morphisms, we obtain mutually inverse isomorphisms
\begin{equation}\label{eq:R_Rbar_iso}
\xymatrix@C=15pt{
R \ar[r]^-{\iota}&
TK \ar[r]^-{\overline \pi}&
\overline R}
\quad \textrm{and}\quad
\xymatrix@C=15pt{
\overline R \ar[r]^-{\overline \iota}&
TK \ar[r]^-{\pi}&
R.}
\end{equation}
Together with the natural transformation \eqref{eq:chi_gen}, they render
commutative 
$$
\xymatrix@R=10pt{
R \otimes R 
\ar[rr]^-{\chi_{R,R}}
\ar[rd]_-{\mu^R}
\ar[ddd]_-{\iota \otimes \iota}&&
R \otimes R
\ar[ddd]^-{\iota \otimes \iota}\\
&R
\ar[ru]_-{\Delta^R}
\ar[d]^-{\Delta^R}\\
&
R \otimes R 
\ar[d]^-{\iota \otimes \iota}\\
TK \otimes TK
\ar[ddd]_-{\overline\pi\otimes \overline\pi}&
TK \otimes TK
\ar[d]^-{\overline\pi\otimes \overline\pi}&
TK \otimes TK
\ar[ddd]^-{\overline\pi\otimes \overline\pi}\\
&
\overline R\otimes \overline R
\ar[d]^-{\mu^{\overline R}}\\
&\overline R
\ar[rd]^-{\Delta^{\overline R}}\\
\overline R\otimes \overline R
\ar[ru]^-{\mu^{\overline R}}
\ar[rr]_-{\chi_{\overline R,\overline R}}&&
\overline R\otimes \overline R .}
$$
The vertical arrow $R \to \overline R$ at the middle obeys
\begin{eqnarray*}
\mu^{\overline R}.(\overline\pi\otimes \overline\pi).(\iota \otimes \iota).
\Delta^R.\pi &=&
\mu^{\overline R}.(\overline\pi\otimes \overline\pi).(\iota \otimes \iota).
(\pi \otimes \pi).\Delta^{TK}\\
&=&\mu^{\overline R}.(\overline\pi\otimes \overline\pi).\Delta^{TK}=
\mu^{\overline R}.\Delta^{\overline R}.\overline\pi=
\overline\pi=
\overline\pi.\iota.\pi.
\end{eqnarray*}
In the first and the third equalities we used that $\pi$ and $\overline \pi$
are comultiplicative. In the second and the last equalities we used that
$\overline \sqcap. \sqcap = \overline \sqcap$ hence
$\overline\pi.\iota.\pi=\overline\pi$. In the fourth equality we used that
$\mu^{\overline R}.\Delta^{\overline R}$ is the identity (that is, the
separability of $\overline R$). Since $\pi$ is an epimorphism, this
proves that the vertical arrow $R \to \overline R$ at the middle of the
diagram is equal to the isomorphism in \eqref{eq:R_Rbar_iso}, hence it is both
multiplicative and comultiplicative. It is also unital and counital by
\eqref{eq:T0_Pi} and the explicit expressions of the unit and the counit in
\cite[(2.1) and (2.2)]{G. Bohm2011}. This proves that \eqref{eq:R_Rbar_iso} 
is an isomorphism of monoids and comonoids. 

The forgetful functor, from the Eilenberg-Moore category of $T$-modules to the
base category, factorizes through the category of $R$-bimodules \cite[page
14]{G. Bohm2011}. That is, any $T$-module carries canonical (commuting) left
and right $R$-actions and any morphism of $T$-modules is compatible with
them. For example, \eqref{eq:chi_gen} is a morphism of $T$-modules (hence so
are its splitting mono- and epimorphisms), thus it is a morphism of
$R$-bimodules. 

In the diagram
\begin{equation}\label{eq:theta_forms}
\xymatrix@C=30pt{
T^2K \ar[r]^-{TT_2}\ar@{=}[rd]
& T(TK\otimes TK)\ar[r]^-{T_2}\ar[d]^-{T(T_0 \ox TK)}
& T^2K\otimes T^2K\ar[r]^-{T^2K\ox \mu_K}\ar[d]^-{TT_0\ox T^2K}
& T^2K\otimes TK\ar[d]^-{T^2K\ox T_0}\\
& T^2K\ar[r]_-{T_2}
& TK\otimes T^2K\ar[d]^-{TK\ox \mu_K}
& T^2K\ar[d]^-{TT_0} \\
&& TK\otimes TK\ar[r]_-{TK\ox T_0}
& TK,}
\end{equation}
the triangle on the left commutes by counitality of the opmonoidal structure,
and both regions on the right commute by naturality of $T_2$ and functoriality
of the monoidal product. This allows to write $\sqcap$ in the alternative form 
\begin{equation}\label{resqcap}
\xymatrix@C=25pt{
TK \ar[r]^-{\raisebox{5pt}{${}_{\eta TK}$}}
& T^2K \ar[r]^-{\raisebox{5pt}{${}_{TT_2}$}}
& T(TK\otimes TK)\ar[r]^-{\raisebox{5pt}{${}_{T_2}$}}
& T^2K\otimes T^2K\ar[r]^-{\raisebox{5pt}{${}_{T^2K\ox \mu_K}$}}
& T^2K\otimes TK\ar[r]^-{\raisebox{5pt}{${}_{TT_0\ox T_0}$}}
& TK.}
\end{equation}

\section{The weak bimonoid axioms}\label{sec:axioms}

In this section we introduce the central notion of the paper: weak bimonoid
in a duoidal category. It is shown to provide a common generalization of
bimonoids in duoidal categories \cite[Definition 6.25]{M. Aguiar}, and of weak
bimonoids in braided monoidal categories \cite{C. Pastro}. If idempotent
morphisms in the base category split, any weak bimonoid is proven to induce
four bi(co)monads.

\begin{definition}\label{Def:weak bimonoid}
A {\it right weak bimonoid} in a duoidal category $\mathcal{M}$
is an object $A$ equipped with a monoid structure $(A\circ A \stackrel \mu \to
A,I\stackrel \eta\to A)$ in $(\mathcal{M},\circ)$ and a comonoid structure
$(A \stackrel \Delta\to A \bullet
A,A\stackrel \varepsilon\to J)$ in $(\mathcal{M},\bullet)$,
subject to the five compatibility axioms listed below.

In the same way as for a bimonoid, the multiplication is required to be
comultiplicative, equivalently, the comultiplication is required to be
multiplicative:
\begin{equation}\label{eq:WB}
\xymatrix {
(A\bullet A)\circ(A\bullet A)\ar[rr]^-{\zeta}
&& (A\circ A)\bullet (A\circ A)\ar[d]^-{\mu\bullet\mu}\\
A\circ A\ar[u]^-{\Delta\circ\Delta}\ar[r]_-{\mu} & A\ar[r]_-{\Delta} &
A\bullet A
}
\end{equation}
Comultiplicativity of the unit is replaced by two weaker conditions
$$
\xymatrix @R=15pt@C=11pt{
((I\bullet A)\circ (I\bullet A\bullet A))\bullet A
\ar[d]_-{\zeta\bullet A}\ar@{}[rrrrdd]|-{(\RRU)}&&
((I\bullet A)\circ (I\bullet I))\bullet A
\ar[ll]_-{((I\bullet A)\circ (I\bullet\Delta.\eta))\bullet A}&&
I\bullet A\bullet A
\ar[ll]_-{((I\bullet A)\circ \delta)\bullet A}\\
I\bullet A\bullet(A\circ A)\bullet A
\ar[d]_-{ I\bullet A\bullet\mu\bullet A}&&&&
I\bullet I \ar[u]_-{I\bullet\Delta.\eta}\\
I\bullet A\bullet A\bullet A&
I\bullet A\bullet A \ar[l]^-{I\bullet A\bullet\Delta }&
I\bullet A\ar[l]^-{I\bullet \Delta}&
I\bullet I\ar[l]^-{I\bullet\eta}&
I \ar[u]_-{\delta}\ar[l]^-{\delta}\\
A\bullet((A\bullet I)\circ (A\bullet A\bullet I))
\ar[d]_-{A\bullet\zeta}\ar@{}[rrrrdd]|-{(\LRU)}&&
A\bullet((A\bullet I)\circ (I\bullet I))
\ar[ll]_-{A\bullet((A\bullet I)\circ (\Delta.\eta\bullet I))}&&
A\bullet A\bullet I
\ar[ll]_-{A\bullet((A\bullet I)\circ\delta)}\\
A\bullet (A\circ A)\bullet A\bullet I
\ar[d]_-{A\bullet\mu\bullet A\bullet I} &&&&
I\bullet I \ar[u]_-{\Delta.\eta\bullet I}\\
A\bullet A\bullet A\bullet I&
A\bullet A\bullet I \ar[l]^-{\Delta\bullet A\bullet I}&
A\bullet I\ar[l]^-{\Delta\bullet I}&
I\bullet I\ar[l]^-{\eta\bullet I}&
I. \ar[u]_-{\delta}\ar[l]^-{\delta}}
$$
Multiplicativity of the counit is replaced by two weaker conditions
$$
\xymatrix @R=15pt@C=10pt{
((J\circ A\circ A)\bullet(J\circ A))\circ A
\ar@{}[rrrrdd]|-{(\LRC)}
\ar[rr]^-{((J\circ\varepsilon.\mu)\bullet(J\circ A))\circ A}
&& ((J\circ J)\bullet(J\circ A))\circ A
\ar[rr]^-{(\varpi\bullet(J\circ A))\circ A}
&& J\circ A\circ A\ar[d]^-{J\circ\varepsilon.\mu} \\
J\circ A\circ(A\bullet A)\circ A \ar[u]^{\zeta\circ A}
&&&& J\circ J\ar[d]^-{\varpi} \\
J\circ A\circ A\circ A \ar[u]^-{J\circ A\circ\Delta\circ A}
\ar[r]_-{J\circ A\circ\mu}
& J\circ A\circ A\ar[r]_-{J\circ \mu}
& J\circ A\ar[r]_-{J\circ \varepsilon}
& J\circ J \ar[r]_-{\varpi}
& J\\
((J\circ A)\bullet(J\circ A\circ A))\circ A
\ar@{}[rrrrdd]|-{(\LLC)}
\ar[rr]^-{((J\circ A)\bullet(J\circ\varepsilon.\mu))\circ A}
&& ((J\circ A)\bullet(J\circ J))\circ A
\ar[rr]^-{((J\circ A)\bullet\varpi)\circ A}
&& J\circ A\circ A\ar[d]^-{J\circ\varepsilon.\mu} \\
J\circ A\circ(A\bullet A)\circ A \ar[u]^{\zeta\circ A}
&&&& J\circ J\ar[d]^-{\varpi} \\
J\circ A\circ A\circ A \ar[u]^-{J\circ A\circ\Delta\circ A}
\ar[r]_-{J\circ A\circ\mu}
& J\circ A\circ A\ar[r]_-{J\circ \mu}
& J\circ A\ar[r]_-{J\circ \varepsilon}
& J\circ J \ar[r]_-{\varpi}
& J.}
$$
\end{definition}
A duoidal structure can be twisted in three different ways, see \cite[Section
6.1.2]{M. Aguiar}.
One can replace any of the monoidal products by the opposite one; and as
a third possibility, one can interchange the roles of both monoidal
products and change simultaneously the composition of morphisms to the
opposite one. That is, associated to any duoidal category $(\mathcal M, \circ,
\bullet)$, there are seven more duoidal categories $(\mathcal M,
\circ^{\mathrm{op}}, \bullet)$, $(\mathcal M, \circ, \bullet^{\mathrm{op}})$,
$(\mathcal M, \circ^{\mathrm{op}}, \bullet^{\mathrm{op}})$, 
$(\mathcal M^{\mathrm{op}}, \bullet,\circ)$, $(\mathcal M^{\mathrm{op}},
\bullet,\circ^{\mathrm{op}})$, $(\mathcal M^{\mathrm{op}},
\bullet^{\mathrm{op}},\circ)$, $(\mathcal M^{\mathrm{op}},
\bullet^{\mathrm{op}},\circ^{\mathrm{op}})$. 

Alternatively, changing the passive and active points of view, to any diagram
in $\mathcal M$ one can apply the transformations $\circ\leftrightarrow
\circ^{\mathrm{op}}$, $\bullet \leftrightarrow \bullet^{\mathrm{op}}$ or
$(\circ\leftrightarrow \bullet, \mathcal M \leftrightarrow \mathcal
M^{\mathrm{op}})$ --- which will be denoted by $\circ$, $\bullet$ and $\ast$,
respectively --- and any of their composites. Clearly, the diagrams defining a
monoid in $(\mathcal M,\circ)$ are invariant under the transformation $\circ$
or $\bullet$; and they are taken by $\ast$ to the diagrams defining a comonoid
in $(\mathcal M,\bullet)$. Symmetrically, the diagrams defining a comonoid in
$(\mathcal M,\bullet)$ are invariant under the transformation $\circ$ or
$\bullet$; and they are taken by $\ast$ to the diagrams defining a monoid in
$(\mathcal M,\circ)$. What is more, any of the transformations $\circ$,
$\bullet$ and $\ast$ leaves invariant the set of diagrams defining a bimonoid
in $(\mathcal M,\circ,\bullet)$. 

Our next task is to study the behavior of the weak bimonoid axioms
$(\RRU), (\LRU), (\LLC), (\LRC)$ in Definition \ref{Def:weak bimonoid} under
these transformations. The set of these diagrams is closed under the action of
$\bullet$ (which connects $(\RRU)$ with $(\LRU)$, and $(\LLC)$ with
$(\LRC)$). It is not closed, however, under the action of the other two
transformations. We obtain a set of diagrams which are closed under all
transformations $\circ$, $\bullet$ and $\ast$, if we add the images
under $\circ$. We denote the images of $(\RRU), (\LRU), (\LLC), (\LRC)$ under
$\circ$ by $(\RLU), (\LLU), (\RLC), (\RRC)$, respectively. Explicitly, they
read as
$$
\xymatrix @R=15pt@C=10pt{
((I\bullet A\bullet A)\circ (I\bullet A))\bullet A
\ar[d]_-{\zeta\bullet A}\ar@{}[rrrrdd]|-{(\RLU)}&&
((I\bullet I)\circ (I\bullet A))\bullet A
\ar[ll]_-{((I\bullet\Delta.\eta)\circ (I\bullet A))\bullet A}&&
I\bullet A\bullet A
\ar[ll]_-{(\delta\circ(I\bullet A))\bullet A}\\
I\bullet A\bullet(A\circ A)\bullet A
\ar[d]_-{ I\bullet A\bullet\mu\bullet A}&&&&
I\bullet I \ar[u]_-{I\bullet\Delta.\eta}\\
I\bullet A\bullet A\bullet A &
I\bullet A\bullet A \ar[l]^-{I\bullet A\bullet\Delta }&
I\bullet A\ar[l]^-{I\bullet \Delta}&
I\bullet I\ar[l]^-{I\bullet\eta}&
I \ar[u]_-{\delta}\ar[l]^-{\delta}\\
A\bullet((A\bullet A\bullet I)\circ (A\bullet I))
\ar[d]_-{A\bullet\zeta} \ar@{}[rrrrdd]|-{(\LLU)}&&
A\bullet((I\bullet I)\circ (A\bullet I))
\ar[ll]_-{A\bullet((\Delta.\eta\bullet I)\circ (A\bullet I))}&&
A\bullet A\bullet I
\ar[ll]_-{A\bullet(\delta\circ(A\bullet I))}\\
A\bullet (A\circ A)\bullet A\bullet I
\ar[d]_-{A\bullet\mu\bullet A\bullet I} &&&&
I\bullet I \ar[u]_-{\Delta.\eta\bullet I}\\
A\bullet A\bullet A\bullet I&
A\bullet A\bullet I \ar[l]^-{\Delta\bullet A\bullet I}&
A\bullet I\ar[l]^-{\Delta\bullet I}&
I\bullet I\ar[l]^-{\eta\bullet I}&
I \ar[u]_-{\delta}\ar[l]^-{\delta}\\
A\circ((A\circ A\circ J)\bullet(A\circ J))
\ar@{}[rrrrdd]|-{(\RRC)}
\ar[rr]^-{A\circ((\varepsilon.\mu\circ J)\bullet(A\circ J))}
&& A\circ((J\circ J)\bullet(A\circ J))
\ar[rr]^-{A\circ (\varpi\bullet(A\circ J))}
&& A\circ A\circ J\ar[d]^-{\varepsilon.\mu\circ J} \\
A\circ(A\bullet A)\circ A\circ J \ar[u]^{A\circ\zeta}
&&&& J\circ J\ar[d]^-{\varpi} \\
A\circ A\circ A\circ J \ar[u]^-{A\circ\Delta\circ A\circ J}
\ar[r]_-{\mu\circ A\circ J}
& A\circ A\circ J\ar[r]_-{\mu\circ J}
& A\circ J\ar[r]_-{\varepsilon\circ J}
& J\circ J \ar[r]_-{\varpi}
& J\\
A\circ((A\circ J)\bullet(A\circ A\circ J))
\ar@{}[rrrrdd]|-{(\RLC)}
\ar[rr]^-{A\circ((A\circ J)\bullet(\varepsilon.\mu\circ J))}
&& A\circ((A\circ J)\bullet(J\circ J))
\ar[rr]^-{A\circ((A\circ J)\bullet\varpi)}
&& A\circ A\circ J\ar[d]^-{\varepsilon.\mu\circ J} \\
A\circ(A\bullet A)\circ A\circ J \ar[u]^{A\circ\zeta}
&&&& J\circ J\ar[d]^-{\varpi} \\
A\circ A\circ A\circ J \ar[u]^-{A\circ\Delta\circ A\circ J}
\ar[r]_-{\mu \circ A \circ J}
& A\circ A\circ J\ar[r]_-{\mu\circ J}
& A\circ J\ar[r]_-{\varepsilon\circ J}
& J\circ J \ar[r]_-{\varpi}
& J}
$$
We obtain the following picture of the actions on these diagrams.
$$
\xymatrix{
& (\LRU) \ar@{<->}[rr]^-{\circ} \ar@{<-}[d] 
&& (\LLU)\ar@{<->}[dd]^-{\ast} \\ 
(\RRU)\ar@{<->}[rr]^(.7){\circ}
\ar@{<->}[dd]_-{\ast}\ar@{<->}[ru]^-{\bullet} 
& \ar@{->}[d]^-{\ast} 
& (\RLU) \ar@{<->}[dd]^(.3){\ast}\ar@{<->}[ru]^-{\bullet}\\ 
& (\RLC) \ar@{<-}[r] 
& \ar@{->}[r]^-{\bullet} 
& (\RRC) \\ 
(\LLC)\ar@{<->}[rr]_-{\bullet}\ar@{<->}[ru]_-{\circ} 
&& (\LRC)\ar@{<->}[ru]_-{\circ} 
}
$$

In the next two propositions we present two large classes of examples of right
weak bimonoids in duoidal categories.

\begin{proposition}
Any bimonoid in a duoidal category satisfies the right weak bimonoid
axioms in Definition \ref{Def:weak bimonoid}. That is, right weak
bimonoid provides a generalization of bimonoid in a duoidal category.
\end{proposition}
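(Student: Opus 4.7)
The plan is to exploit the strong unit/counit compatibility axioms of a bimonoid that are precisely what get relaxed in the weak case, namely $\Delta.\eta=(\eta\bullet\eta).\delta$ (comultiplicativity of the unit) and $\varepsilon.\mu=\varpi.(\varepsilon\circ\varepsilon)$ (multiplicativity of the counit). These identities let us replace each occurrence of $\Delta.\eta$ and $\varepsilon.\mu$ appearing in the weak axioms by an expression built from $\delta, \eta$ and $\varpi, \varepsilon$ respectively, after which the remaining $\delta$'s, $\varpi$'s, $\eta$'s and $\varepsilon$'s in the diagrams can be absorbed by coherence.

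Concretely, the first step is to verify $(\RRU)$. Substituting $\Delta.\eta=(\eta\bullet\eta).\delta$ at both of its occurrences (once in the right column and once in the top row), coassociativity of $\delta$ --- i.e.\ the fact that $(I,\delta,\tau)$ is a comonoid in $(\mathcal M,\bullet,J)$ --- rearranges the resulting $\delta$'s. Naturality of $\zeta$ with respect to the $\eta\bullet\eta$ factors, combined with unitality of the monoid $(A,\mu,\eta)$ in $(\mathcal M,\circ)$ (that is, $\mu.(\eta\circ A)=A=\mu.(A\circ\eta)$), then collapses the composite $\zeta$ followed by $\mu$ into the identity on the relevant $A$-factor. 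The remaining unit book\-keeping is handled by the coherence diagram \eqref{eq:coh} between $\zeta$ and the arrows $\tau,\varpi$, and by counitality of $\Delta$. After these reductions both paths of the diagram equal the simple bottom path $I\to I\bullet I\to I\bullet A\to I\bullet A\bullet A\to I\bullet A\bullet A\bullet A$.

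The other three axioms then follow for free from the symmetry and duality arguments given in the paper after Definition \ref{Def:weak bimonoid}: $(\LRU)$ is the image of $(\RRU)$ under the transformation $\bullet$; $(\LLC)$ is the image of $(\RRU)$ under $\ast$; and $(\LRC)$ is the image of $(\LRU)$ under $\ast$ (equivalently, of $(\LLC)$ under $\bullet$). Since the bimonoid axioms themselves are invariant under each of the transformations $\bullet$ and $\ast$ --- with, in the latter case, $\mu\leftrightarrow\Delta$, $\eta\leftrightarrow\varepsilon$, $I\leftrightarrow J$, $\delta\leftrightarrow\varpi$ --- the argument for $(\RRU)$ transports verbatim, now using $\varepsilon.\mu=\varpi.(\varepsilon\circ\varepsilon)$ in place of $\Delta.\eta=(\eta\bullet\eta).\delta$ for the two $\ast$-dual axioms.

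The main obstacle is simply the bookkeeping in the single diagram chase for $(\RRU)$: no new idea enters, but the diagram is large, involves $\zeta$ applied to tensor factors of unequal $\bullet$-arity (so one has to regard $I\bullet A\bullet A$ as $I\bullet(A\bullet A)$ in order to apply the interchange), and careful use of \eqref{eq:coh} together with coassociativity and counitality of $\delta$ is required to reduce the long path to the base path. Once this one chase is completed, the proposition follows by the symmetry/duality transport.
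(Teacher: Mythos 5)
Your proposal is correct and follows essentially the paper's own route: a direct diagram chase establishing $(\RRU)$ from $\Delta.\eta=(\eta\bullet\eta).\delta$ together with naturality of $\zeta$, unitality of $\mu$, the duoidal unitality/coassociativity data for $\delta$, and then transport of the remaining three axioms using the invariance of the bimonoid axioms under the transformations $\circ$, $\bullet$, $\ast$. Two harmless slips only: the $\ast$-image of $(\LRU)$ is $(\RLC)$, not $(\LRC)$ (your alternative route $(\LRC)=\bullet(\LLC)=\bullet\ast(\RRU)$ is the correct one and suffices), and the unit bookkeeping in the chase is carried in the paper by a unitality axiom in \eqref{eq1.2} (together with coassociativity of $\delta$) rather than by \eqref{eq:coh}.
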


\begin{proof}
If $A$ is a bimonoid, then the multiplication is comultiplicative by
assumption. Since $\Delta.\eta=(\eta\bullet \eta).\delta$, ($\RRU$) holds by
commutativity of
$$
\xymatrix{
I \bullet A \bullet A \ar[r]^-{\raisebox{5pt}{${}_{
((I \bullet A)\circ \delta^2)\bullet A}$}}&
((I \bullet A)\circ (I \bullet I \bullet I)) \bullet A
\ar[rr]^-{((I \bullet A)\circ (I \bullet \eta \bullet \eta)) \bullet A }&&
((I \bullet A)\circ (I \bullet A \bullet A)) \bullet A
\ar[d]^-{\zeta \bullet A}\\
I \bullet I \bullet I \ar[u]^-{I \bullet \eta \bullet \eta}
\ar[r]^-{\raisebox{5pt}{${}_{((I \bullet I)\circ \delta^2)\bullet I}$}}
\ar@/_1.5pc/[rr]_-{\delta \bullet I \bullet I}^-{(\ast)}&
((I \bullet I)\circ (I \bullet I \bullet I)) \bullet I
\ar[r]^-{\zeta \bullet I}&
I \bullet I \bullet I \bullet I
\ar[r]^-{I \bullet \eta \bullet (\eta \circ \eta) \bullet \eta}
\ar[rd]^-{I \bullet \eta \bullet \eta \bullet \eta}&
I \bullet A \bullet (A \circ A) \bullet A
\ar[d]^-{I \bullet A \bullet \mu \bullet A}\\
I\ar[rr]_-{\delta^3}\ar[u]^-{\delta^2}&&
I \bullet I \bullet I \bullet I
\ar[r]_-{I \bullet \eta \bullet \eta \bullet \eta}&
I \bullet A \bullet A \bullet A,}
$$
where, as usual, we denoted
$\delta^2=(I \bullet \delta).\delta=(\delta\bullet I).\delta$,
$\delta^3=(I \bullet \delta^2).\delta=(\delta^2 \bullet I).\delta$, and so
on. The top region commutes by functoriality of both monoidal products and
naturality of $\zeta$. The triangle at the bottom right commutes by the
unitality of $\mu$. The region marked by $(\ast)$ commutes by naturality
of $\zeta$ and a unitality axiom in \eqref{eq1.2}. The bottom region
commutes by the coassociativity of $\delta$. 

Since the bimonoid axioms are invariant under any of the transformations
$\circ$, $\bullet$ and $\ast$, all other axioms of right weak bimonoid in
Definition \ref{Def:weak bimonoid} hold by symmetry. 
\end{proof}

\begin{proposition}
Regard any braided monoidal category $(\mathcal M, \ox, I, \sigma)$ as a
duoidal category with coinciding monoidal structures $(\ox, I)$ and the
interchange law provided by the braiding $\sigma$. Then a weak bimonoid in the
sense of \cite{C. Pastro} is the same as a right weak bimonoid in
Definition \ref{Def:weak bimonoid}. In this way, right weak bimonoids in
duoidal categories provide a generalization also of weak bimonoids in braided
monoidal categories.
\end{proposition}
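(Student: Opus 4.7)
The plan is to unpack the five right weak bimonoid axioms in the special case where $\circ=\bullet=\ox$, $I=J$, and the interchange law $\zeta$ reduces to the braiding, and then to match the resulting diagrams to the axioms in \cite{C. Pastro}. I would begin by recording the simplifications that make the translation mechanical: under the identification of a braided monoidal category with a duoidal one, the coherence morphisms $\delta\colon I\to I\ox I$, $\varpi\colon I\ox I\to I$ and $\tau\colon I\to I$ are forced to be the canonical isomorphisms (essentially identities), and by \cite[Section 6.3]{M. Aguiar} the interchange law satisfies $\zeta_{A,B,C,D}=A\ox\sigma_{B,C}\ox D$. Consequently every occurrence of $\zeta$ in the axioms becomes a single instance of the braiding $\sigma$, and every occurrence of $\delta$, $\varpi$ or $\tau$ can be canceled.

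Next I would treat the five axioms one by one. The axiom \eqref{eq:WB} coincides, after the substitution $\zeta=\sigma$, with the standard bialgebra-type compatibility between $\mu$ and $\Delta$ that appears identically in \cite{C. Pastro}. For the weak unit axioms $(\RRU)$ and $(\LRU)$, I would use that the ambient copies of $I$ (namely the leftmost and rightmost tensor factors) can be removed by the appropriate unit isomorphisms, together with the commutativity of \eqref{eq:coh}; what remains is an equality of morphisms $I\to A\ox A\ox A$ expressing the two weak comultiplicativity-of-unit conditions of \cite{C. Pastro}, differing by the position of a braiding $\sigma$ or $\sigma^{-1}$. A parallel argument reduces $(\LRC)$ and $(\LLC)$ to the two weak multiplicativity-of-counit conditions, using that the factors of $J$ now act trivially and that $\varpi$ is invertible.

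The key step in each case is identifying which instance of the braiding $\sigma$ is produced by $\zeta$. Here I would read the position of the single braid crossing directly off the formula $\zeta_{A,B,C,D}=A\ox\sigma_{B,C}\ox D$: for example in $(\RRU)$, the morphism $\zeta\bullet A$ becomes $I\ox\sigma_{A,I\ox A\ox A}\ox A$ which, after absorbing the trivial $I$ factors, is the braiding of $A$ past the second $A\ox A$. This matches precisely the braid placement in Pastro's weak bialgebra axioms; symmetrically $(\LRU)$ supplies the other variant with the opposite braid. The symmetry pattern established after Definition~\ref{Def:weak bimonoid} (the transformations $\circ$, $\bullet$, $\ast$) can be used to confirm that $(\LRC)$ and $(\LLC)$ dualize correctly to match Pastro's dual axioms.

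The main obstacle I anticipate is a purely bookkeeping one: verifying that the single braiding generated by each $\zeta$ in the duoidal axioms is in exactly the same location as the braiding in Pastro's axioms, and that no spurious powers of $\sigma$ are introduced by the unit isomorphisms. I would handle this by drawing each diagram in the braided case, erasing all tensor factors of $I$, and replacing $\zeta$ by the relevant braiding, at which point comparison with \cite{C. Pastro} becomes direct. Once the five axioms $(WB)$, $(\RRU)$, $(\LRU)$, $(\LRC)$, $(\LLC)$ have been matched to Pastro's axioms, the converse direction follows by reading the same diagram chase backwards, giving the claimed identification of the two notions.
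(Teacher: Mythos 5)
Your overall route is the one the paper itself takes: specialize the five axioms of Definition \ref{Def:weak bimonoid} to $\circ=\bullet=\ox$, suppress the now trivial morphisms $\delta$, $\varpi$, $\tau$, read each $\zeta$ as one instance of the braiding, compare axiom by axiom with \cite{C. Pastro}, and settle the remaining cases by the $\circ$, $\bullet$, $\ast$ symmetries. However, the two points at which you declare the comparison ``direct'' are precisely where the work lies, and your worked example already goes wrong there. In $(\RRU)$ the codomain $I\bullet A\bullet(A\circ A)\bullet A$ of $\zeta\bullet A$ forces the parsing $(I\bullet A)\circ\bigl((I\bullet A)\bullet A\bigr)$ of its domain, so in the braided case $\zeta$ specializes (up to unit constraints) to $\sigma_{A,A}\ox A\ox A$: the first output of the outer $\Delta.\eta$ is braided past the \emph{first} output of the inner $\Delta.\eta$ only, and is afterwards multiplied with its \emph{second} output. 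Your reading $I\ox\sigma_{A,I\ox A\ox A}\ox A$, which braids it past the whole inner $A\ox A$, is not the morphism occurring in the diagram (it is not an instance of $\zeta_{A,B,C,D}$ with the required codomain), and carrying it through would turn $(\RRU)$ into a different condition.

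Moreover, even with the correct identification the specialized axiom does not coincide on the nose with the corresponding axiom of \cite{C. Pastro}. Specialized, $(\RRU)$ states $\Delta^2.\eta=(A\ox\mu\ox A).(\sigma_{A,A}\ox A\ox A).(A\ox\Delta\ox A).(A\ox\eta\ox A).\Delta.\eta$, whereas the weak-unit axiom (w.2) of \cite{C. Pastro}, in the form the paper compares with, reads $\Delta^2.\eta=(A\ox\mu\ox A).(A\ox\sigma^{-1}_{A,A}\ox A).(\Delta\ox A\ox A).(A\ox\Delta).(\eta\ox A).\eta$: the crossing sits between different tensor factors and has the opposite orientation. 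So ``drawing the diagrams and comparing'' does not finish the argument; one still has to prove these two equalities equivalent, and that is exactly the content of the paper's proof, which interpolates between the two composites by a diagram whose interior cells commute by functoriality of $\ox$, naturality of the braiding (sliding $\eta$ and $\Delta$ through crossings) and braid coherence, e.g. $\sigma^{-1}_{A,A\ox A}=(\sigma^{-1}_{A,A}\ox A).(A\ox\sigma^{-1}_{A,A})$. Once that chase is supplied for one unit axiom, your appeal to the symmetries does indeed dispose of the remaining three, as in the paper; without it, the central step of your proposed proof is missing.
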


\begin{proof}
The multiplication is comultiplicative by definition, for any weak bimonoid in
the sense of \cite{C. Pastro} and also in the sense of
Definition \ref{Def:weak bimonoid}.
Axiom ($\RRU$) is equivalent to commutativity of the exterior of the following
diagram; and axiom (w.2) in \cite{C. Pastro} --- expressing weak
comultiplicativity of the unit --- is equivalent to commutativity of the
bottom region in
$$
\xymatrix{
A \ox A\ar[rr]^-{A\ox \eta \ox A}\ar[rrd]^-{\eta \ox A \ox A}&&
A \ox A \ox A \ar[rr]^-{A\ox \Delta \ox A}&&
A\ox A \ox A\ox A\ar[d]^-{\sigma_{A,A} \ox A \ox A}\\
A\ar[u]^-\Delta\ar[r]_-{\eta \ox A}&
A \ox A\ar[r]_-{A\ox \Delta}&
A \ox A\ox A\ar[r]_-{\Delta \ox A\ox A}
\ar[u]_-{\sigma_{A,A}^{-1}\ox A}&
A \ox A\ox A\ox A\ar[r]_-{A \ox \sigma_{A,A}^{-1}\ox A}
\ar[ur]^(.28){\sigma^{-1} _{A,A\ox A}\ox A}&
A\ox A\ox A\ox A\ar[d]^-{A \ox \mu \ox A}\\
I\ar[u]^-\eta\ar[rr]_-\eta&&
A\ar[rr]_-{\Delta^2}&&
A\ox A\ox A.}
$$
So they are equivalent by commutativity of all regions at the top: The
leftmost region commutes by functoriality of the monoidal product and the
remaining regions commute by coherence and naturality of the braiding.

The remaining three axioms are shown to be pairwise equivalent symmetrically.
\end{proof}

As a most important justification of Definition \ref{Def:weak bimonoid}, right
weak bimonoids induce weak bimonads (in the sense of \cite{G. Bohm2011}). The
proof of this fact starts with the following.

\begin{lemma}\label{lem:kappa}
For a right weak bimonoid $A$, and any objects $X$ and $Y$ in a duoidal
category $(\mathcal M, \circ, \bullet)$, consider the morphism
$$
\kappa_{X,Y}: \xymatrix@C=10pt{
((J\circ A)\bullet X)\circ (A\bullet Y)\ar[r]^-\zeta&
(J\circ A \circ A)\bullet (X\circ Y)
\ar[rr]^-{\raisebox{7pt}{${}_{
(J\circ \varepsilon.\mu)\bullet (X\circ Y) }$}}&&
(J\circ J)\bullet (X\circ Y) \ar[r]^-{\raisebox{7pt}{${}_{
\varpi \bullet (X\circ Y)}$}}&
X\circ Y.}
$$
The following assertions hold.
\begin{itemize}
\item[{(1)}] $\kappa$ is natural both in $X$ and $Y.$
\item[{(2)}] The following diagram commutes, for any objects $X,Y,Z$ in
 $\mathcal M$. 
$$
\xymatrix@C=45pt{
((J\circ A)\bullet X)\circ (A\bullet Y\bullet Z)\ar[r]^-\zeta
\ar[d]_-{\kappa_{X,Y\bullet Z}}&
(J\circ A \circ (A\bullet Y))\bullet (X\circ Z)
\ar[d]^-{\kappa_{J,Y}\bullet (X\circ Z)}\\
X\circ (Y\bullet Z)\ar[r]_-\zeta&
(J\circ Y)\bullet (X\circ Z).}
$$
\item[{(3)}] The following diagram commutes, for any objects $X,Y$ in
 $\mathcal M$. 
$$
\xymatrix@C=45pt{
((J\circ A)\bullet X)\circ (A\bullet Y)\ar[d]_-{\kappa_{X,Y}}
\ar[r]^-{\raisebox{6pt}{${}_{
((J\circ A\circ \Delta.\eta)\bullet X)\circ (A\bullet Y)}$}}&
((J\circ A\circ (A\bullet A))\bullet X)\circ (A\bullet Y)
\ar[d]^-{\raisebox{-7pt}{${}_{(\kappa_{J,A}\bullet X)\circ (A\bullet Y)}$}}\\
X\circ Y&
((J\circ A)\bullet X)\circ (A\bullet Y).
\ar[l]^-{\kappa_{X,Y}}}
$$
\end{itemize}
\end{lemma}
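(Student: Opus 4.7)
For part (1), $\kappa_{X,Y}$ is a composite of three morphisms, each natural in $X$ and $Y$: the interchange law $\zeta$ is natural in all four arguments by the definition of a duoidal category; and both $(J\circ\varepsilon.\mu)\bullet(X\circ Y)$ and $\varpi\bullet(X\circ Y)$ are natural in $X$ and $Y$ by bifunctoriality of $\circ$ and $\bullet$. Naturality of $\kappa$ is thus immediate.

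For part (2), I would expand both composites as strings of interchange laws and of $(-\bullet(\cdots))$ morphisms. Writing $X=J\bullet X$ and $(J\circ A)\bullet J=J\circ A$ (unitality of $\bullet$), the composite $\zeta\circ\kappa_{X,Y\bullet Z}$ first applies $\zeta:((J\circ A)\bullet X)\circ(A\bullet(Y\bullet Z))\to(J\circ A\circ A)\bullet(X\circ(Y\bullet Z))$, then collapses $J\circ A\circ A$ to $J$ via $(J\circ\varepsilon.\mu)\bullet(-)$ and $\varpi\bullet(-)$, and finally applies $\zeta:(J\bullet X)\circ(Y\bullet Z)\to(J\circ Y)\bullet(X\circ Z)$. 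The composite $(\kappa_{J,Y}\bullet(X\circ Z))\circ\zeta$ first applies $\zeta:((J\circ A)\bullet X)\circ((A\bullet Y)\bullet Z)\to(J\circ A\circ(A\bullet Y))\bullet(X\circ Z)$, then $\zeta\bullet(X\circ Z)$ to reach $((J\circ A\circ A)\bullet(J\circ Y))\bullet(X\circ Z)$, and then collapses the first bullet-factor. The crucial point is that these two iterated interchange laws produce the same morphism $((J\circ A)\bullet X)\circ(A\bullet Y\bullet Z)\to(J\circ A\circ A)\bullet(J\circ Y)\bullet(X\circ Z)$; this is exactly the second associativity diagram in \eqref{eq1.1} applied with $A=J\circ A$, $B=J$, $C=X$, $D=A$, $E=Y$, $F=Z$. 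From this common intermediate object, bifunctoriality of $\bullet$ lets the $\zeta$ acting on the second bullet-factor commute past the $(J\circ\varepsilon.\mu)$ and $\varpi$ morphisms acting on the first bullet-factor, so the two composites agree.

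For part (3), I would use naturality of $\zeta$ and bifunctoriality of $\bullet$ to strip off the outer $\kappa_{X,Y}$ and the common tensor factor $\bullet(X\circ Y)$, reducing the claim to the equality
\begin{equation*}
\varpi\circ(J\circ\varepsilon.\mu)\circ\bigl((\kappa_{J,A}\circ(J\circ A\circ\Delta.\eta))\circ A\bigr)\;=\;\varpi\circ(J\circ\varepsilon.\mu)
\end{equation*}
of morphisms $J\circ A\circ A\to J$. Now factor $\Delta.\eta=\Delta\circ\eta$ and, using unitality of $\circ$ in the form $J\circ A\circ A=J\circ A\circ I\circ A$, split
\begin{equation*}
(J\circ A\circ\Delta.\eta)\circ A=(J\circ A\circ\Delta\circ A)\circ(J\circ A\circ\eta\circ A).
\end{equation*}
Applying axiom $(\LRC)$ to the $J\circ A\circ\Delta\circ A$ piece rewrites the left-hand side as $\varpi\circ(J\circ\varepsilon)\circ(J\circ\mu)\circ(J\circ A\circ\mu)\circ(J\circ A\circ\eta\circ A)$; unitality of $(A,\mu,\eta)$ gives $\mu\circ(\eta\circ A)=\id_A$, so this collapses to $\varpi\circ(J\circ\varepsilon)\circ(J\circ\mu)=\varpi\circ(J\circ\varepsilon.\mu)$, matching the right-hand side.

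The main obstacle is the careful bookkeeping of the unitality identifications (such as $J\bullet X=X$, $(J\circ A)\bullet J=J\circ A$, $I\circ A=A$) and of the repeated interchange laws in part (2); once the two iterated $\zeta$'s are recognized as a single instance of the associativity diagram \eqref{eq1.1}, the remaining steps are formal naturality arguments. In part (3), the only substantive weak-bimonoid input is axiom $(\LRC)$; everything else follows from unitality and bifunctoriality.
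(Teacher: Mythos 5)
Your proposal is correct, and for parts (1) and (2) it is essentially the paper's own argument: (1) is pure naturality/functoriality, and (2) is exactly the second associativity hexagon of \eqref{eq1.1} (your substitution, padding the left factor as $(J\circ A)\bullet J\bullet X$, is the right instance) combined with bifunctoriality of $\bullet$ to slide the inner $\zeta$ past the collapse $\varpi.(J\circ\varepsilon.\mu)$ acting on the first $\bullet$-factor. For part (3) your route is leaner than the paper's: the paper proves it by a full-page pasting diagram in the Appendix, whose only weak-bimonoid input is the region obtained by precomposing $(\LRC)$ with $\tau\circ A\circ\eta\circ A$, the remaining regions commuting by naturality, associativity of $\varpi$ and the duoidal axioms \eqref{eq1.1}--\eqref{eq1.2}; you instead use naturality of $\zeta$ in its first argument to strip the outer $\zeta$ and the $\bullet(X\circ Y)$ whisker, reducing the statement to $\varpi.(J\circ\varepsilon.\mu).\bigl((\kappa_{J,A}.(J\circ A\circ\Delta.\eta))\circ A\bigr)=\varpi.(J\circ\varepsilon.\mu)$, which is precisely the upper path of $(\LRC)$ precomposed with $J\circ A\circ\eta\circ A$ together with left unitality $\mu.(\eta\circ A)=\id_A$. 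I checked this reduction and the subsequent computation; they are valid, and like the paper you implicitly use the suppressed unitor identifications (e.g.\ $(J\circ A)\bullet J\cong J\circ A$, $J\bullet X\cong X$) needed to view $\kappa_{J,A}$ as a morphism out of $J\circ A\circ(A\bullet A)$ and to match its $\zeta$ with the one in $(\LRC)$ --- this is consistent with the paper's stated conventions. Two cosmetic remarks: you use $\circ$ both for the monoidal product and for composition (the paper reserves ``.'' for composition), which should be cleaned up, and your reduction in (3) establishes sufficiency of the displayed identity, which is all that is needed.
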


\begin{proof}
(1) is evident by naturality of $\zeta$ and functoriality of both
 monoidal products.

(2) follows easily by one of the associativity conditions in \eqref{eq1.1} and
 functoriality of $\bullet$.

(3) is proven in the Appendix, on page
\pageref{page_lem:kappa}. Commutativity of the region labelled by $(\LRC)$
in the diagram on page \pageref{page_lem:kappa} follows by precomposing both
equal paths in $(\LRC)$ with $\tau\circ A\circ \eta\circ A$. The undecorated
regions commute by naturality and the associativity of $\varpi$.
\end{proof}

\begin{theorem} \label{thm:wbm}
Any right weak bimonoid $A$ in a duoidal category
$(\mathcal{M},\circ,\bullet)$ in which idempotent morphisms split, induces a
weak bimonad $(-)\circ A$ on $(\mathcal{M},\bullet)$. 
\end{theorem}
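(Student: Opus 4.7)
The plan is to equip the endofunctor $T:=(-)\circ A$ on $\mathcal M$ with both a monad structure and an opmonoidal structure with respect to $\bullet$, and then to invoke the characterization in \cite[Theorem 1.5]{G. Bohm2011} of weak bimonads on a monoidal category with split idempotents in terms of five compatibility axioms between these two structures. The monad structure comes for free from the monoid structure of $A$ in $(\mathcal M,\circ)$: I would set $\mu^T_X:=X\circ\mu$ and $\eta^T_X:=X\circ\eta$ (together with the right unitor). For the opmonoidal structure with respect to $\bullet$, I would use the comonoid structure of $A$ together with the interchange law, setting
$$
(T_2)_{X,Y}:=\zeta_{X,Y,A,A}.\bigl((X\bullet Y)\circ\Delta\bigr):(X\bullet Y)\circ A\longrightarrow(X\circ A)\bullet(Y\circ A)
$$
and $T_0:=\varpi.(J\circ\varepsilon):J\circ A\to J\circ J\to J$.

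First I would check that $(T_2,T_0)$ is indeed opmonoidal. Coassociativity of $T_2$ reduces to the associativity diagrams \eqref{eq1.1} for $\zeta$, combined with coassociativity of $\Delta$ and naturality of $\zeta$; counitality of $T_2$ with respect to $T_0$ reduces to the unitality diagrams \eqref{eq1.2} (used in the form \eqref{eq:coh}) together with counitality of $\Delta$. None of the weak bimonoid axioms enters here; the opmonoidal structure exists already from the underlying duoidal and comonoid data.

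The content then lies in verifying the five compatibility axioms of \cite[Theorem 1.5]{G. Bohm2011}. The design of the right weak bimonoid axioms makes the matching transparent: equation \eqref{eq:WB}, which says that $\mu$ is $\bullet$-comultiplicative, gives precisely the compatibility between $T_2$ and $\mu^T$, after applying the first associativity diagram of \eqref{eq1.1} to commute $\zeta$ past itself. The four remaining compatibility axioms --- two expressing a weak comultiplicativity of $\eta^T$ and two expressing a weak multiplicativity of $T_0$ --- correspond one-for-one to the four axioms $(\RRU)$, $(\LRU)$, $(\LLC)$ and $(\LRC)$ of Definition \ref{Def:weak bimonoid}. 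For the two axioms involving $T_0.\mu^T$, Lemma \ref{lem:kappa} is the bridge: the morphism $\kappa_{X,Y}$ realizes the composite $(J\circ\varepsilon.\mu)\bullet(X\circ Y)$ after $\zeta$, and its properties (2) and (3) package $(\LRC)$ in exactly the form needed to collapse the relevant $T^2\bullet T^2$-diagrams against $T_0$. The corresponding $(\RLC)$-type statement is obtained by the symmetric application of the duality arguments already recorded after Definition \ref{Def:weak bimonoid}.

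The main obstacle is the combinatorial translation between the two axiomatic formats: a single weak bimonad axiom, once written out, is a large diagram involving two copies of $T_2$, one copy of $\mu^T$ (or $T_0$), and one copy of $\eta^T$, while a weak bimonoid axiom is a diagram of comparable size in $\mathcal M$ involving $\zeta$, $\Delta$, $\mu$, $\eta$ (or $\varepsilon$, $\varpi$). Matching them requires a careful use of naturality of $\zeta$, the associativity conditions \eqref{eq1.1}, and the unitality conditions \eqref{eq1.2} --- but no new idea beyond Lemma \ref{lem:kappa}, so that once the dictionary is set up the verification reduces to bookkeeping and to the four stated axioms.
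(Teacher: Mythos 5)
Your plan is essentially the paper's own proof: the same monad and opmonoidal structures on $(-)\circ A$, the same reduction to the five axioms of \cite[Theorem 1.5]{G. Bohm2011}, with \eqref{eq:WB} yielding \cite[eq.\ (1.7)]{G. Bohm2011} exactly as in the non-weak case, $(\RRU)$ yielding \cite[eq.\ (1.5)]{G. Bohm2011}, Lemma \ref{lem:kappa} entering for \cite[eq.\ (1.4)]{G. Bohm2011}, and the remaining two axioms by duality. Two bookkeeping points need correction, however. First, the matching is not one-for-one: the verification of \cite[eq.\ (1.4)]{G. Bohm2011} consumes both a unit axiom and a counit axiom --- in the paper it uses $(\LRU)$ together with Lemma \ref{lem:kappa}(1)--(3), the latter encoding $(\LRC)$ --- while the two ``(1.5)-type'' axioms use $(\RRU)$ and $(\LRU)$. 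Second, and more seriously, the symmetric counterparts of (1.4) and (1.5) are obtained by reversing $\bullet$ (the monoidal product on which $(-)\circ A$ is a weak bimonad), not $\circ$; under this duality $(\RRU)\leftrightarrow(\LRU)$ and $(\LRC)\leftrightarrow(\LLC)$, so the partner of the $(\LRC)$-based step is $(\LLC)$-based. Your appeal to ``the corresponding $(\RLC)$-type statement'' would, read literally, invoke a left weak bimonoid axiom that is not assumed here (and the $\circ$-reversal producing it would moreover change the functor to $A\circ(-)$). Since $(\LLC)$ is among the assumed axioms and the set $\{(\RRU),(\LRU),(\LLC),(\LRC)\}$ is closed under the $\bullet$-reversal, the symmetry argument does go through once this label is fixed.
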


\begin{proof}
The multiplication and the unit of the monad are induced
by the multiplication and the unit of the monoid $A$, respectively.
The binary part of the opmonoidal structure is given by
$$
\xymatrix{
(M\bullet M')\circ A \ar[rr]^-{(M\bullet M')\circ\Delta}
&& (M\bullet M')\circ(A\bullet A)\ar[r]^-{\zeta}
& (M\circ A)\bullet (M'\circ A),
}
$$
for all objects $M,M'$ in $\mathcal M$. The nullary part is provided by
$\xymatrix@C=15pt{
J\circ A \ar[r]^-{J\circ\varepsilon}
& J\circ J \ar[r]^-{\varpi}
& J. }$
This equips $(-)\circ A$ with the structure of an opmonoidal functor indeed:
coassociativity follows from the coassociativity of $\Delta$,
the associativity axioms \eqref{eq1.1} in a duoidal category and naturality of
$\zeta$; and counitality follows by the counitality of $\Delta$, the
unitality axioms \eqref{eq1.2} in a duoidal category and naturality of
$\zeta$ again. (The proof of this is identical to the non-weak case.)

We use the equivalent conditions in \cite[Theorem 1.5]{G. Bohm2011}
to show that $(-)\circ A$ is a weak bimonad.
First, \cite[eq. (1.7)]{G. Bohm2011} holds true by one of the
associativity axioms of a duoidal category in \eqref{eq1.1}, the
compatibility condition \eqref{eq:WB}, naturality of $\zeta$ and
functoriality of $\circ $ (this proof is identical to the non-weak
case). Verification of \cite[eq. (1.5)]{G. Bohm2011}, for any objects $X$, $Y$
and $Z$ of $\mathcal M$, can be found in the Appendix, on page
\pageref{page_1.5}. Commutativity of the region labelled by $(\RRU)$ in the
diagram on page \pageref{page_1.5}, follows by postcomposing both sides of
axiom $(\RRU)$ with $\tau \bullet A \bullet A \bullet A$. The undecorated
regions commute by naturality.
Finally, \cite[eq. (1.4)]{G. Bohm2011} is proven to hold, for any object
$X$ in $\mathcal M$, in the Appendix, on page \pageref{page_1.4}. The
proof makes use of Lemma \ref{lem:kappa}. Commutativity of the region labelled
by $(\LRU)$ in the diagram on page \pageref{page_1.4}, follows by
postcomposing both sides of $(\LRU)$ with $A \bullet A \bullet A\bullet
\tau$. The vertical arrow on the right-hand-side of the diagram on page
\pageref{page_1.4} is equal to the identity morphism by one of the unitality
axioms in \eqref{eq1.2}. 
The remaining diagrams in \cite[Theorem 1.5]{G. Bohm2011} can be obtained from
\cite[eqs. (1.4) and (1.5)]{G. Bohm2011} replacing $\bullet$ by its
opposite. Hence they hold true by symmetry.
\end{proof}

The transformations $\circ$, $\bullet$ and $\ast$ relate also the
functors induced by any object $A$, as depicted in the diagram
\begin{equation}\label{eq:induced_wbm}
\xymatrix@C=1pt{
\ar@(ul,dl)@{<->}[]_-{\bullet}&
(-)\circ A \ar@{<->}[rrrrrrrr]^-{\ast}\ar@{<->}[d]_-{\circ}&&&&&&&& 
(-)\bullet A&
\ar@(ur,dr)@{<->}[]^-{\circ}
\\
\ar@(dl,ul)@{<->}[]^-{\bullet}&
A\circ(-)\ar@{<->}[rrrrrrrr]^-{\ast}&&&&&&&& 
A\bullet(-).\ar@{<->}[u]_-{\bullet}&
\ar@(ur,dr)@{<->}[]^-{\circ}
}\end{equation}
Hence by symmetry considerations, from Theorem \ref{thm:wbm} we
obtain the following. (By a {\em weak bicomonad} below, we mean a weak
bimonad on the opposite category.)

\begin{corollary} \label{cor:wbms}
Let $\mathcal M$ be a duoidal category with monoidal products $\circ$ and
$\bullet$, such that idempotent morphisms in $\mathcal M$ split. Let $A$ be
an object of $\mathcal M$ which carries the structures of a monoid wrt $\circ$
and a comonoid wrt $\bullet$. Assume that \eqref{eq:WB} holds true.
\begin{itemize}
\item[{(1)}] If $(\RLU)$, $(\LLU)$, $(\RLC)$, and $(\RRC)$
hold, then $A\circ(-)$ is a weak bimonad on $(\mathcal M, \bullet)$.
\item[{(2)}] If $(\LLC)$, $(\RLC)$, $(\RRU)$ and $(\RLU)$
hold, then $(-)\bullet A$ is a weak bicomonad on $(\mathcal M,\circ)$.
\item[{(3)}] If $(\LRC)$, $(\RRC)$, $(\LRU)$ and $(\LLU)$
hold, then $A\bullet(-)$ is a weak bicomonad on $(\mathcal M,\circ)$.
\end{itemize}
\end{corollary}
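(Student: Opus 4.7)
The plan is to deduce each of the three claims from Theorem \ref{thm:wbm} by transporting the theorem along one of the symmetries of the duoidal structure: the transformation $\circ$ for part~(1), the transformation $\ast$ for part~(2), and the composite $\bullet\ast$ for part~(3). In all three cases the transformed duoidal category is either $\mathcal M$ itself or $\mathcal M^{\mathrm{op}}$ with appropriately swapped or reversed monoidal structures, so the hypothesis that idempotent morphisms split is preserved; and axiom \eqref{eq:WB}, being stable under each of the transformations $\circ$, $\bullet$ and $\ast$, continues to hold without change.

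For part~(1), I would apply Theorem \ref{thm:wbm} in the duoidal category $(\mathcal M, \circ^{\mathrm{op}}, \bullet)$, in which $A$ carries the same underlying monoid and comonoid data. Using the action table on the eight axioms described just above, the hypotheses $(\RRU), (\LRU), (\LLC), (\LRC)$ required of a right weak bimonoid in this new duoidal category correspond precisely to $(\RLU), (\LLU), (\RLC), (\RRC)$ in the original $(\mathcal M, \circ, \bullet)$. The conclusion of Theorem \ref{thm:wbm} then says that $(-)\circ^{\mathrm{op}} A$ is a weak bimonad on $(\mathcal M, \bullet)$, and this functor is exactly $A\circ(-)$.

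For part~(2), I would apply Theorem \ref{thm:wbm} to $(\mathcal M^{\mathrm{op}}, \bullet, \circ)$: the given comonoid structure of $A$ with respect to $\bullet$ in $\mathcal M$ serves as a monoid structure with respect to $\bullet$ in $\mathcal M^{\mathrm{op}}$, and dually the original multiplication on $A$ yields a $\circ$-comultiplication in $\mathcal M^{\mathrm{op}}$. Tracing the $\ast$-images of the axioms one reads off the list $(\LLC), (\RLC), (\RRU), (\RLU)$ in $(\mathcal M,\circ,\bullet)$. Since weak bimonads on $\mathcal M^{\mathrm{op}}$ are, by the convention recalled in the statement, weak bicomonads on $\mathcal M$, the resulting functor $(-)\bullet A$ is a weak bicomonad on $(\mathcal M, \circ)$. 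Part~(3) is entirely analogous, applied in $(\mathcal M^{\mathrm{op}}, \bullet^{\mathrm{op}}, \circ)$; the required axioms become $(\LRC), (\RRC), (\LRU), (\LLU)$, and the resulting weak bicomonad on $(\mathcal M, \circ)$ is $A\bullet(-) = (-)\bullet^{\mathrm{op}} A$.

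No fresh diagram chasing is required beyond what is already done in Theorem \ref{thm:wbm} and the supporting Lemma \ref{lem:kappa}; the only bookkeeping consists in matching the eight weak bimonoid axioms against their images under the three transformations, which is exactly what the action diagram displayed just above the corollary encodes. I expect this matching to be the sole place where a slip could occur, but it is entirely routine once the action diagram is taken at face value.
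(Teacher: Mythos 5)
Your proposal is correct and follows essentially the same route as the paper, which deduces the corollary from Theorem \ref{thm:wbm} purely ``by symmetry considerations'', using the action of the transformations $\circ$, $\bullet$, $\ast$ on the eight axioms and on the induced functors as recorded in \eqref{eq:induced_wbm}. Your explicit matching of the transformed axiom lists ($(\RLU),(\LLU),(\RLC),(\RRC)$ for $\circ$; $(\LLC),(\RLC),(\RRU),(\RLU)$ for $\ast$; $(\LRC),(\RRC),(\LRU),(\LLU)$ for the composite) agrees with the paper's action diagram, so no gap remains.
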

This motivates the following.

\begin{definition}
A {\em left weak bimonoid} in a duoidal category $(\mathcal M,\circ, \bullet)$
is an object $A$ which carries the structures of a monoid wrt $\circ$ and a
comonoid wrt $\bullet$ such that \eqref{eq:WB} and the conditions $(\RLU)$,
$(\LLU)$, $(\RLC)$, and $(\RRC)$ hold. We say that $A$ is a {\em weak
bimonoid} if it is both a right weak bimonoid and a left weak bimonoid.
\end{definition}

In a braided monoidal category --- regarded as a duoidal category --- all
notions of right weak bimonoid, left weak bimonoid and weak bimonoid
coincide. Bimonoids in duoidal categories also provide examples
of both right weak bimonoids and left weak bimonoids (hence of weak bimonoids).

\section{The monoidal category of modules}\label{sec:modcat}

In light of Theorem \ref{thm:wbm}, we can apply all the information about weak
bimonads in Section \ref{sec:wbm} to the weak bimonad $(-)\circ A$ on
$(\mathcal M,\bullet)$, induced by a right weak bimonoid $A$ in a
duoidal category $(\mathcal M,\circ,\bullet)$ in which idempotent morphisms
split.

The monoidal product of any $(-)\circ A$-modules $(F,\varphi)$ and
$(G,\gamma)$ is given by splitting the idempotent morphism
\eqref{eq:chi_gen}, that is,
\begin{equation}\label{chi}
\xymatrix@C=15pt{
\chi^R_{F,G}:
F\bullet G\ar[rr]^-{(F\bullet G)\circ\Delta.\eta}
&& (F\bullet G)\circ (A\bullet A)\ar[r]^-{\zeta}
& (F\circ A)\bullet(G\circ A)\ar[r]^-{\varphi\bullet\gamma}
& F\bullet G.}
\end{equation}
The monoidal unit of the category of $(-)\circ A$-modules --- to
be denoted by $R_\circ$ --- is obtained by splitting the idempotent morphism
\eqref{sqcap1}, taking now the explicit form
\begin{equation}\label{eq:sqcap^R}
\xymatrix@C=8pt{
\sqcap^{R}_\circ:
J\circ A\ar[rr]^-{\raisebox{7pt}{${}_{
J\circ A\circ \Delta.\eta}$}}&&
J\circ A\circ (A\bullet A)\ar[r]^-{\raisebox{7pt}{${}_{\zeta}$}}&
(J\circ A)\bullet(J\circ A\circ A)
\ar[rr]^-{\raisebox{7pt}{${}_{
(J\circ A)\bullet(J\circ\varepsilon.\mu)}$}} &&
(J\circ A)\bullet(J\circ J)
\ar[rr]^-{\raisebox{7pt}{${}_{(J\circ A)\bullet\varpi}$}}&&
J\circ A.
}\end{equation}
By \eqref{resqcap}, the duoidal category axioms \eqref{eq1.1} and
\eqref{eq1.2}, by the counitality of $\Delta$, naturality of $\zeta$ and
functoriality of $\circ$, $\sqcap^{R}_\circ$ admits an
equal form
\begin{equation}\label{eq:sqcap_alt}
\xymatrix @R=8pt{
J\circ A
\ar[r]^-{J\circ A \circ \Delta.\eta}&
J\circ A\circ (A\bullet A)
\ar[r]^-{J\circ \zeta}&
J\circ ((J \circ A)\bullet(A\circ A))
\ar[r]^-{\raisebox{7pt}{${}_{J\circ ((J\circ A)\bullet \varepsilon.\mu)}$}}&
J\circ J\circ A\ar[r]^-{\varpi\circ A}&
J\circ A.}
\end{equation}
For the splittings of $\sqcap^{R}_\circ$ and $\chi^R_{F,G}$, the notations
$$\xymatrix {
J\circ A \ar[rr]^-{\sqcap^{R}_\circ}
\ar@{->>}[dr]_-{\pi^R_\circ}
&& J\circ A
&& F\bullet G \ar[rr]^-{\chi^R_{F,G}}\ar@{->>}[rd]_-{\pi^R_{F,G}}&&
F\bullet G
\\
& R_\circ \ar@{>->}[ru]_-{\iota^R_\circ} &
&&
& F \bullet_{R_\circ} G \ar@{>->}[ur]_-{\iota ^R_{F,G}}
}$$
will be used.
The retract $R_\circ$ is a separable Frobenius monoid in $(\mathcal
M,\bullet)$, with multiplication, unit, comultiplication and counit as below,
see \cite[eqs. (2.1)-(2.2)]{G. Bohm2011}.
\begin{equation}\label{eq:R_sepFrob}
\xymatrix @R=8pt @C=20pt{
\mu^R_\circ:& R_\circ\bullet R_\circ\ar[r]^-{\chi^R}
& R_\circ\bullet R_\circ\ar[r]^-{R_\circ\bullet\iota^R_\circ}
& R_\circ\bullet (J\circ A)\ar[r]^-{R_\circ\bullet (J\circ\varepsilon)}
& R_\circ\bullet (J\circ J)\ar[r]^-{R_\circ\bullet\varpi}
& R_\circ,
\\
\eta^R_\circ:& J\ar[r]^-{J\circ\eta}
& J\circ A\ar[r]^-{\pi^R_\circ}
& R_\circ,
\\
\Delta^R_\circ:& R_\circ\ar[r]^-{R_\circ\bullet(J\circ\eta)}
& R_\circ\bullet(J\circ A)\ar[r]^-{R_\circ\bullet\pi^R_\circ}
& R_\circ\bullet R_\circ\ar[r]^-{\chi^R}
& R_\circ\bullet R_\circ,
\\
\varepsilon^R_\circ:& R_\circ\ar[r]^-{\iota^R_\circ}
& J\circ A\ar[r]^-{J\circ\varepsilon}
& J\circ J\ar[r]^-{\varpi}
& J.
}
\end{equation}
By construction, $\pi^R_\circ:J\circ A \to R_\circ$ is a morphism of
comonoids, hence it induces (left and right) $R_\circ$-coactions on $J\circ
A$.

\begin{lemma}\label{lem:pi_colinear}
For any right weak bimonoid $A$ in a duoidal category $(\mathcal M, \circ,
\bullet)$ in which idempotent morphisms split, $\sqcap^R_\circ:J\circ A \to
J\circ A$ in \eqref{eq:sqcap^R} (and hence the epimorphism $\pi^R_\circ:J\circ
A \to R_\circ$ and the monomorphism $\iota^R_\circ:R_\circ\to J\circ A$ in its
splitting) are morphisms of right $R_\circ$-comodules.
\end{lemma}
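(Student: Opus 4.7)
My first step is to identify the right $R_\circ$-coaction on $J\circ A$ induced by the comonoid morphism $\pi^R_\circ$. Viewing $J\circ A$ as $TK$ for the weak bimonad $T=(-)\circ A$ on $(\mathcal M,\bullet)$ (Theorem \ref{thm:wbm}), its comultiplication is the binary opmonoidal coherence, which after identifying the unit $J\cong J\bullet J$ takes the form
$$T_2:=\zeta_{J,J,A,A}\circ(J\circ\Delta):J\circ A\longrightarrow (J\circ A)\bullet(J\circ A).$$
As recalled in Section \ref{sec:wbm}, $\pi^R_\circ$ is a morphism of comonoids, so the induced right $R_\circ$-coaction on $J\circ A$ is
$$\rho:=\bigl((J\circ A)\bullet\pi^R_\circ\bigr)\circ T_2.$$
Since $\sqcap^R_\circ=\iota^R_\circ\circ\pi^R_\circ$, it suffices to verify separately that $\pi^R_\circ$ and $\iota^R_\circ$ are right $R_\circ$-colinear; the colinearity of their composite $\sqcap^R_\circ$ then follows automatically. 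The colinearity of $\pi^R_\circ$ is immediate: being a morphism of comonoids, $(\pi^R_\circ\bullet\pi^R_\circ)\circ T_2=\Delta^{R_\circ}\circ\pi^R_\circ$, hence $(\pi^R_\circ\bullet R_\circ)\circ\rho=\Delta^{R_\circ}\circ\pi^R_\circ$.

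The substantive task is the colinearity of $\iota^R_\circ$, i.e.
$$\bigl((J\circ A)\bullet\pi^R_\circ\bigr)\circ T_2\circ\iota^R_\circ=(\iota^R_\circ\bullet R_\circ)\circ\Delta^{R_\circ}.\qquad(\dagger)$$
Using $\pi^R_\circ\circ\iota^R_\circ=\mathsf{id}$ and the comonoid identity for $\pi^R_\circ$ one rewrites $\Delta^{R_\circ}=(\pi^R_\circ\bullet\pi^R_\circ)\circ T_2\circ\iota^R_\circ$, so the right-hand side of $(\dagger)$ equals $(\sqcap^R_\circ\bullet\pi^R_\circ)\circ T_2\circ\iota^R_\circ$. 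Pre-composing $(\dagger)$ with the epimorphism $\pi^R_\circ$ turns $\iota^R_\circ$ into $\sqcap^R_\circ$ and yields the equivalent statement
$$\bigl((J\circ A)\bullet\pi^R_\circ\bigr)\circ T_2\circ\sqcap^R_\circ=\bigl(\sqcap^R_\circ\bullet\pi^R_\circ\bigr)\circ T_2\circ\sqcap^R_\circ.\qquad(\star)$$
My plan is to verify $(\star)$ by a direct diagram chase: expand $\sqcap^R_\circ$ using the alternative form \eqref{eq:sqcap_alt}, expand $T_2$ according to its definition, and rearrange the resulting composites by repeated use of naturality of $\zeta$, functoriality of $\circ$ and $\bullet$, coassociativity of $\Delta$, and the associativity and unitality axioms \eqref{eq1.1}--\eqref{eq1.2} of the duoidal category. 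At the critical step one invokes the weak bimonoid counit axiom $(\LLC)$ (pre-composed with $\tau:I\to J$ absorbed into a suitable tensor slot), which is exactly what is needed to commute an extra comultiplication past the $\varpi\circ(J\circ\varepsilon.\mu)\circ\zeta$-block sitting inside $\sqcap^R_\circ$; after this application both sides of $(\star)$ collapse to the same composite.

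\textbf{Main obstacle.} The principal difficulty is the combinatorial heft of the diagram chase, not any conceptual step. Once $\sqcap^R_\circ$, $T_2$ and $\Delta^{R_\circ}$ are all substituted by their explicit formulas, both sides of $(\star)$ become composites of seven or eight structure morphisms, and the challenge is to keep track of the tensor slots in which $\zeta$ acts, so as to locate the one spot where axiom $(\LLC)$ applies. All remaining manipulations are formal duoidal-coherence bookkeeping.
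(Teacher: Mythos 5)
Your reduction is sound as far as it goes: the coaction you write down is the one the paper works with, the colinearity of $\pi^R_\circ$ is indeed immediate from its comonoid-morphism property, and your identity $(\star)$ is --- via $\sqcap^R_\circ=\iota^R_\circ.\pi^R_\circ$, the splitting, and $\Delta^{R_\circ}.\pi^R_\circ=(\pi^R_\circ\bullet\pi^R_\circ).\zeta.(J\circ \Delta)$ --- equivalent to the diagram \eqref{eq:sqcap_colinear} with which the paper's own proof begins. So the reduction buys nothing: the entire content of the lemma is the verification of that single identity, and this is precisely the part you leave as a plan (``expand, rearrange, invoke $(\LLC)$ at the critical step''). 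In the paper this verification occupies the two page-size diagrams on pages \pageref{page_sqcap_colinear_1} and \pageref{page_sqcap_colinear_2}; nothing of that chase appears in your proposal.

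Moreover, the announced mechanism is not substantiated and is very unlikely to be the right one. The weak-bimonoid input in the paper's chase is not $(\LLC)$ but $(\RRU)$: it enters twice through the previously established diagram verifying the weak-bimonad axiom \cite[eq.\ (1.5)]{G. Bohm2011} (the diagram on page \pageref{page_1.5}, used once with $X=Y=J$, $Z=J\circ A$ and once with $X=J$, $Y=Z=A$), together with \eqref{eq:WB}, the two expressions \eqref{eq:sqcap^R} and \eqref{eq:sqcap_alt} of $\sqcap^R_\circ$, and the weak-bimonad identity \eqref{eq:wbm_1.10}. The motivating case of a weak bialgebra shows why some weak-unitality input is unavoidable: there $\sqcap^R_\circ(a)=1_{(1)}\varepsilon(a1_{(2)})$ and $(\star)$ reads $\sqcap^R_\circ(a)_{(1)}\ox\sqcap^R_\circ\bigl(\sqcap^R_\circ(a)_{(2)}\bigr)=\sqcap^R_\circ(a_{(1)})\ox\sqcap^R_\circ(a_{(2)})$. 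Its left-hand side involves $\Delta^2(1)$, and rewriting $\Delta^2(1)$ in terms of two separate copies of $\Delta(1)$ is exactly what $(\RRU)$ governs; the counit axioms $(\LLC)$, $(\LRC)$ only merge two $\varepsilon$-factors in which the two legs of one comultiplication sit on opposite sides, whereas on the right-hand side both legs of $\Delta(a)$ sit on the same side of their $\varepsilon$-factors, so $(\LLC)$ has no handle there. Hence a single appeal to $(\LLC)$ will not make the two sides of $(\star)$ collapse: to complete the proof you must either carry out the full duoidal chase with the ingredients listed above, or route the argument, as the paper does, through the weak-bimonad structure already secured in Theorem \ref{thm:wbm}.
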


\begin{proof}
The idempotent $\sqcap^R_\circ:J\circ A \to J\circ A$ is a morphism of right
$R_\circ$-comodules if and only if
\begin{equation}\label{eq:sqcap_colinear}
\xymatrix{
J\circ A \ar[r]^-{J\circ \Delta}\ar[d]_-{\sqcap^R_\circ}&
J\circ (A\bullet A)\ar[r]^-\zeta&
(J\circ A)\bullet(J\circ A)\ar[r]^-{(J\circ A)\bullet\sqcap^R_\circ}&
(J\circ A)\bullet(J\circ A)\ar[d]^-{\sqcap^R_\circ \bullet (J\circ A)}\\
J\circ A \ar[r]^-{J\circ \Delta}&
J\circ (A\bullet A)\ar[r]^-\zeta&
(J\circ A)\bullet(J\circ A)\ar[r]^-{(J\circ A)\bullet\sqcap^R_\circ}&
(J\circ A)\bullet(J\circ A)}
\end{equation}
commutes. Using expression \eqref{eq:sqcap^R} of $\sqcap^R_\circ$, the
coassociativity of $\Delta$, naturality of $\zeta$ and functoriality of
$\bullet$, the down-then-right path in \eqref{eq:sqcap_colinear} is
checked to be equal to 
$$
\xymatrix@R=8pt@C=10pt{
J\circ A \ar[r]^-{J\circ A\circ \Delta^2.\eta}&
J\circ A\circ (A\bullet A \bullet A)\ar[r]^-\zeta&
(J\circ (A\bullet A))\bullet (J\circ A\circ A)
\ar[r]^-{\zeta\bullet (J\circ A\circ A)}&\\
&(J\circ A)\bullet(J\circ A)\bullet(J\circ A\circ A)
\ar[r]^-{\raisebox{6pt}{
${}_{(J\circ A)\bullet(J\circ A)\bullet(J\circ\varepsilon.\mu)}$}}&
(J\circ A)\bullet(J\circ A)\bullet(J\circ J)
\ar[r]^-{\raisebox{6pt}{${}_{(J\circ A)\bullet\sqcap^R_\circ\bullet \varpi}$}}&
(J\circ A)\bullet(J\circ A).}
$$
Hence by commutativity of the diagram on page
\pageref{page_sqcap_colinear_1} in the Appendix, it is equal to
\begin{equation}\label{eq:auxi}
\xymatrix@C=10pt{
J\circ A \ar[r]^-{\raisebox{6pt}{${}_{J\circ A\circ \Delta.\eta}$}}&
J\circ A\circ (A\bullet A)\ar[r]^-\zeta&
(J\circ A)\bullet (J\circ A\circ A)
\ar[r]^-{\raisebox{6pt}{${}_{(J\circ A)\bullet (J\circ\mu)}$}}&
(J\circ A)\bullet (J\circ A)
\ar[r]^-{\raisebox{6pt}{${}_{(J\circ A)\bullet \sqcap^R_\circ}$}}&
(J\circ A)\bullet (J\circ A).}
\end{equation}
In the diagram on page \pageref{page_sqcap_colinear_1}, the region
labelled by $(\ast)$ is identical to the commutative diagram on page
\pageref{page_1.5} if we substitute in the latter one $X=Y=J$ and $Z=J\circ
A$.
On the other hand, also the right-then-down path in \eqref{eq:sqcap_colinear}
is equal to \eqref{eq:auxi} by commutativity of the diagram on page
\pageref{page_sqcap_colinear_2} in the Appendix. Commutativity of the region
marked by $(\ast)$ in the diagram on page \pageref{page_sqcap_colinear_2}
follows by commutativity of the diagram on page
\pageref{page_1.5} (substituting in it $X=J$ and $Y=Z=A$).
The vertical path on the right hand side of the diagram on page
\pageref{page_sqcap_colinear_2} is equal to $\sqcap^R_\circ \bullet(J\circ A)$
by \eqref{eq:sqcap_alt}.
This proves that $\sqcap^R_\circ$ is a morphism of right
$R_\circ$-comodules. Then so are its splitting mono- and epimorphisms with
respect to the coaction
$$
\xymatrix{
R_\circ \ar[r]^-{\iota^R_\circ}&
J\circ A \ar[r]^-{J\circ \Delta}&
J\circ (A\bullet A)\ar[r]^-\zeta&
(J\circ A)\bullet (J\circ A)\ar[r]^-{\pi^R_\circ\bullet\pi^R_\circ}&
R_\circ \bullet R_\circ\ =\ \Delta^R_\circ}
$$
on $R_\circ$, where the equality follows by using that $\pi^R_\circ$ is a
morphism of comonoids.
\end{proof}

\begin{lemma}\label{lem:comonoid_map}
For any right weak bimonoid $A$ in a duoidal category $(\mathcal M, \circ,
\bullet)$ in which idempotent morphisms split, there is a morphism of
comonoids $\xymatrix@C=15pt{A\ar[r]^-{\tau\circ A}& J\circ
A\ar[r]^-{\pi^R_\circ}& R_\circ .}$ 
\end{lemma}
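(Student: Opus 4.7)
I verify separately that $f:=\pi^R_\circ.(\tau\circ A):A\to R_\circ$ preserves the counit and the comultiplication of $R_\circ$.

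\textbf{Counit.} Since $\iota^R_\circ.\pi^R_\circ=\sqcap^R_\circ$, and $\varepsilon^R_\circ=\varpi.(J\circ\varepsilon).\iota^R_\circ$ by \eqref{eq:R_sepFrob}, we have $\varepsilon^R_\circ.\pi^R_\circ=\varpi.(J\circ\varepsilon).\sqcap^R_\circ$. By Theorem~\ref{thm:wbm} the functor $(-)\circ A$ is a weak bimonad with nullary opmonoidal structure $T_0=\varpi.(J\circ\varepsilon)$, so the second equality in \eqref{eq:T0_Pi} gives $T_0.\sqcap^R_\circ=T_0$. Precomposing with $\tau\circ A$, functoriality of $\circ$ and the left unitality of the monoid $(J,\varpi,\tau)$ in $(\mathcal M,\circ,I)$ yield
$$\varepsilon^R_\circ.f=\varpi.(J\circ\varepsilon).(\tau\circ A)=\varpi.(\tau\circ J).\varepsilon=\varepsilon.$$

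\textbf{Comultiplication.} Recall from Section~\ref{sec:wbm} that $\pi^R_\circ:TK=J\circ A\to R_\circ$ is a morphism of comonoids when $TK$ is equipped with the canonical comonoid structure induced by the opmonoidal functor $T=(-)\circ A$; explicitly, its comultiplication is the composite
$$
J\circ A\ \stackrel{\sim}{\to}\ (J\bullet J)\circ A\ \xrightarrow{(J\bullet J)\circ\Delta}\ (J\bullet J)\circ(A\bullet A)\ \xrightarrow{\zeta}\ (J\circ A)\bullet(J\circ A),
$$
whose first arrow is induced by the inverse left unitor $J\stackrel{\sim}{\to}J\bullet J$ of $\bullet$. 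It therefore suffices to check that $\tau\circ A:A\to J\circ A$ intertwines $\Delta$ with this comultiplication.

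The key inputs for this last check are: the counitality of the comonoid $(I,\delta,\tau)$ in $(\mathcal M,\bullet,J)$, which together with coherence gives $(\tau\bullet\tau).\delta=(J\stackrel{\sim}{\to} J\bullet J).\tau$; naturality of $\zeta$ in its four arguments; functoriality of $\circ$; and the unitality axiom \eqref{eq1.2} in the form $\zeta.(\delta\circ(A\bullet A))=1_{A\bullet A}$. Assembling these, both composites from $A$ to $(J\circ A)\bullet(J\circ A)$ reduce to $((\tau\circ A)\bullet(\tau\circ A)).\Delta$, which is exactly $(f\bullet f).\Delta$ after postcomposition with $\pi^R_\circ\bullet\pi^R_\circ$.

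\textbf{Main obstacle.} The verification itself is a brief diagram chase; the only care needed is to identify the comonoid structure on $TK=J\circ A$ implicit in the weak bimonad framework of Section~\ref{sec:wbm}, and to keep careful track of the unitor isomorphisms of the two monoidal products.
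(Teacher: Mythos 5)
Your proposal is correct and follows essentially the same route as the paper: reduce to showing $\tau\circ A$ is a comonoid morphism into $J\circ A$ (using that $\pi^R_\circ$ is one), with counitality from the unitality of $\varpi$ and comultiplicativity from the counitality of $\delta$, the unitality axiom \eqref{eq1.2} and naturality of $\zeta$. Your slightly more explicit handling of the counit via \eqref{eq:T0_Pi} and of the unitor bookkeeping is just a spelled-out version of the paper's diagram chase.
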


\begin{proof}
Since $\pi^R_\circ:J\circ A \to R_\circ$ is a morphism of comonoids, we only
need to show that $\tau\circ A:A\to J\circ A$ is so. Its counitality follows
by the unitality of $\varpi$ and its comultiplicativity follows by
commutativity of the following diagram (in which the middle region commutes 
by the counitality of $\delta$).
$$
\xymatrix{
A\ar[r]^-\Delta\ar[d]_-{\tau \circ A}&
A\bullet A \ar[r]_-{\delta \circ (A\bullet A)}
\ar[d]^-{\tau \circ (A\bullet A)}
\ar@/^1.5pc/@{=}[rr]_-{\eqref{eq1.2}}&
(I\bullet I)\circ (A\bullet A)\ar[r]_-\zeta
\ar[d]^-{(\tau\bullet \tau)\circ (A\bullet A)}&
A\bullet A\ar[d]^-{(\tau \circ A)\bullet (\tau \circ A)}\\
J\circ A \ar[r]_-{J\circ \Delta}&
J\circ (A\bullet A)\ar@{=}[r]&
J\circ (A\bullet A)\ar[r]_-\zeta&
(J \circ A)\bullet (J \circ A)}
$$
\end{proof}

\begin{lemma}\label{lem:R_bimod}
For any right weak bimonoid $A$ in a duoidal category $(\mathcal M, \circ,
\bullet)$ in which idempotent morphisms split, $R_\circ$ is a $J$-$A$ bimodule.
\end{lemma}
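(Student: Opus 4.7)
The strategy is to transport the obvious $J$-$A$ bimodule structure on $J\circ A$ --- with left $J$-action $\varpi \circ A : J\circ J\circ A \to J\circ A$ and right $A$-action $J\circ\mu : J\circ A\circ A \to J\circ A$ --- to the retract $R_\circ$ along the splitting $\pi^R_\circ,\iota^R_\circ$ of the idempotent $\sqcap^R_\circ$. Concretely, I would define
\[
\lambda^{R_\circ} := \pi^R_\circ.(\varpi\circ A).(J\circ\iota^R_\circ):J\circ R_\circ\to R_\circ,\qquad
\rho^{R_\circ} := \pi^R_\circ.(J\circ\mu).(\iota^R_\circ\circ A):R_\circ\circ A\to R_\circ,
\]
and check that (i) each is an action, (ii) the two actions commute.

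For the right $A$-action $\rho^{R_\circ}$, no new work is needed beyond the preliminaries. By Theorem \ref{thm:wbm}, $T:=(-)\circ A$ is a weak bimonad on $(\mathcal M,\bullet)$, and $\sqcap^R_\circ$ is precisely the idempotent $\sqcap$ of \eqref{sqcap1} for this weak bimonad. The splitting object $R_\circ$ is the monoidal unit of the Eilenberg-Moore category, carrying the canonical $T$-module structure described in Section~\ref{sec:wbm}; the key commutative diagram \eqref{eq:wbm_1.10} (i.e.\ $\sqcap^R_\circ.(J\circ\mu).(\sqcap^R_\circ\circ A) = \sqcap^R_\circ.(J\circ\mu)$) is exactly the associativity of $\rho^{R_\circ}$. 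Unitality is automatic from the unit law of $\mu$ together with $\pi^R_\circ.\iota^R_\circ=\id$.

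For the left $J$-action $\lambda^{R_\circ}$, it suffices to show that $\sqcap^R_\circ$ is left $J$-linear, i.e.\ $\sqcap^R_\circ.(\varpi\circ A)=(\varpi\circ A).(J\circ\sqcap^R_\circ)$. The trick is to use the equivalent form \eqref{eq:sqcap_alt}, which factors $\sqcap^R_\circ$ as $(\varpi\circ A).(J\circ\sigma)$ for an explicit morphism $\sigma:A\to J\circ A$ (built from $\Delta.\eta$, $\zeta$ and $\varepsilon.\mu$). Both sides of the desired equality then reduce, by naturality of $\varpi\circ-$ with respect to $\sigma$, to the same composite built from $(\varpi\circ A).(J\circ\varpi\circ A)=(\varpi\circ A).(\varpi\circ J\circ A)$, which holds by the associativity of $\varpi$ (the monoid axiom on $J$ in $(\mathcal M,\circ)$). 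Once left $J$-linearity of $\sqcap^R_\circ$ is established, associativity and unitality of $\lambda^{R_\circ}$ follow by the usual retract argument, using $\pi^R_\circ.\sqcap^R_\circ=\pi^R_\circ$, the associativity/unitality of $\varpi$, and naturality.

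Finally, the bimodule compatibility $\rho^{R_\circ}.(\lambda^{R_\circ}\circ A)=\lambda^{R_\circ}.(J\circ\rho^{R_\circ})$ is the shadow on $R_\circ$ of the trivial commutation $(J\circ\mu).(\varpi\circ A\circ A)=(\varpi\circ A).(J\circ J\circ\mu)$ on $J\circ A$, which holds just by functoriality of $\circ$; it descends to $R_\circ$ once we know $\sqcap^R_\circ$ intertwines each action as above. The main technical obstacle, and the only point that is not already supplied by Section~\ref{sec:wbm}, is verifying the left $J$-linearity of $\sqcap^R_\circ$; but as just outlined, the equivalent form \eqref{eq:sqcap_alt} together with associativity of $\varpi$ makes this a short calculation rather than a use of any further weak bimonoid axiom.
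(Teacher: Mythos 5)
Your proposal is correct and follows essentially the same route as the paper: both define the right $A$-action $\pi^R_\circ.(J\circ\mu).(\iota^R_\circ\circ A)$ (inherited from the canonical $(-)\circ A$-module structure on $R_\circ$, with associativity resting on \eqref{eq:wbm_1.10}) and the left $J$-action $\pi^R_\circ.(\varpi\circ A).(J\circ\iota^R_\circ)$, and both reduce everything to the left $J$-linearity of $\sqcap^R_\circ$, obtained from the factorized form \eqref{eq:sqcap_alt} together with the associativity of $\varpi$, plus idempotency of $\sqcap^R_\circ$ and functoriality of $\circ$. The paper's proof is just a terser statement of exactly these ingredients, so no substantive difference to report.
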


\begin{proof}
Recall from \cite[page 11]{G. Bohm2011} that $R_\circ$ is a right $A$-module
via the action
$$
\xymatrix{
R_\circ \circ A \ar[r]^-{\iota^R_\circ \circ A}&
J\circ A \circ A \ar[r]^-{J\circ \mu}&
J\circ A \ar[r]^-{\pi^R_\circ}&
R_\circ .
}
$$
Symmetrically, it is a left $J$-module via
$$
\xymatrix{
J\circ R_\circ \ar[r]^-{J\circ \iota^R_\circ }&
J\circ J\circ A \ar[r]^-{\varpi \circ A}&
J\circ A \ar[r]^-{\pi^R_\circ}&
R_\circ .
}
$$
Unitality of both actions is evident. They are also associative and commute
with each other by the associativity of $\mu$ and of $\varpi$, together with
\eqref{eq:wbm_1.10} and the fact that $\sqcap^R_\circ$ is a morphism of left
$J$-modules (what follows from its form in \eqref{eq:sqcap_alt}), idempotency
of $\sqcap^R_\circ$ and functoriality of $\circ$.
\end{proof}

\begin{lemma}\label{lem:R_coeq}
For any right weak bimonoid $A$ in a duoidal category $(\mathcal M,\circ,
\bullet)$ in which idempotent morphisms split, there is a contractible
coequalizer of left $J$-modules 
$$
\xymatrix@C=35pt{
J\circ A\circ A \ar@<2pt>[r]^-{J\circ \mu}\ar@<-2pt>[r]_-{\vartheta^R}&
J\circ A \ar[r]^-{\pi^R_\circ}\ar@{-->}@/^1.5pc/[l]^-{J\circ A\circ \eta}&
R_\circ, \ar@{-->}@/^1.5pc/[l]^-{\iota^R_\circ}}
$$
where $\vartheta^R$ denotes the morphism
$$
\xymatrix@C=5pt@R=5pt{
&J\circ A\circ A\ar[rr]^-{J\circ A\circ \Delta}&&
J\circ A\circ (A\bullet A)\ar[r]^-\zeta&
(J\circ A)\bullet(J\circ A\circ A)
\ar[rr]^-{\raisebox{7pt}{${}_{(J\circ A)\bullet(J\circ \varepsilon.\mu)}$}}&&
(J\circ A)\bullet(J\circ J)
\ar[r]^-{\raisebox{7pt}{${}_{(J\circ A)\bullet \varpi}$}}&
J\circ A\\
=&
J\circ A\circ A\ar[rr]^-{J\circ A\circ \Delta}&&
J\circ A\circ (A\bullet A)\ar[r]^-{J\circ \zeta}&
J\circ((J\circ A)\bullet (A\circ A))
\ar[rr]^-{\raisebox{6pt}{${}_{J\circ((J\circ A)\bullet \varepsilon.\mu)}$}}&&
J\circ J\circ A \ar[r]^-{\varpi \circ A}&
J\circ A.}
$$
\end{lemma}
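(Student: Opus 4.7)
The plan is to exhibit the data of a split (contractible) coequalizer in $\mathcal M$; the left $J$-linearity of each morphism will be immediate and is collected at the end. Setting $f := J\circ\mu$, $g := \vartheta^R$, $q := \pi^R_\circ$, with sections $s := \iota^R_\circ$ and $t := J\circ A\circ\eta$, we must verify the four standard Beck identities $q.s = \id_{R_\circ}$, $f.t = \id_{J\circ A}$, $g.t = s.q$, and $q.f = q.g$.

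Three of these are immediate. Indeed, $q.s = \pi^R_\circ.\iota^R_\circ = \id_{R_\circ}$ is the splitting property of $\sqcap^R_\circ$. The identity $f.t = (J\circ\mu).(J\circ A\circ\eta)$ reduces to $\mu.(A\circ\eta) = \id_A$, which is the unit axiom of the monoid $A$. Finally, precomposing the formula for $\vartheta^R$ with $J\circ A\circ\eta$ merely replaces the factor $J\circ A\circ\Delta$ by $J\circ A\circ\Delta.\eta$, producing exactly the formula \eqref{eq:sqcap^R} for $\sqcap^R_\circ = \iota^R_\circ.\pi^R_\circ = s.q$.

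The substantive identity is $q.f = q.g$, i.e.\ $\pi^R_\circ.(J\circ\mu) = \pi^R_\circ.\vartheta^R$. Since $\iota^R_\circ$ is monic with $\iota^R_\circ.\pi^R_\circ = \sqcap^R_\circ$, it is equivalent to show
\[
\sqcap^R_\circ.(J\circ\mu)\ =\ \sqcap^R_\circ.\vartheta^R.
\]
I would prove this by expanding both sides using the alternative form \eqref{eq:sqcap_alt} of $\sqcap^R_\circ$ together with the second (equivalent) expression for $\vartheta^R$ displayed in the lemma statement, then performing a diagram chase. The required ingredients are coassociativity of $\Delta$, associativity of $\mu$, the duoidal axioms \eqref{eq1.1}--\eqref{eq1.2}, naturality of $\zeta$, functoriality of both monoidal products, and one of the right weak bimonoid axioms — the most likely candidate being $(\LRC)$, whose role is precisely to commute a $\mu$ past a $\Delta$ in the presence of two counits $\varepsilon$. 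This is the main obstacle; the resulting diagram is expected to be of comparable size to those already deferred to the Appendix in the proofs of Lemma \ref{lem:pi_colinear} and Theorem \ref{thm:wbm}.

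For the $J$-linearity claim, equip $J\circ A$ and $J\circ A\circ A$ with their free left $J$-actions via $\varpi$ and $R_\circ$ with the action of Lemma \ref{lem:R_bimod}. The morphisms $J\circ\mu$ and $J\circ A\circ\eta$ are $J$-linear tautologically (they act only on the rightmost slots). The alternative form \eqref{eq:sqcap_alt} exhibits $\sqcap^R_\circ$ as factoring through a morphism of the shape $\varpi\circ A$; hence $\sqcap^R_\circ$, and consequently its splitting factors $\pi^R_\circ$ and $\iota^R_\circ$, are $J$-linear. The second displayed expression for $\vartheta^R$ in the statement yields the same conclusion for $\vartheta^R$, completing the argument.
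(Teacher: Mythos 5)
Your reduction of the lemma to the four split-coequalizer identities is fine, and three of them ($\pi^R_\circ.\iota^R_\circ=\id$, $(J\circ\mu).(J\circ A\circ\eta)=\id$, and $\vartheta^R.(J\circ A\circ\eta)=\sqcap^R_\circ=\iota^R_\circ.\pi^R_\circ$) are handled exactly as in the paper, as is the $J$-linearity argument via the form \eqref{eq:sqcap_alt}. But the one identity that carries all the content of the lemma, the fork condition $\pi^R_\circ.(J\circ\mu)=\pi^R_\circ.\vartheta^R$ (equivalently $\sqcap^R_\circ.(J\circ\mu)=\sqcap^R_\circ.\vartheta^R$), is not proven: you only announce that a diagram chase ``is expected'' to work with a certain list of ingredients. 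This is precisely the step where a weak bimonoid axiom must enter, and it cannot be waved through --- without it one has only proven the lemma for bimonoids. Moreover your guess of which axiom does the job is off: in the paper's diagram the decisive region is $(\LLC)$, applied whiskered by $(J\circ A)\bullet(-)$ and suitably precomposed, not $(\LRC)$. One can see why from the shape of $\sqcap^R_\circ$ in \eqref{eq:sqcap^R}: there the surviving factor $J\circ A$ sits in the \emph{left} $\bullet$-slot while $J\circ\varepsilon.\mu$ acts in the \emph{right} one, which is the configuration of $(\LLC)$; $(\LRC)$ has the opposite placement and is the axiom already consumed in Lemma \ref{lem:kappa}(3).

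A secondary gap: the statement asserts the equality of the two displayed composites defining $\vartheta^R$, and you rely on the second form (for the $J$-linearity of $\vartheta^R$ and in your proposed expansion) without verifying it. The paper proves this equality first, from \eqref{eq:theta_forms} together with \eqref{eq1.1}, \eqref{eq1.2}, counitality of $\Delta$, naturality of $\zeta$ and functoriality of the monoidal products; your proof should either include this check or derive $J$-linearity of $\vartheta^R$ from the first form directly.
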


\begin{proof}
The given forms of $\vartheta^R$ are equal by \eqref{eq:theta_forms},
\eqref{eq1.1}, \eqref{eq1.2}, the counitality of $\Delta$, naturality of
$\zeta$ and functoriality of both monoidal products.

It follows by its form in \eqref{eq:sqcap_alt} that $\sqcap^R_\circ$ is a
morphism of left
$J$-modules. Hence so are $\pi^R_\circ$ and $\iota^R_\circ$ with respect to
the $J$-action on $R_\circ$ in Lemma \ref{lem:R_bimod}; proving that all
morphisms in the diagram are $J$-linear. By construction,
$\iota^R_\circ$ is a section of $\pi^R_\circ$ and $J\circ A\circ \eta$ is a
section of $J\circ \mu$. Both composites $\iota^R_\circ. \pi^R_\circ$ and
$\vartheta^R.(J\circ A\circ \eta)$ are equal to $\sqcap^R_\circ$ by virtue of
\eqref{eq:sqcap^R}. Finally, in the commutative diagram
$$
\xymatrix@R=15pt{
J\circ A \circ A \ar[rrr]^-{\vartheta^R}\ar[rd]^-{J\circ A\circ A\circ \eta}
\ar[dd]^-{J\circ \mu}&&&
J\circ A \ar[d]_-{J\circ A\circ \eta}
\\
& J\circ A \circ A\circ A\ar@{=}[r]\ar[d]^-{J\circ \mu \circ A}&
J\circ A \circ A\circ A\ar[r]^-{\vartheta^R \circ A}
\ar[d]_-{J\circ A \circ A\circ \Delta}&
J\circ A \circ A\ar[d]_-{J\circ A \circ \Delta}\\
J\circ A\ar[r]^-{J\circ A\circ \eta}\ar@/_4pc/[rrdddd]_-{\sqcap^R_\circ}&
J\circ A \circ A\ar[d]^-{J\circ A \circ \Delta}&
J\circ A \circ A \circ (A \bullet A)\ar[d]_-\zeta&
J\circ A \circ (A \bullet A)\ar[d]_-\zeta\\
&J\circ A \circ (A \bullet A)\ar[d]^-\zeta&
(J\circ A)\bullet (J\circ A \circ A\circ A)
\ar[r]^-{\raisebox{6pt}{${}_{(J\circ A)\bullet(\vartheta^R \circ A)}$}}
\ar[d]_-{(J\circ A)\bullet (J\circ \mu \circ A)}
\ar@{}[rddd]|-{(\LLC)}&
(J\circ A)\bullet (J\circ A \circ A)
\ar[d]_-{(J\circ A)\bullet (J\circ \varepsilon.\mu)}\\
&(J\circ A)\bullet (J\circ A \circ A)\ar@{=}[r]
\ar@{}[rdd]_-{\eqref{eq:sqcap^R}}&
(J\circ A)\bullet (J\circ A \circ A)
\ar[d]_-{(J\circ A)\bullet (J\circ \varepsilon.\mu)}&
(J\circ A)\bullet (J\circ J)\ar[dd]_-{(J\circ A)\bullet \varpi}\\
&&(J\circ A)\bullet (J\circ J)\ar[d]_-{(J\circ A)\bullet \varpi}\\
&& J\circ A \ar@{=}[r]&
J\circ A,}
$$
the right vertical is equal to $\sqcap^R_\circ=\iota^R_\circ.\pi^R_\circ$
by \eqref{eq:sqcap^R}.
Since $\iota^R_\circ$ is a (split) monomorphism, this proves that the solid
arrows in the diagram in the claim constitute a fork.
\end{proof}

\begin{lemma}\label{lem:R_J-sFmonoid}
For any right weak bimonoid $A$ in a duoidal category $(\mathcal M, \circ,
\bullet)$ in which idempotent morphisms split, $R_\circ$ is a separable
Frobenius monoid in the category of left $J$-modules.
\end{lemma}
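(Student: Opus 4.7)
The plan is to verify that the four morphisms making $R_\circ$ a separable Frobenius monoid in $(\mathcal M,\bullet)$---namely $\mu^R_\circ$, $\eta^R_\circ$, $\Delta^R_\circ$, $\varepsilon^R_\circ$ of \eqref{eq:R_sepFrob}---are left $J$-linear with respect to the $J$-action on $R_\circ$ from Lemma \ref{lem:R_bimod}, its natural extension through the interchange law $\zeta$ to the tensor product $R_\circ\bullet R_\circ$, and the canonical action $\varpi:J\circ J\to J$ on $J$ itself. The Frobenius and separability identities are equations of morphisms that already hold in $\mathcal M$, so they transport automatically once each structure morphism is recognized as living in left $J$-modules. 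Hence the whole task reduces to a $J$-linearity check on the four maps.

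For the unit $\eta^R_\circ=\pi^R_\circ.(J\circ\eta)$, I would combine the $J$-linearity of $\pi^R_\circ$ (noted in the proof of Lemma \ref{lem:R_coeq}, immediately following its statement) with the observation that $J\circ\eta$ is $J$-linear by functoriality of $\circ$. Dually, for the counit $\varepsilon^R_\circ=\varpi.(J\circ\varepsilon).\iota^R_\circ$, the map $\iota^R_\circ$ is $J$-linear by the same reference, $J\circ\varepsilon$ is $J$-linear by functoriality of $\circ$, and $\varpi:J\circ J\to J$ is $J$-linear by the associativity of the monoid $(J,\varpi,\tau)$.

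The less immediate part concerns $\mu^R_\circ$ and $\Delta^R_\circ$, which are obtained from a splitting of the idempotent $\chi^R_{R_\circ,R_\circ}$ of \eqref{chi}. I would first verify that $\chi^R_{R_\circ,R_\circ}$ is itself a morphism of left $J$-modules, where $R_\circ\bullet R_\circ$ carries its diagonal $J$-action built from $\zeta$ and the $R_\circ$-action of Lemma \ref{lem:R_bimod}. This is a diagram chase using the naturality of $\zeta$, functoriality of both monoidal products, the duoidal axioms \eqref{eq1.1}--\eqref{eq1.2}, and the explicit forms \eqref{eq:sqcap^R} of $\sqcap^R_\circ$ and of the $J$-action. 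Since idempotent splittings are preserved and reflected by forgetful functors, the splitting of $\chi^R_{R_\circ,R_\circ}$ then takes place inside left-$J$-modules, so $\pi^R_{R_\circ,R_\circ}$ and $\iota^R_{R_\circ,R_\circ}$ are $J$-linear. Composing with the $J$-linear maps $R_\circ\bullet\iota^R_\circ$, $R_\circ\bullet(J\circ\varepsilon)$, $R_\circ\bullet\varpi$ (and their duals involving $J\circ\eta$ and $\pi^R_\circ$) yields the desired $J$-linearity of $\mu^R_\circ$ and $\Delta^R_\circ$.

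The main obstacle I anticipate is articulating precisely the diagonal $J$-action on $R_\circ\bullet R_\circ$ so that $\chi^R_{R_\circ,R_\circ}$ is manifestly a $J$-linear idempotent; this is exactly the step where the interchange law must interact with the multiplication of $J$ and with the $A$-action from Lemma \ref{lem:R_bimod}. Once that compatibility is set up cleanly, the rest collapses to functoriality and naturality, and Lemmas \ref{lem:R_bimod}--\ref{lem:R_coeq} (in particular the $J$-linearity of $\sqcap^R_\circ$, $\pi^R_\circ$ and $\iota^R_\circ$) do the remaining bookkeeping.
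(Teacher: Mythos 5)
Your proposal is correct and follows essentially the paper's own route: everything reduces to left $J$-linearity of the four structure morphisms in \eqref{eq:R_sepFrob}, the crux being that $\chi^R_{R_\circ,R_\circ}$ is $J$-linear (which the paper gets from the naturality of $\chi^R$ together with the fact, from Lemma \ref{lem:R_bimod}, that the $J$-action on $R_\circ$ is a morphism of right $A$-modules), with Lemma \ref{lem:R_coeq} supplying the $J$-linearity of $\pi^R_\circ$ and $\iota^R_\circ$ exactly as you use it. The only divergence is bookkeeping: the paper obtains $\Delta^R_\circ$ (and $\varepsilon^R_\circ$) by transporting the comonoid structure of $J\circ A$ along the $J$-linear comonoid epimorphism $\pi^R_\circ$ and then extracts $\mu^R_\circ$ from $\chi^R_{R_\circ,R_\circ}=\Delta^R_\circ.\mu^R_\circ$ using that $\Delta^R_\circ$ is a split mono, whereas you compose the explicit formulas directly; your intermediate appeal to splitting $\chi^R_{R_\circ,R_\circ}$ inside $J$-modules is superfluous (and as stated would only give $J$-linearity for the action induced on the retract, which would still have to be identified with the action of Lemma \ref{lem:R_bimod}), but nothing in your final composition argument depends on it.
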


\begin{proof}
We need to show that the structure morphisms \eqref{eq:R_sepFrob} are left
$J$-linear.
It is straightforward to check that $J\circ A$ is a comonoid in the category of
left $J$-modules, with comultiplication and counit
$$
\xymatrix@C=20pt{
J\circ A\ar[r]^-{J\circ \Delta}&
J\circ (A\bullet A)\ar[r]^-\zeta&
(J\circ A)\bullet (J\circ A) &\textrm{and}&
J\circ A\ar[r]^-{J\circ \varepsilon}&
J\circ J \ar[r]^-\varpi&
J.}
$$
Since $\pi^R_\circ:J\circ A \to R_\circ$ is a morphism of comonoids, also
$R_\circ$ is a comonoid in the category of left $J$-modules by Lemma
\ref{lem:R_coeq}.

It follows by Lemma \ref{lem:R_bimod} that the left $J$-action on $R_\circ$ is
a morphism of right $A$-modules. Hence it can be deduced from the naturality of
$\chi^R$, the naturality and associativity of $\zeta$ and the functoriality of
the monoidal product $\circ$, that $\chi^R_{R_\circ,R_\circ}$ is a morphism of
left $J$-modules. Since $\chi^R_{R_\circ,R_\circ}=\Delta^R_\circ.\mu^R_\circ$
and $\Delta^R_\circ$ is a morphism of left $J$-modules and also a (split)
monomorphism, $\mu^R_\circ$ is a morphism of $J$-modules too. Finally, the
unit $\eta^R_\circ$ is a morphism of left $J$-modules since $\pi^R_\circ$ is
so by Lemma \ref{lem:R_coeq}.
\end{proof}

\begin{lemma}\label{lem:mu_colinear}
Let $A$ be a right weak bimonoid in a duoidal category $(\mathcal
M,\circ,\bullet)$ in which idempotent morphisms split. Regard $A$ as a left
$R_\circ$-comodule via the coaction 
$$
\varrho:\xymatrix{
A \ar[r]^-\Delta&
A\bullet A \ar[r]^-{(\tau\circ A)\bullet A}&
(J\circ A) \bullet A\ar[r]^-{\pi^R_\circ \bullet A}&
R_\circ \bullet A}
$$
in terms of the comonoid morphism in Lemma \ref{lem:comonoid_map};
and regard $A\circ A$ as a left $R_\circ$-comodule via the coaction
$$
\xymatrix{
A\circ A \ar[r]^-{A \circ \varrho}&
A\circ (R_\circ \bullet A)\ar[r]^-\zeta&
(J\circ R_\circ)\bullet (A\circ A)\ar[r]^-{\gamma \bullet (A\circ A)}&
R_\circ\bullet (A\circ A)}
$$
in terms of the action $\gamma:J\circ R_\circ\to R_\circ$ (cf. Lemma
\ref{lem:R_bimod}). Then the multiplication $\mu:A\circ A \to A$ is a morphism
of left $R_\circ$-comodules.
\end{lemma}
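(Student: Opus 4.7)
The plan is to verify commutativity of the colinearity square for $\mu$. Since $\iota^R_\circ\bullet A$ is a split monomorphism (with splitting $\pi^R_\circ\bullet A$, because $\bullet$ is a bifunctor), it suffices to check the required equation after post-composing with it. This reduces the task to proving an equality of morphisms $A\circ A\to(J\circ A)\bullet A$ built from $\mu$, $\Delta$, $\tau$, $\sqcap^R_\circ$, and instances of $\zeta$.

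First I would unpack the right-hand side. Expanding $\varrho=(\pi^R_\circ\bullet A)\cdot((\tau\circ A)\bullet A)\cdot\Delta$ and using the naturality of $\zeta$ to push the morphisms $A\circ(\pi^R_\circ\bullet A)$ and $A\circ((\tau\circ A)\bullet A)$ past the instance of $\zeta$ appearing in the coaction on $A\circ A$, the post-composed RHS reorganizes into a form whose ``left $\bullet$-slot factor'' is $\iota^R_\circ\cdot\gamma\cdot(J\circ(\pi^R_\circ\cdot(\tau\circ A)))$. Substituting the explicit form $\iota^R_\circ\cdot\gamma=\sqcap^R_\circ\cdot(\varpi\circ A)\cdot(J\circ\iota^R_\circ)$ from Lemma \ref{lem:R_bimod}, then applying $\iota^R_\circ\cdot\pi^R_\circ=\sqcap^R_\circ$, the left $J$-linearity of $\sqcap^R_\circ$ (as shown in the proof of Lemma \ref{lem:R_coeq}), the idempotency of $\sqcap^R_\circ$, and the right unitality $\varpi\cdot(J\circ\tau)=\mathrm{id}_J$, this composite collapses to $\sqcap^R_\circ$. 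Hence the post-composed RHS simplifies to $(\sqcap^R_\circ\bullet\mu)\cdot\zeta\cdot(A\circ\Delta)$ (with $A=J\bullet A$ as the left $\bullet$-factor entering $\zeta$).

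Next I would apply the compatibility axiom \eqref{eq:WB} on the LHS, rewriting $\Delta\cdot\mu=(\mu\bullet\mu)\cdot\zeta\cdot(\Delta\circ\Delta)$, so that the post-composed LHS becomes $((\sqcap^R_\circ\cdot(\tau\circ A)\cdot\mu)\bullet\mu)\cdot\zeta\cdot(\Delta\circ\Delta)$. Comparing with the simplified RHS, the remaining task is to verify
\[
((\sqcap^R_\circ\cdot(\tau\circ A)\cdot\mu)\bullet\mu)\cdot\zeta\cdot(\Delta\circ\Delta)\,=\,(\sqcap^R_\circ\bullet\mu)\cdot\zeta\cdot(A\circ\Delta),
\]
a diagrammatic statement about how the idempotent $\sqcap^R_\circ$ interacts with $\mu$ in the presence of $\Delta$ and the interchange law. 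Expanding $\sqcap^R_\circ$ via \eqref{eq:sqcap^R}, this follows by a diagram chase whose essential step is one application of the weak bimonoid axiom $(\LRC)$ --- the axiom controlling $\varepsilon\cdot\mu$ applied in the left $\bullet$-slot in the presence of $\zeta$ and $\mu$. A more economical route is to apply Lemma \ref{lem:kappa}(3) with $X=J$ and $Y=A$, since the morphism $\kappa$ already encodes the $\sqcap^R_\circ$-type construction and item (3) packages precisely the required instance of $(\LRC)$.

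The main obstacle is the layered diagrammatic bookkeeping: several instances of $\zeta$ with differing slots, unitors identifying $I\circ A\cong A$ and $J\bullet A\cong A$, and compositions involving $\tau$, $\pi^R_\circ$, $\iota^R_\circ$, and $\gamma$ must all be tracked carefully. The conceptual content, however, is a single use of $(\LRC)$, which asserts exactly that the $R_\circ$-projection of a product decomposes compatibly with the second factor --- the precise content of colinearity of $\mu$.
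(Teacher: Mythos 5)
Your front-end reduction is correct and checks out in detail: post-composing with the split monomorphism $\iota^R_\circ\bullet A$, pushing $\pi^R_\circ.(\tau\circ A)$ past $\zeta$ by naturality, inserting the explicit $J$-action $\gamma$ from Lemma \ref{lem:R_bimod}, and using $\iota^R_\circ.\pi^R_\circ=\sqcap^R_\circ$, the left $J$-linearity and idempotency of $\sqcap^R_\circ$ and unitality of $(J,\varpi,\tau)$ does collapse the right-hand side to $(\sqcap^R_\circ\bullet\mu).\zeta.(A\circ\Delta)$, and one use of \eqref{eq:WB} puts the left-hand side in the form $((\sqcap^R_\circ.(\tau\circ A).\mu)\bullet\mu).\zeta.(\Delta\circ\Delta)$. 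The gap is in the step you single out as the conceptual content. The remaining identity compares two morphisms $A\circ A\to(J\circ A)\bullet A$ in which the left $\bullet$-factor \emph{survives}, encoded in $\sqcap^R_\circ$, and by \eqref{eq:sqcap^R} the counit-collapse $\varepsilon.\mu$ inside $\sqcap^R_\circ$ sits in the \emph{right} $\bullet$-slot; the weak axiom governing that configuration is $(\LLC)$, not $(\LRC)$. Lemma \ref{lem:kappa}(3) with $X=J$, $Y=A$ cannot package the required instance: $\kappa_{J,A}$ kills the left $\bullet$-slot by $\varepsilon.\mu$ and item (3) is an identity between morphisms with codomain $X\circ Y=J\circ A$, which is the opposite shape. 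Concretely, in the braided picture your residual identity reads $\sqcap(a_1a'_1)\otimes a_2a'_2=\sqcap(a'_1)\otimes a\,a'_2$ with $\sqcap(x)=1_1\varepsilon(x1_2)$, and the split it requires is $\varepsilon(a b_2)\varepsilon(b_1c)=\varepsilon(abc)$ (the $(\LLC)$ form, applied with $c$ a leg of $\Delta.\eta$), not the $(\LRC)$ form $\varepsilon(ab_1)\varepsilon(b_2c)=\varepsilon(abc)$.

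The correct tool is exactly what the paper uses at the analogous point of its appendix diagram: the fork property $\pi^R_\circ.\vartheta^R=\pi^R_\circ.(J\circ\mu)$ of Lemma \ref{lem:R_coeq} (whose proof is where $(\LLC)$ enters), combined with a second application of \eqref{eq:WB} together with counitality of $\Delta$, and the left $J$-linearity of $\pi^R_\circ$ from that same lemma. So your strategy is viable and is essentially a repackaged version of the paper's diagram, but the one place where the weak structure genuinely enters is attributed to the wrong axiom and the wrong lemma, and the diagram chase that would expose this is not carried out; replace the appeal to $(\LRC)$ and Lemma \ref{lem:kappa}(3) by an appeal to $(\LLC)$ via Lemma \ref{lem:R_coeq} to close the argument.
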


\begin{proof}
The coaction on $A\circ A$ is counital and coassociative since $R_\circ$ is a
comonoid in the category of left $J$-modules (cf. Lemma
\ref{lem:R_J-sFmonoid}) and by the counitality and the coassociativity of
$\varrho$, using also an associativity axiom in \eqref{eq1.1}, naturality of
$\zeta$ and functoriality of the monoidal product $\bullet$. Then the claim
follows by commutativity of the diagram in the Appendix, on page
\pageref{page_mu_colinear}. In the diagram on page \pageref{page_mu_colinear},
the region marked by (A) commutes by the second explicit form of the morphism
$\vartheta^R$ in Lemma \ref{lem:R_coeq}, unitality of $\varpi$ and by
functoriality of the monoidal product $\bullet$. The region marked by (B)
commutes since $\pi^R_\circ$ coequalizes the parallel morphisms in Lemma
\ref{lem:R_coeq}. The region marked by (C) commutes since $\pi^R_\circ$ is a
morphism of left $J$-modules by Lemma \ref{lem:R_coeq}.
\end{proof}

Symmetric considerations apply to the Eilenberg-Moore categories of all weak
bi(co)monads in Corollary \ref{cor:wbms}.

\section{The base objects}\label{sec:base}

Let $A$ be a weak bimonoid in a duoidal category $(\mathcal M, \circ,\bullet)$
in which idempotent morphisms split. Applying the transformations $\circ$,
$\bullet$ and $\ast$ in Section \ref{sec:axioms}, and all of their composites
to the morphism $\sqcap^{R}_\circ$ in \eqref{eq:sqcap^R}, we obtain an eight
member family of idempotent morphisms fitting the diagram 
$$
\xymatrix@R=20pt{
& \overline\sqcap^L_\circ \ar@{<->}[rr]^-{\circ} \ar@{<-}[d] 
&& \sqcap^L_\circ\ar@{<->}[dd]^-{\ast} \\ 
\sqcap^R_\circ\ar@{<->}[rr]^(.7){\circ}\ar@{<->}[dd]_-{\ast}
\ar@{<->}[ru]^-{\bullet} 
& \ar@{->}[d]^-{\ast} 
& \overline\sqcap^R_\circ \ar@{<->}[dd]^(.3){\ast}
\ar@{<->}[ru]^-{\bullet}\\ 
& \overline\sqcap^R_\bullet \ar@{<-}[r] 
& \ar@{->}[r]^-{\bullet} 
& \sqcap^L_\bullet \\ 
\sqcap^R_\bullet\ar@{<->}[rr]_-{\bullet}\ar@{<->}[ru]_-{\circ} 
&& \overline\sqcap^L_\bullet . \ar@{<->}[ru]_-{\circ} 
}
$$
Their splittings define the respective objects $\overline L_\circ$, $L_\circ$,
$R_\circ$, $\overline R_\circ$, $\overline R_\bullet$, $L_\bullet$,
$R_\bullet$ and $\overline L_\bullet$ in $\mathcal M$. Each of them carries
a separable Frobenius monoid structure in the appropriate
monoidal category $(\mathcal M, \circ)$ or $(\mathcal M, \bullet)$.
Symmetric considerations to those in Section \ref{sec:modcat} apply to
them. For example, by symmetric versions of Lemma
\ref{lem:comonoid_map}, there are comonoid morphisms from $A$ to
$\overline L_\circ$, $L_\circ$, $R_\circ$ and $\overline R_\circ$; and there
are monoid morphisms from $\overline R_\bullet$, $L_\bullet$,
$R_\bullet$ and $\overline L_\bullet$ to $A$.

If $A$ is a weak bimonoid in a braided monoidal category --- regarded as a
duoidal category --- whose idempotent morphisms split, then $R_\circ$,
$R_\bullet$ and $\overline R_\bullet$ become isomorphic to the `right' or
`source' Frobenius monoid, while $\overline R_\circ$ becomes isomorphic to the
opposite monoid and opposite comonoid. Symmetrically, $L_\circ$,
$L_\bullet$ and $\overline L_\bullet$ become isomorphic to the `left' or
`target' Frobenius monoid, while $\overline L_\circ$ becomes isomorphic to the
opposite monoid and opposite comonoid. Thus all the eight base Frobenius
monoids become (anti-)isomorphic (cf. \cite{C. Pastro}). 
The aim of this section is to relate these objects (that we call the {\em
`base objects'} of $A$) in our more general setting.

Applying the isomorphism in \eqref{eq:R_Rbar_iso} to the weak
bi(co)monads in \eqref{eq:induced_wbm}, we obtain the following.

\begin{proposition}\label{prop:bar_base}
For any weak bimonoid $A$ in a duoidal category $(\mathcal M, \circ,\bullet)$
in which idempotent morphisms split, the following objects of $\mathcal M$
(defined above) are pairwise isomorphic.
\begin{itemize}
\item[{(1)}] $L_\bullet \cong \overline L_\bullet$ as monoids and
 comonoids in $(\mathcal M, \circ)$.
\item[{(2)}] $R_\bullet \cong \overline R_\bullet$ as monoids and
 comonoids in $(\mathcal M, \circ)$.
\item[{(3)}] $L_\circ \cong \overline R_\circ$ as monoids and
 comonoids in $(\mathcal M, \bullet)$.
\item[{(4)}] $R_\circ \cong \overline L_\circ$ as monoids and
 comonoids in $(\mathcal M, \bullet)$.
\end{itemize}
\end{proposition}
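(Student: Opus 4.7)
The plan is to observe that each of the four asserted isomorphisms is an instance of the general statement, recalled in the discussion following \eqref{eq:R_Rbar_iso}, that for any weak bimonad $T$ on a monoidal category in which idempotent morphisms split, the canonical map $R\to\overline R$ (the composite $\overline\pi.\iota$) is at once an isomorphism of monoids and of comonoids. Applied to each of the four weak bi(co)monads in \eqref{eq:induced_wbm}, this will produce the four claimed isomorphisms, provided that the relevant $R$ and $\overline R$ objects are correctly located among the eight base objects.

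First I would identify, for each of the four weak bi(co)monads, which pair of idempotent morphisms $\sqcap$ and $\overline\sqcap$ (in the sense of \eqref{sqcap1} and its $\overline{(-)}$-counterpart) is associated to it, and then locate these pairs in the diagram at the beginning of Section \ref{sec:base}. For the weak bimonad $(-)\circ A$ on $(\mathcal M,\bullet)$, the idempotent $\sqcap$ of \eqref{sqcap1} coincides by construction with $\sqcap^R_\circ$ in \eqref{eq:sqcap^R}; its $\overline\sqcap$, obtained by re-interpreting $T$ as a weak bimonad on $(\mathcal M,\bullet^{\mathrm{op}})$, must then coincide with the $\bullet$-transform of $\sqcap^R_\circ$, which according to the diagram of Section \ref{sec:base} is $\overline\sqcap^L_\circ$. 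The corresponding $R$ and $\overline R$ are thus $R_\circ$ and $\overline L_\circ$, and the isomorphism of \eqref{eq:R_Rbar_iso} yields part (4). For $A\circ(-)$ on $(\mathcal M,\bullet)$, the $\circ$-transform $\sqcap^L_\circ$ of $\sqcap^R_\circ$ is the corresponding $\sqcap$ and its $\bullet$-transform $\overline\sqcap^R_\circ$ is the corresponding $\overline\sqcap$; this gives part (3). Parts (1) and (2) are then obtained from parts (3) and (4) by the $\ast$-transformation, which converts the weak bimonads on $(\mathcal M,\bullet)$ into the weak bicomonads $A\bullet(-)$ and $(-)\bullet A$ on $(\mathcal M,\circ)$ appearing in \eqref{eq:induced_wbm}.

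With those identifications made, each of the four isomorphisms, together with its compatibility with the multiplication, comultiplication, unit and counit of the separable Frobenius structures, is immediate from the argument recalled between \eqref{eq:R_Rbar_iso} and the end of Section \ref{sec:wbm}.

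The only real obstacle, as I see it, is not conceptual but notational: one must verify that applying the $\bullet$-transformation (which flips the passive and active directions of the $\bullet$-structure) to the formula \eqref{eq:sqcap^R} for $\sqcap^R_\circ$ reproduces \emph{exactly} the formula for $\overline\sqcap^L_\circ$, and not some other member of the eight-member family. This is a routine but slightly tedious diagrammatic check: one writes out $\overline\sqcap$ for the weak bimonad $(-)\circ A$ using the explicit form displayed before \eqref{eq:R_Rbar_iso}, and matches it term by term, using the duoidal axioms \eqref{eq1.1} and \eqref{eq1.2}, with the $\bullet$-image of \eqref{eq:sqcap^R}. Once this single matching is in place, the remaining three identifications follow automatically by invoking the symmetries $\circ$, $\bullet$, $\ast$ as summarised in the diagram of Section \ref{sec:base}.
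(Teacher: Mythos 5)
Your proposal is correct and is essentially the paper's own proof, which consists precisely of applying the isomorphism \eqref{eq:R_Rbar_iso} to the four weak bi(co)monads listed in \eqref{eq:induced_wbm} (using Corollary \ref{cor:wbms} to know these are indeed weak bi(co)monads). The only blemish is a harmless labelling slip in your treatment of part (3): according to the diagram in Section \ref{sec:base} the $\circ$-transform of $\sqcap^R_\circ$ is $\overline\sqcap^R_\circ$ (and a direct computation shows this is the idempotent $\sqcap$ of the weak bimonad $A\circ(-)$ on $(\mathcal M,\bullet)$, while $\sqcap^L_\circ$, the $\bullet$-image of $\overline\sqcap^R_\circ$, plays the role of $\overline\sqcap$), but since you pair exactly the two idempotents $\{\sqcap^L_\circ,\overline\sqcap^R_\circ\}$ belonging to this weak bimonad and the isomorphism of \eqref{eq:R_Rbar_iso} is symmetric in $R$ and $\overline R$, the conclusion $L_\circ\cong\overline R_\circ$ is unaffected.
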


Note that by splitting certain idempotent morphisms, the objects
occurring in Proposition \ref{prop:bar_base} are defined only up-to
isomorphism. Hence without any loss of generality, we may identify the
isomorphic objects in parts (1)-(4). We will do so throughout the paper,
replacing the objects written on the right hand side in parts (1)-(4) of
Proposition \ref{prop:bar_base} with their isomorphic copies on the left hand
side.

\begin{proposition}\label{prop:bimodiso}
For any weak bimonoid $A$ in a duoidal category $(\mathcal M, \circ,\bullet)$
in which idempotent morphisms split, the following objects of $\mathcal M$
(defined above) are pairwise isomorphic.
\begin{itemize}
\item[{(1)}] $L_\circ \cong L_\bullet\circ J$ as $L_\bullet$-$J$ bimodules.
\item[{(2)}] $L_\circ \cong R_\bullet\circ J$ as $R_\bullet$-$J$ bimodules.
\item[{(3)}] $R_\circ \cong J\circ R_\bullet$ as $J$-$R_\bullet$ bimodules.
\item[{(4)}] $R_\circ \cong J\circ L_\bullet$ as $J$-$L_\bullet$ bimodules.
\item[{(5)}] $L_\bullet \cong L_\circ\bullet I$ as $L_\circ$-$I$ bicomodules.
\item[{(6)}] $L_\bullet \cong R_\circ\bullet I$ as $R_\circ$-$I$ bicomodules.
\item[{(7)}] $R_\bullet \cong I\bullet R_\circ$ as $I$-$R_\circ$ bicomodules.
\item[{(8)}] $R_\bullet \cong I\bullet L_\circ$ as $I$-$L_\circ$ bicomodules.
\end{itemize}
\end{proposition}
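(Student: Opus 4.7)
The plan is to exploit the symmetries of the duoidal structure to reduce the eight isomorphism assertions to a single one, and then prove that one by identifying the defining idempotents. By the transformations $\circ$, $\bullet$ and $\ast$ from Sections \ref{sec:axioms} and \ref{sec:base}, together with the identifications in Proposition \ref{prop:bar_base}, the eight statements collapse to a single symmetry class: $\ast$ exchanges (1)--(4) with (5)--(8); within each such group, $\circ$ and $\bullet$ swap $L$ with $R$ while leaving the subscript fixed. Thus it suffices to establish a single assertion, which I will take to be (3): $R_\circ \cong J \circ R_\bullet$ as $J$-$R_\bullet$ bimodules.

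For the reduced claim, the idea is to realize the two sides as isomorphic retracts of isomorphic ambient objects. By construction $R_\circ$ is the image of the idempotent $\sqcap^R_\circ: J \circ A \to J \circ A$ displayed in \eqref{eq:sqcap^R}; by $\ast$-symmetry, $R_\bullet$ is the image of a completely analogous idempotent $\sqcap^R_\bullet$ on $I \bullet A$, so that $J \circ R_\bullet$ is the image of the idempotent $J \circ \sqcap^R_\bullet$ on $J \circ (I \bullet A)$. The coherence isomorphism $J \circ (I \bullet A) \cong J \circ A$, assembled from $\tau$, $\varpi$ and $\zeta$ via the axioms \eqref{eq1.1}--\eqref{eq1.2} in exactly the manner of \eqref{eq:coh}, should transport $J \circ \sqcap^R_\bullet$ onto $\sqcap^R_\circ$. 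Once this intertwining is verified, splitting the two idempotents yields mutually inverse isomorphisms between $J \circ R_\bullet$ and $R_\circ$.

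The principal technical obstacle is precisely this identification of idempotents after the coherence transport. It requires unfolding both composites into their explicit expressions in terms of $\Delta$, $\mu$, $\eta$, $\varepsilon$, $\tau$, $\varpi$ and $\zeta$, and recognizing their equality as a single instance of the weak bimonoid axiom $(\LRC)$ (or its $\bullet$-symmetric counterpart), combined with naturality of $\zeta$, functoriality of both monoidal products, and the duoidal associativity \eqref{eq1.1} and unitality \eqref{eq1.2} laws --- a calculation parallel in spirit to those used in Lemmas \ref{lem:pi_colinear} and \ref{lem:R_coeq}, and naturally best deferred to the Appendix. Once that identity is in hand, the bimodule linearity is essentially automatic: left $J$-linearity follows because the splittings of both idempotents are $J$-linear (by the symmetric analogues of Lemmas \ref{lem:R_coeq} and \ref{lem:R_J-sFmonoid}), and right $R_\bullet$-linearity is inherited from the right $A$-module structure on $J \circ A$ (Lemma \ref{lem:R_bimod}) pulled back along the monoid morphism $R_\bullet \to A$ provided by the symmetric version of Lemma \ref{lem:comonoid_map}.
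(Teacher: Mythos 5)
Your reduction of the eight claims to a single one by the transformations $\circ$, $\bullet$, $\ast$ matches the paper, but the core of your argument rests on a premise that fails: there is no ``coherence isomorphism'' $J\circ(I\bullet A)\cong J\circ A$ in a general duoidal category. The structure morphisms $\tau$, $\varpi$, $\delta$ and $\zeta$ are not invertible (they only become so in the braided case, where indeed all eight base objects collapse), so the ambient objects carrying the two idempotents are genuinely non-isomorphic, and one cannot argue that the two retracts are ``isomorphic retracts of isomorphic ambient objects.'' Moreover, even granting some comparison map intertwining $J\circ\sqcap^R_\bullet$ with $\sqcap^R_\circ$, a one-directional intertwiner does not identify the splittings. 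The paper's actual mechanism (for part (1)) is to produce two non-invertible morphisms in opposite directions, $\phi=(A\bullet\tau)\circ J:(A\bullet I)\circ J\to A\circ J$ and a map $\phi'$ in the other direction which is \emph{not} a coherence morphism at all --- it is built from the weak bimonoid structure, using $\Delta.\eta$, $\varepsilon.\mu$, $\zeta$, $\delta$ and $\varpi$ --- and then to verify $\phi.\phi'=\sqcap^L_\circ$ and $\phi'.\phi=\sqcap^L_\bullet\circ J$ (via \eqref{eq:coh}, \eqref{eq1.1}, \eqref{eq1.2}, naturality of $\zeta$ and counitality of $\delta$). Only because the two composites equal the two defining idempotents do $\phi,\phi'$ (co)restrict to mutually inverse isomorphisms of the retracts. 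Your proposal is missing both the second comparison map and the recognition that it must involve the (co)monoid structure of $A$ rather than coherence data; as stated, the key step ``should transport $J\circ\sqcap^R_\bullet$ onto $\sqcap^R_\circ$'' cannot be carried out.

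A secondary but real gap is the bimodule compatibility, which you declare ``essentially automatic.'' The $J$-linearity half is indeed easy (symmetric versions of Lemma \ref{lem:R_coeq}), and the module structure is correctly obtained by restriction along the monoid morphism from Lemma \ref{lem:comonoid_map} applied to the $A$-$J$ bimodule structure of Lemma \ref{lem:R_bimod}; but the linearity over the base monoid ($L_\bullet$-linearity in (1), correspondingly $R_\bullet$-linearity in your (3)) requires a separate diagram chase using \eqref{eq:wbm_1.10}, the fact that $(A\bullet\tau).\iota^L_\bullet$ is a monoid morphism, and that $\pi^L_\bullet$ is a morphism of $L_\bullet$-modules (a symmetric form of Lemma \ref{lem:pi_colinear}). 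So even after repairing the main step you would still owe this verification.
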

\begin{proof}
We only prove part (1), all other claims follow from it applying the
transformations $\circ$, $\bullet$, $\ast$ and their composites. Consider 
the morphisms 
$
\xymatrix{
\phi: (A\bullet I)\circ J\ar[r]^-{(A\bullet \tau)\circ J}&
A\circ J}
$ and
$$
\xymatrix@C=20pt @R=8pt{
\phi':A\circ J\ar[r]^-{\delta \circ A\circ J}&
(I\bullet I)\circ A\circ J\ar[r]^-{(\Delta.\eta\bullet I)\circ A\circ J}&
(A\bullet A\bullet I)\circ A\circ J\ar[r]^-\zeta&\\
&(A\circ A\circ J)\bullet ((A\bullet I)\circ J)
\ar[r]^-{\raisebox{7pt}{${}_{(\varepsilon.\mu \circ J)
\bullet ((A\bullet I)\circ J)}$}}&
(J\circ J)\bullet ((A\bullet I)\circ J)
\ar[r]^-{\raisebox{7pt}{${}_{\varpi \bullet ((A\bullet I)\circ J)}$}}&
(A\bullet I)\circ J.}
$$
By functoriality of both monoidal products, naturality of $\zeta$
and counitality of $\delta$, it follows that $\phi.\phi'=\sqcap^L_\circ$. On the
other hand, by commutativity of
$$
\xymatrix@C=45pt @R=20pt{
(A\bullet I)\circ J\ar[r]^-{(A\bullet \tau)\circ J}
\ar[d]_-{\delta \circ (A\bullet I)\circ J}&
A\circ J\ar[d]^-{\delta \circ A\circ J}\\
(I\bullet I)\circ (A\bullet I)\circ J
\ar[d]_-{(\Delta.\eta \bullet I)\circ (A\bullet I)\circ J}&
(I\bullet I)\circ A \circ J
\ar[d]^-{(\Delta.\eta \bullet I)\circ A\circ J}\\
(A\bullet A\bullet I)\circ (A\bullet I)\circ J
\ar[d]_-{\zeta \circ J}
\ar[r]^-{(A\bullet A\bullet I)\circ (A\bullet \tau)\circ J}
_-{\stackrel{\eqref{eq:coh}}=(A\bullet A\bullet I)\circ \zeta}
\ar@{}[rd]|-{\raisebox{-10pt}{${}_{\eqref{eq1.1}}$}}&
(A\bullet A\bullet I)\circ A\circ J\ar[d]^-\zeta\\
((A\circ A)\bullet A\bullet I)\circ J\ar[r]_-\zeta
\ar[d]_-{(\varepsilon.\mu\bullet A\bullet I)\circ J}&
(A\circ A\circ J)\bullet ((A\bullet I)\circ J)
\ar[d]^-{(\varepsilon.\mu\circ J)\bullet ((A\bullet I)\circ J)}\\
(A\bullet I)\circ J\ar[r]^-\zeta\ar@{=}[rd]^-{\eqref{eq1.2}}&
(J \circ J)\bullet ((A\bullet I)\circ J)
\ar[d]^-{\varpi \bullet ((A\bullet I)\circ J)}\\
& (A\bullet I)\circ J,}
$$
also $\phi'.\phi=\sqcap^L_\bullet \circ J$. Hence $\phi$ and $\phi'$
(co)restrict to the stated isomorphisms
$$
\xymatrix@C=10pt{
L_\bullet\circ J \ar[r]^-{\iota^L_\bullet\circ J}&
(A\bullet I)\circ J \ar[r]^-\phi&
A\circ J \ar[r]^-{\pi^L_\circ}&
L_\circ&
\textrm{and}&
L_\circ \ar[r]^-{\iota^L_\circ}&
A\circ J \ar[r]^-{\phi'}&
(A\bullet I)\circ J \ar[r]^-{\pi^L_\bullet\circ J}&
L_\bullet\circ J.}
$$

By a symmetric version of Lemma \ref{lem:R_bimod}, $L_\circ$ is an $A$-$J$
bimodule. Hence it is an $L_\bullet$-$J$ bimodule via restriction along the
monoid morphism
$\xymatrix@C=15pt{
L_\bullet\ar[r]^-{\iota^L_\bullet}&
A\bullet I \ar[r]^-{A\bullet \tau}&
A}$ (cf. a symmetric form of Lemma \ref{lem:comonoid_map})
. The morphisms $\phi$ and $\sqcap^L_\bullet \circ J$ are evidently right
$J$-module maps and so is $\sqcap^L_\circ$ by a symmetric version of Lemma
\ref{lem:R_coeq}. Hence the induced isomorphism $L_\bullet \circ J\to L_\circ$
is also a morphism of right $J$-modules. Since $L_\bullet \circ \pi^L_\bullet
\circ J$ is a (split) epimorphism, the isomorphism $L_\bullet \circ J
\to L_\circ$ is also a morphism of left $L_\bullet$-modules by
commutativity of
$$
\xymatrix@R=15pt{
L_\bullet \circ L_\bullet \circ J \ar[r]^-{L_\bullet\circ \iota^L_\bullet\circ J}
\ar[ddddd]_-{\mu^L_\bullet\circ J}&
L_\bullet \circ (A\bullet I) \circ J
\ar[r]^-{L_\bullet \circ (A\bullet \tau) \circ J}&
L_\bullet \circ A \circ J\ar[r]^-{L_\bullet \circ \pi^L_\circ}&
L_\bullet \circ L_\circ \ar[d]^-{\iota^L_\bullet \circ L_\circ}\\
\ar@{}[rddd]|-{(C)}
&L_\bullet \circ (A\bullet I) \circ J
\ar@{->>}[lu]_(.4){L_\bullet \circ \pi^L_\bullet \circ J}
\ar[r]^-{L_\bullet \circ (A\bullet \tau) \circ J}
\ar[d]^(.4){\iota^L_\bullet \circ (A\bullet I) \circ J}&
L_\bullet \circ A \circ J\ar@{->>}[ru]^(.4){L_\bullet \circ \pi^L_\circ}
\ar[d]_(.4){\iota^L_\bullet \circ A \circ J}&
(A\bullet I) \circ L_\circ\ar[d]^-{(A\bullet \tau) \circ L_\circ}\\
&(A\bullet I) \circ (A\bullet I) \circ J
\ar[r]^-{\raisebox{5pt}{${}_{(A\bullet I) \circ (A\bullet \tau) \circ J}$}}
\ar[d]^-{\zeta\circ J}
\ar@{}[rdd]|-{(B)}&
(A\bullet I) \circ A \circ J
\ar[d]_-{(A\bullet \tau) \circ A \circ J}&
A \circ L_\circ\ar[d]^-{A \circ \iota^L_\circ}\\
& ((A\circ A)\bullet I)\circ J\ar[d]^-{(\mu\bullet I)\circ J}&
A\circ A \circ J\ar@{->>}[ru]^(.4){A\circ \pi^L_\circ}
\ar[r]_-{A\circ \sqcap^L_\circ}\ar[d]_-{\mu\circ J}\ar@{}[rdd]|-{(A)}&
A\circ A \circ J\ar[d]^-{\mu\circ J}\\
& (A\bullet I) \circ J\ar@{->>}[ld]^(.3){\pi^L_\bullet \circ J}
\ar[r]^-{(A\bullet \tau) \circ J}&
A\circ J\ar@{->>}[rd]_(.3){\pi^L_\circ}&
A\circ J\ar[d]^-{\pi^L_\circ}\\
L_\bullet \circ J\ar[r]_-{\iota^L_\bullet \circ J}&
(A\bullet I) \circ J \ar[r]_-{(A\bullet \tau) \circ J}&
A\circ J \ar[r]_-{\pi^L_\circ}&
L_\circ .}
$$
The region marked by $(A)$ commutes by \eqref{eq:wbm_1.10}. The region marked
by $(B)$ commutes since $A\bullet \tau:A\bullet I\to A$ is a monoid morphism
(by a dual version of the reasoning in the proof of Lemma
\ref{lem:comonoid_map}). The region marked by $(C)$ commutes since
$\pi^L_\bullet$ is a morphism of left $L_\bullet$-modules by a symmetric
version of Lemma \ref{lem:pi_colinear}.
\end{proof}

\section{Hopf modules}\label{sec:rel_Hopf}

Hopf modules over a (weak) bialgebra are both modules and comodules with an
appropriate compatibility condition between the action and the coaction. The
compatibility condition is most conveniently formulated in terms of a `(weak)
mixed distributive law' (called a `(weak) entwining structure' in
\cite{CaeDeGr}). Our definition of Hopf modules over weak bimonoids in a
duoidal category in this section, uses this language. 

\begin{proposition}\label{prop:Hopf_d_law}
For any weak bimonoid $A$ in a duoidal category $(\mathcal M,\circ,\bullet)$,
there is a weak mixed distributive law between the induced monad $(-)\circ A$
and comonad $(-)\bullet A$ on $\mathcal M$.
\end{proposition}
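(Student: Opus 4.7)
The plan is to write down an explicit natural transformation
\[
\lambda_X \colon (X\bullet A)\circ A \longrightarrow (X\circ A)\bullet A
\]
and check that it satisfies the axioms of a weak mixed distributive law from the monad $T=(-)\circ A$ to the comonad $S=(-)\bullet A$. The obvious candidate, built by combining the duoidal interchange with the multiplication of the monoid and the comultiplication of the comonoid $A$, is
\[
\lambda_X \;=\; \bigl((X\circ A)\bullet \mu\bigr)\cdot \zeta_{X,A,A,A}\cdot\bigl((X\bullet A)\circ \Delta\bigr).
\]
Its naturality in $X$ is immediate from the naturality of $\zeta$ and the functoriality of both $\circ$ and $\bullet$.

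First I would verify the two strict compatibilities. The identity $\lambda\cdot\mu^T S=S\mu^T\cdot \lambda T\cdot T\lambda$ reduces, after unpacking the two composites $(X\bullet A)\circ A\circ A\to (X\circ A)\bullet A$, to a combination of the associativity of the monoid $A$, naturality of $\zeta$, functoriality of the two products, and the first associativity diagram in \eqref{eq1.1}. Dually, the identity $\Delta^S T\cdot\lambda=S\lambda\cdot \lambda S\cdot T\Delta^S$ follows from coassociativity of $\Delta$, the second diagram in \eqref{eq1.1}, naturality of $\zeta$, and the key multiplicativity/comultiplicativity compatibility \eqref{eq:WB} (which supplies the needed interchange between $\mu$ and $\Delta$). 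Neither of these two steps needs any `weak' axiom.

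The substantive content lies in the weakened unit and counit axioms, which govern the composites $\lambda\cdot T\eta^S$ and $S\varepsilon^S\cdot\lambda$. The weak unit axiom is obtained from $(\RRU)$: postcomposing both sides of $(\RRU)$ with $\tau\bullet A\bullet A\bullet A$ and absorbing the leftmost $I\bullet(-)$ leg via the unitality diagrams \eqref{eq1.2} and the coherence \eqref{eq:coh} rewrites $(\RRU)$ precisely as the required identity, computing $\lambda_X\cdot ((X\bullet \eta)\circ A)$ as the appropriate idempotent endomorphism of $X\bullet A$. The weak counit axiom is extracted symmetrically from $(\LRC)$: precomposing with $\tau\circ A\circ\eta\circ A$ in the style of Lemma~\ref{lem:kappa}(3), so that the morphism $\kappa_{X,A}$ of Lemma~\ref{lem:kappa} appears as the canonical decomposition of $\varpi\cdot(J\circ\varepsilon\cdot\mu)\cdot\zeta$ occurring in $\lambda$. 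Together these four compatibilities are the axioms of a weak mixed distributive law in the sense of \cite{CaeDeGr}.

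The main obstacle is the combinatorial bookkeeping that matches the above composites with the pre/postcomposed forms of $(\RRU)$ and $(\LRC)$; once the correct substitution of variables into these two axioms is identified, the remaining subdiagrams in the verification commute solely by naturality of $\zeta$, functoriality of the monoidal products, and the duoidal axioms \eqref{eq1.1}--\eqref{eq1.2}. The four further weak bimonoid axioms $(\LRU)$, $(\RLU)$, $(\LLU)$, $(\LLC)$, $(\RRC)$, $(\RLC)$ are not required at this stage: they will be needed when one establishes parallel weak mixed distributive laws for the other three weak bi(co)monads of Corollary~\ref{cor:wbms}, each obtained from $\lambda$ by one of the symmetries $\circ$, $\bullet$, $\ast$.
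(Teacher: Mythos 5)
Your candidate morphism and the overall division of the verification (two strict compatibilities, two weak ones, with the weak unit condition resting on $(\RRU)$) coincide with the paper's proof, but two of your reductions do not go through as stated, and one of them is a real gap. The paper obtains the comultiplication and counit compatibilities by applying the transformation $\ast$ to the two diagrams already proven: since $\psi$ is $\ast$-self-dual and \eqref{eq:WB} is $\ast$-invariant, this converts the $(\RRU)$-based unit verification into a counit verification based on $(\LLC)$. Your plan instead derives the weak counit condition from $(\LRC)$ via the recipe of Lemma \ref{lem:kappa}(3), and you explicitly list $(\LLC)$ among the axioms not needed. That has the wrong handedness. In the composite $((X\circ A)\bullet\varepsilon).\lambda_X=((X\circ A)\bullet\varepsilon.\mu).\zeta.((X\bullet A)\circ\Delta)$ the interchange sends the first leg of $\Delta$ into the left $\bullet$-factor (next to $X$) and $\varepsilon.\mu$ acts on the right $\bullet$-factor $A\circ A$; this is exactly the pattern of $(\LLC)$ (and of $\sqcap^R_\circ$ in \eqref{eq:sqcap^R}), whereas $\kappa_{X,Y}$ applies $\varepsilon.\mu$ to the left $\bullet$-factor $J\circ A\circ A$, which is why Lemma \ref{lem:kappa}(3) rests on $(\LRC)$ --- that machinery serves the weak bimonad axiom \cite[eq.~(1.4)]{G. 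Bohm2011} in Theorem \ref{thm:wbm}, not the entwining's counit condition. In the braided shadow the counit compatibility reads $b_1\,\varepsilon(ab_2)=1_1b\,\varepsilon(a1_2)$ and its proof uses the $(\LLC)$-shaped identity $\varepsilon(ab_2)\varepsilon(b_1c)=\varepsilon(abc)$; precomposing $(\LRC)$ with $\tau\circ A\circ\eta\circ A$ produces $\kappa$, not the required diagram. The repair is cheap: argue by $\ast$-symmetry, which also disposes of the comultiplication compatibility without a separate computation.

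Secondly, your reduction of the multiplication compatibility omits \eqref{eq:WB}, which is indispensable there: unwinding the two composites, one side produces $\Delta$ applied to a product and the other the product of the two $\Delta$'s (in shorthand, $(bc)_1\otimes a(bc)_2$ versus $b_1c_1\otimes ab_2c_2$), so associativity, naturality of $\zeta$ and \eqref{eq1.1} alone cannot close the diagram; the paper lists \eqref{eq:WB} as the first ingredient of precisely this step. Two smaller slips: the weak unit composite is $\lambda_X.((X\bullet A)\circ\eta)$, not $\lambda_X.((X\bullet\eta)\circ A)$, and the weak unit axiom is not literally an idempotent endomorphism of $X\bullet A$ (its target is $(X\circ A)\bullet A$; the associated idempotent $\chi^R_{Q,A}$ only appears after composing with a module action). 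These last points are cosmetic, but the $(\LRC)$-versus-$(\LLC)$ substitution is a step that would fail as described.
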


\begin{proof} We construct the desired weak mixed distributive law putting
$$\psi_M:\xymatrix@C=10pt {
(M\bullet A)\circ A \ar[rr]^-{(M\bullet A)\circ \Delta}
&& (M\bullet A)\circ (A\bullet A)\ar[r]^-{\zeta}
& (M\circ A)\bullet(A\circ A)\ar[rr]^-{(M\circ A)\bullet\mu}
&& (M\circ A)\bullet A,
}$$
for any object $M\in \mathcal M$.
The compatibility
$$
\xymatrix{
(M\bullet A)\circ A\circ A\ar[r]^-{\psi_M\circ A}
\ar[d]_-{(M\bullet A)\circ \mu}&
((M\circ A)\bullet A)\circ A\ar[r]^-{\psi_{M\circ A}}&
(M\circ A\circ A)\bullet A\ar[d]^-{(M\circ \mu)\bullet A}\\
(M\bullet A)\circ A \ar[rr]_-{\psi_M}&&
(M\circ A)\bullet A}
$$
with the multiplication follows from the first axiom \eqref{eq:WB} of weak
bimonoids, the associativity axioms \eqref{eq1.1} of a duoidal category and
associativity of the monoid $A$, together with the naturality of $\zeta$
and functoriality of both monoidal products. (This condition is proven
in the same way as in the non-weak case). The compatibility
$$
\xymatrix{
M\bullet A
\ar[r]^-{\raisebox{6pt}{${}_{M\bullet \Delta}$}}
\ar[d]_-{(M\bullet A)\circ \eta}&
M\bullet A \bullet A
\ar[r]^-{\raisebox{6pt}{${}_{((M\bullet A)\circ \eta) \bullet A}$}}&
((M\bullet A)\circ A) \bullet A
\ar[r]^-{\raisebox{6pt}{${}_{\psi_M\bullet A}$}}&
(M\circ A)\bullet A \bullet A
\ar[d]^-{(M\circ A)\bullet \varepsilon \bullet A}\\
(M\bullet A)\circ A\ar[rrr]_-{\psi_M}&&&
(M\circ A)\bullet A}
$$
with the unit is proven in the Appendix on page \pageref{page_wdl_unit}.
The compatibility conditions with the comultiplication and the counit are
obtained by applying the transformation $\ast$ to the above diagrams. Hence
they follow by symmetry.
\end{proof}

Applying the duality transformations $\bullet$, $\circ$ and their
composite to the weak distributive law $\psi$ in Proposition
\ref{prop:Hopf_d_law}, we obtain a four member family of weak distributive
laws between the various induced (co)monads. In the rest of the paper we
always work with $\psi$ in Proposition \ref{prop:Hopf_d_law} but certainly
symmetric considerations apply to all of its dual counterparts.

Over the weak mixed distributive law $\psi$ in Proposition
\ref{prop:Hopf_d_law}, we may consider the mixed modules
\cite{Bohm:WTM,BLS:2cat_wdl} in the following sense. 

\begin{definition}\label{def:Hopf_mod}
Mixed modules over the weak mixed distributive law $\psi$ in Proposition
\ref{prop:Hopf_d_law} are called {\em Hopf modules} over the weak bimonoid
$A$. Explicitly, this means an object $M$ in $\mathcal M$ which is both a
$(-)\circ A$-module with action $\gamma$ and a $(-)\bullet
A$-comodule with coaction $\varrho$ such that the following diagram
commutes.
\begin{equation}
\xymatrix @C=10pt{
M\circ A \ar[rrr]^{\gamma} \ar[d]_-{\varrho\circ A}
&&& M \ar[rrr]^-{\varrho}
&&& M\bullet A \\
(M\bullet A)\circ A\ar[rr]^-{(M\bullet A)\circ \Delta}
\ar@/_1.5pc/[rrrrrr]_-{\psi_M}&&
(M\bullet A)\circ(A\bullet A) \ar[rr]^-{\zeta}&&
(M\circ A)\bullet(A\circ A)\ar[rr]^-{(M\circ A)\circ \mu}&&
(M\circ A)\bullet A
 \ar[u]_-{\gamma\bullet A}}
\label{eq2.2}
\end{equation}
Morphisms of Hopf modules are morphisms of $(-)\bullet A$-comodules
and $(-)\circ A$-modules. The category of Hopf modules is
denoted by $\mathcal{M}^A_A$.
\end{definition}

By the weak bimonoid axiom \eqref{eq:WB}, $A$ is a Hopf module over itself via
the multiplication and the comultiplication. 

For any monad $t$ and comonad $c$ on the same category $\mathcal M$, it
follows by \cite[Corollary 5.11 and Proposition 6.3]{PowWat} that (non-weak)
mixed distributive laws $tc\to ct$ are in a bijective correspondence with
{\em liftings} $\overline c$ of the comonad $c$ to the Eilenberg-Moore
category $\mathcal M^t$ of the monad $t$. This means that the functor
$\overline c$ renders commutative the diagram 
\begin{equation}\label{eq:lift}
\xymatrix{
\mathcal M^t\ar[r]^-{\overline c}\ar[d]_-{u^t}&
\mathcal M^t\ar[d]^-{u^t}\\
\mathcal M\ar[r]_-c&
\mathcal M,}
\end{equation}
involving the forgetful functor $u^t$. Moreover, the comultiplication
$\overline \delta$ and the counit $\overline \varepsilon$ of the comonad
$\overline c$ are related to the comultiplication $\delta$ and the counit
$\varepsilon$ of $c$ via $u^t\overline \delta=\delta u^t$ and $u^t\overline
\varepsilon=\varepsilon u^t$.
 
In a similar way, by \cite[Proposition 5.7]{Bohm:WTM}, whenever idempotent
morphisms in $\mathcal M$ split, a weak mixed distributive law $tc\to ct$
determines a {\em weak lifting} $\overline c$ of $c$ to $\mathcal
M^t$. This means that the diagram in \eqref{eq:lift} does not need strictly
commute. Instead, there are natural transformations $\iota:u^t\overline c \to
c u^t$ and $\pi:c u^t \to u^t\overline c$ such that their composite
$\pi.\iota$ is the identity (they are obtained by splitting an idempotent
natural transformation canonically associated to the weak mixed distributive
law). Moreover, $(u^t,\iota)$ is a comonad morphism from $\overline c$ to $c$
in the sense of \cite{R. Street}. 

In the situation of Proposition \ref{prop:Hopf_d_law}, this means the
following. Associated to the weak mixed distributive law $\psi$ in Proposition
\ref{prop:Hopf_d_law}, for any $(-)\circ A$-module $(Q,\gamma)$
there is an idempotent morphism
$$
\xymatrix@C=45pt{
Q\bullet A \ar[r]^-{(Q\bullet A)\circ \eta}&
(Q\bullet A)\circ A\ar[r]^-{\psi_Q}&
(Q\circ A)\bullet A\ar[r]^-{\gamma\bullet A}&
Q\bullet A.}
$$
In light of the explicit form of $\psi_Q$, it is equal to $\chi^R_{Q,A}$ in
\eqref{chi}. So whenever idempotent morphisms in $\mathcal M$ split,
it splits through the $R_\circ$-module product $Q\bullet_{R_\circ} A$. 
This defines a comonad $(-)\bullet_{R_\circ} A$ on the category $\mathcal M_A$
of $(-)\circ A$-modules, which is the weak lifting of the comonad
$(-)\bullet A$ on $\mathcal M$, induced by the weak mixed distributive law
$\psi$ in Proposition \ref{prop:Hopf_d_law} via the correspondence in
\cite[Proposition 5.7]{Bohm:WTM}. As proven in \cite[Proposition
3.7]{Bohm:WTM}, the Eilenberg-Moore category of comodules over this weakly
lifted comonad $(-)\bullet_{R_\circ} A$ on $\mathcal M_A$, is isomorphic to
the category $\mathcal{M}^A_A$ of mixed modules over $\psi$. By these
considerations, whenever idempotent morphisms in $\mathcal M$ split, the
forgetful functor $\mathcal{M}^A_A\to \mathcal{M}_A$ is comonadic. 

\begin{proposition}\label{prop:rel_Hopf_lifting}
Let $A$ be a weak bimonoid in a duoidal category $(\mathcal
M,\circ,\bullet)$.
Then the functor $(-)\circ A:\mathcal M \to {\mathcal M}_A$
lifts to $\mathcal M^I\rightarrow \mathcal M^A_A$ (where $I$ stands for the
$\circ$-monoidal unit regarded as a comonoid in $(\mathcal M,\bullet)$, and
$\mathcal M^I$ denotes the category of $(-)\bullet
I$-comodules). That is, there is a functor $\mathcal M^I\rightarrow
\mathcal M^A_A$ rendering commutative the diagram 
$$
\xymatrix{
\mathcal M^I\ar@{-->}[r]\ar[d]&
\mathcal M^A_A\ar[d]\\
\mathcal M\ar[r]_-{(-)\circ A}&
\mathcal M_A,}
$$
in which the vertical arrows denote the forgetful functors.
\end{proposition}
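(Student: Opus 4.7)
The plan is to define, for each right $I$-comodule $(M,\rho)$ in $\mathcal M^I$, a Hopf module structure on $M\circ A$ whose right $A$-action is the free one, $M\circ \mu$, and whose right $A$-coaction is
\[
\varrho_M\colon M\circ A \xrightarrow{\rho\circ \Delta} (M\bullet I)\circ(A\bullet A) \xrightarrow{\zeta} (M\circ A)\bullet(I\circ A) \ \cong\ (M\circ A)\bullet A,
\]
the last identification being the unit isomorphism for $\circ$. A morphism $f\colon (M,\rho_M)\to (N,\rho_N)$ of $I$-comodules will be sent to $f\circ A$. By construction, the square in the statement then commutes on the nose, since both routes assign to $(M,\rho)$ the object $M\circ A$ with its free $(-)\circ A$-action.

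The first things to verify are that $\varrho_M$ is coassociative and counital. Coassociativity should reduce, using naturality of $\zeta$ and functoriality of both monoidal products, to the coassociativity of $\rho$ and of $\Delta$ together with one of the associativity axioms \eqref{eq1.1}. Counitality should reduce analogously to the counitality of $\rho$ with respect to $\tau\colon I\to J$, the counitality of $\Delta$, and a unitality axiom in \eqref{eq1.2}.

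The main step is the mixed distributivity \eqref{eq2.2}. With $Q = M\circ A$, $\gamma = M\circ \mu$ and $\varrho = \varrho_M$, functoriality of $\circ$ rewrites the left hand side of \eqref{eq2.2} as $\zeta.(\rho\circ(\Delta.\mu))$. Applying the weak bimonoid axiom \eqref{eq:WB} to replace $\Delta.\mu$ by $(\mu\bullet\mu).\zeta.(\Delta\circ\Delta)$, and then using naturality of $\zeta$, this becomes
\[
((M\circ\mu)\bullet\mu).\zeta.((M\bullet I)\circ\zeta).(\rho\circ(\Delta\circ\Delta)).
\]
Expanding the right hand side of \eqref{eq2.2} using the definition of $\psi_{M\circ A}$ and functoriality of $\circ$ yields
\[
((M\circ\mu)\bullet\mu).\zeta.(\zeta\circ(A\bullet A)).(\rho\circ(\Delta\circ\Delta)),
\]
and the first associativity axiom in \eqref{eq1.1} identifies the two. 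This is the step I expect to be the main obstacle. It is worth noting, as in the non-weak case, that none of the auxiliary weak axioms $(\RRU)$, $(\LRU)$, $(\LLC)$, $(\LRC)$ intervene here; only \eqref{eq:WB} is used, reflecting the familiar fact that free modules over an ordinary bimonoid are canonically Hopf modules.

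Functoriality on morphisms is then routine: $f\circ A$ is tautologically right $A$-linear, and its right $A$-colinearity follows from the $I$-colinearity of $f$ together with naturality of $\zeta$. Together with the commutativity of the square observed above, this yields the desired lifting of $(-)\circ A$.
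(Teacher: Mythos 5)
Your construction coincides exactly with the paper's proof: the same free action $M\circ\mu$ and coaction $\zeta.(\rho\circ\Delta)$ (up to the unitor $I\circ A\cong A$), with the compatibility \eqref{eq2.2} verified, as in the paper, from \eqref{eq:WB}, the first associativity axiom in \eqref{eq1.1}, naturality of $\zeta$ and functoriality of $\circ$, and indeed none of the weak axioms $(\RRU),(\LRU),(\LLC),(\LRC)$ enter. One minor correction to your sketch: coassociativity of the coaction also requires a unitality axiom from \eqref{eq1.2} (to absorb the comultiplication $\delta:I\to I\bullet I$ produced by coassociativity of $\rho$), not only an associativity axiom in \eqref{eq1.1}, exactly as the paper records.
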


\begin{proof}
The object map of the desired functor is provided by the action and the
coaction
$$
\xymatrix{
Z\circ A \circ A \ar[r]^-{Z\circ \mu}&Z\circ A&&
Z\circ A \ar[r]^-{\varrho \circ \Delta}&
(Z\bullet I) \circ (A \bullet A)\ar[r]^-\zeta&
(Z\circ A) \bullet A,}
$$
for any $(-)\bullet I$-comodule $(Z,\varrho)$. The action is associative and
unital by the associativity and unitality of the multiplication. The coaction
is coassociative by coassociativity of $\Delta$ and $\varrho$ and the axioms
\eqref{eq1.1} and \eqref{eq1.2}, together with the naturality of $\zeta$. It
is counital by \eqref{eq:coh}, the counitality of $\Delta$ and $\varrho$, and
the naturality of $\zeta$. The compatibility condition \eqref{eq2.2} follows
by the weak bimonoid axiom \eqref{eq:WB}, an associativity axiom in
\eqref{eq1.1}, and the naturality of $\zeta$ and functoriality of the monoidal
product $\circ$. Obviously, for any $(-)\bullet I$-comodule morphism $f$,
$f\circ A$ becomes a morphism of Hopf modules with respect to these actions
and coactions.
\end{proof}

Whenever idempotent morphisms in $\mathcal M$ split --- hence $\mathcal M^A_A$
is isomorphic to the Eilenberg-Moore category of the comonad
$(-)\bullet_{R_\circ} A$ on $\mathcal M_A$ --- it follows by \cite[Corollary
 5.11]{PowWat} that the lifted functor in Proposition
\ref{prop:rel_Hopf_lifting} corresponds to a comonad morphism (in the sense of
\cite{R. Street}), $\lambda^0:((-)\bullet I)\circ A \to ((-)\circ 
A) \bullet_{R_\circ} A$, between functors $M\to M_A$. Applying a dual
form of \cite[Lemma 3.6]{Bohm:WTM} --- yielding the explicit correspondence
between the $A$-Hopf modules and the comodules of the comonad
$(-)\bullet_{R_\circ} A$ on $\mathcal M_A$ ---, for any object $M$ of
$\mathcal M$ the explicit form of $\lambda^0_M$ comes out as 
\begin{equation}\label{eq:lambda^0}
\lambda^0_M:\xymatrix{
(M\bullet I)\circ A\ar[r]^-{(M\bullet I)\circ \Delta}&
(M\bullet I)\circ (A\bullet A)\ar[r]^-\zeta&
(M \circ A)\bullet A\ar[r]^-{\pi^R_{M\circ A,A}}&
(M\circ A)\bullet_{R_\circ} A.}
\end{equation}
In other words, $\lambda^0_M$ is the unique morphism for which
$$
\xymatrix@C=65pt{
(M\bullet I)\circ A\ar@{-->}[r]^-{\lambda^0_M}
\ar[d]_-{(M\bullet I)\circ \Delta}&
(M\circ A) \bullet_{R_\circ} A\ar[d]^-{\iota^R_{M\circ A,A}}\\
(M\bullet I)\circ (A\bullet A)\ar[r]_-\zeta&
(M\circ A) \bullet A}
$$
commutes.
Both definitions of $\lambda^0_M$ are equivalent since
$$
\xymatrix{
(M\bullet I)\circ A \ar[r]^-{(M\bullet I)\circ \Delta}\ar@{=}[d]&
(M\bullet I)\circ (A\bullet A)\ar[r]^-\zeta&
(M\circ A)\bullet A\ar[d]^-{\chi^R_{(M\circ A),A}}\\
(M\bullet I)\circ A \ar[r]_-{(M\bullet I)\circ \Delta}&
(M\bullet I)\circ (A\bullet A)\ar[r]_-\zeta&
(M\circ A)\bullet A}
$$
commutes by \eqref{eq1.1}, \eqref{eq:WB} and unitality of the monoid $A$,
together with the naturality of $\zeta$ and functoriality of $\circ$. 
As in \cite[Proposition 1.1]{GomTor}, the comonad morphism $\lambda^0$
determines a ({\em `Galois-type'}\,) comonad morphism 
$$
\xymatrix@C=40pt{
\beta^0_Q:(Q\bullet I)\circ A\ar[r]^-{\lambda^0_Q}&
(Q\circ A) \bullet_{R_\circ} A\ar[r]^-{\gamma\bullet_{R_\circ} A}&
Q\bullet_{R_\circ} A,}
$$
for any right $A$-module $(Q,\gamma)$. It is in fact the unique morphism for
which
$$
\xymatrix{
(Q\bullet I)\circ A\ar@{-->}[rr]^-{\beta^0_Q}
\ar[d]_-{(Q\bullet I)\circ \Delta}&&
Q\bullet_{R_\circ} A\ar[d]^-{\iota^R_{Q,A}}\\
(Q\bullet I)\circ(A\bullet A)\ar[r]_-\zeta&
(Q\circ A) \bullet A\ar[r]_-{\gamma\bullet A}&
Q\bullet A}
$$
commutes. 

Regard the symmetric monoidal category of vector spaces as a duoidal
category. Clearly, its idempotent morphisms split. A weak bimonoid $A$ in this
duoidal category is the usual notion of weak bialgebra in
\cite{BSz,Nill:WBA,BNSz:WHAI}. By \cite[Section 36.16]{BrzWis}, the
fundamental theorem of Hopf modules holds for $A$ if and only if it is a weak
Hopf algebra. In this case, the fundamental theorem asserts that a certain
comparison functor, from the category of modules over the `left' or `target'
algebra to the category of $A$-Hopf modules, is an equivalence. In order to
obtain the generalization of this comparison functor in our setting of duoidal
categories, it is not enough to consider the lifted functor in Proposition
\ref{prop:rel_Hopf_lifting}, we need another lifting.

If $A$ is a weak bimonoid in a duoidal category $(\mathcal M,\circ,\bullet)$
in which idempotent morphisms split, then it follows by a symmetric version of
Lemma \ref{lem:R_coeq} that $\overline L_\bullet$ (and hence by Proposition
\ref{prop:bar_base}~(1) also the isomorphic object $L_\bullet$ with which we
identified it) fits the equalizer
\begin{equation}\label{eq:L_eq}
\xymatrix{
L_\bullet \ar[r]^-{\iota^L_\bullet}&
A\bullet I \ar@<2pt>[rrrrr]^-{\Delta\bullet I}
\ar@<-2pt>[rrrrr]_-{\vartheta^L:=(\mu\bullet A\bullet I).(\zeta\bullet I).
(((A\bullet I)\circ \Delta.\eta)\bullet I).(A\bullet \delta)}&&&&&
A\bullet A\bullet I}
\end{equation}
in $\mathcal M^I$.

We know from (a symmetric counterpart of) Lemma \ref{lem:R_J-sFmonoid} that
$L_\bullet$ is a separable Frobenius monoid in $\mathcal M^I$. In particular,
$L_\bullet$ is a monoid in $\mathcal M^I$, so we can take its category of
modules ${\mathcal M}^I_{L_\bullet}$. Equivalently, ${\mathcal
M}^I_{L_\bullet}$ is the category of comodules over the comonad
$(-)\bullet I$ on ${\mathcal M}_{L_\bullet}$. Thus it comes equipped
with a forgetful functor $V^I:{\mathcal M}^I_{L_\bullet}\to {\mathcal
M}_{L_\bullet}$, with right adjoint $(-)\bullet I$.

By a symmetric version of Lemma \ref{lem:comonoid_map}, there is a monoid
morphism $\xymatrix@C=10pt{\omega:
L_\bullet\ar[r]^-{\iota^L_\bullet}&
A\bullet I \ar[r]^-{A\bullet \tau}&
A}$ in $(\mathcal M, \circ)$. It induces a left $L_\bullet$-action on any left
$A$-module and hence it induces in particular a left $L_\bullet$-action
$\alpha$ on $A$. For any right $L_\bullet$-module $(P,\xi)$, the
$L_\bullet$-module tensor product $P\circ_{L_\bullet} A$ is defined ---
if it exists --- as the coequalizer
\begin{equation}\label{eq:pi_L}
\xymatrix@C=45pt{
P\circ L_\bullet \circ A \ar@<2pt>[r]^-{\xi \circ A}
\ar@<-2pt>[r]_-{P \circ \alpha}&
P\circ A\ar[r]^-{\pi^L_{P,A}}&
P\circ_{L_\bullet} A}
\end{equation}
in $\mathcal M$.
Recall (e.g. from \cite[Section 3.1]{C. Pastro}) that, since $L_\bullet$ is a
separable Frobenius monoid (in $(\mathcal M,\circ)$), $P\circ_{L_\bullet} A$
can be obtained also by splitting the idempotent morphism 
\begin{equation}\label{eq:chi_L}
\xymatrix@C=45pt{
P\circ A \ar[r]^-{P\circ \Delta^L_\bullet .
\eta^L_\bullet \circ A}&
P\circ L_\bullet \circ L_\bullet \circ A \ar[r]^-{\xi \circ \alpha}&
P\circ A }
\end{equation}
as $\xymatrix@C=30pt{P\circ A\ar@{->>}[r]|-{\,\pi^L_{P,A}}&
P\circ_{L_\bullet} A\ar@{>->}[r]|-{\iota^L_{P,A}}&P\circ A.}$
Hence by the assumption that idempotent morphisms in $\mathcal M$ split, the
$L_\bullet$-module tensor product $P\circ_{L_\bullet} A$ exists, and it is
preserved by any functor. This defines a functor
$\omega^*:=(-)\circ_{L_\bullet} A: {\mathcal M}_{L_\bullet}\to
{\mathcal M}_A$. It is left adjoint of the functor $\omega_*:{\mathcal M}_A \to
{\mathcal M}_{L_\bullet}$ defined by regarding any $A$ module as an
$L_\bullet$-module via $\omega$ (see e.g. \cite{Par}).
 
\begin{lemma}\label{lem:lambda_fork}
Let $A$ be a weak bimonoid in a duoidal category $(\mathcal M,\circ, \bullet)$
whose idempotent morphisms split. Then for any right $L_\bullet$-module
$(P,\xi)$, 
$$
\xymatrix@C=35pt{
(P\bullet I)\circ {L_\bullet}\circ A
\ar@<2pt>[r]^-{\gamma \circ A}
\ar@<-2pt>[r]_-{(P\bullet I)\circ\alpha}
& (P\bullet I)\circ A
\ar[rr]^-{(\pi^L_{P,A}\bullet_{R_\circ} A).\lambda^0_P}&&
(P\circ_{{L_\bullet}} A)\bullet_{R_\circ} A}
$$
is a fork, where $\lambda^0$ is the comonad morphism in \eqref{eq:lambda^0},
$\alpha=\mu.(\omega\circ A)$ is the $L_\bullet$-action on $A$ induced by
the monoid morphism $\omega=(A\bullet \tau).\iota^L_\bullet
:L_\bullet \to A$ and $\gamma$ is the ${L_\bullet}$-action on $P\bullet I$,
given in terms of the $I$-coaction $\varrho:L_\bullet \to L_\bullet \bullet I$
from a dual form of Lemma \ref{lem:R_bimod} as
\begin{equation}\label{eq:gamma}
\xymatrix@C=12pt{
(P\bullet I)\circ {L_\bullet}\ar[rr]^-{(P \bullet I)\circ \varrho}
&& (P\bullet I)\circ ({L_\bullet}\bullet I)\ar[r]^-\zeta
& (P\circ {L_\bullet})\bullet I\ar[r]^-{\xi\bullet I}
& P\bullet I.}
\end{equation}
\end{lemma}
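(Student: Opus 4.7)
The plan is to reduce the fork equation to an identity in $(P\circ_{L_\bullet}A)\bullet A$ by postcomposing with the split monomorphism $\iota^R_{P\circ_{L_\bullet}A,A}$. Since $\pi^L_{P,A}$ is a morphism of right $A$-modules (the $L_\bullet$-action on $A$ commutes with multiplication, so the coequalizer \eqref{eq:pi_L} lives in $\mathcal M_A$), naturality of $\iota^R$ in its first argument along $\pi^L_{P,A}$, together with the defining square of $\lambda^0_P$ displayed right after \eqref{eq:lambda^0}, gives
$$
\iota^R_{P\circ_{L_\bullet}A,A}.(\pi^L_{P,A}\bullet_{R_\circ}A).\lambda^0_P \ =\ (\pi^L_{P,A}\bullet A).\zeta.((P\bullet I)\circ\Delta).
$$
So it suffices to show
$$
(\pi^L_{P,A}\bullet A).\zeta.((P\bullet I)\circ\Delta).(\gamma\circ A)\ =\ (\pi^L_{P,A}\bullet A).\zeta.((P\bullet I)\circ\Delta.\alpha).
$$

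For the left-hand side, I would unfold $\gamma=(\xi\bullet I).\zeta.((P\bullet I)\circ\varrho)$ from \eqref{eq:gamma}, and by naturality of $\zeta$ in its first argument pull the factor $\xi\circ A$ past $\zeta$. Combined with $\pi^L_{P,A}\bullet A$ this produces a factor $(\pi^L_{P,A}.(\xi\circ A))\bullet A$, which by the coequalizer property of $\pi^L_{P,A}$ from \eqref{eq:pi_L} equals $(\pi^L_{P,A}.(P\circ\alpha))\bullet A$. For the right-hand side I would invoke the weak bimonoid axiom \eqref{eq:WB} to write $\Delta.\alpha = (\mu\bullet\mu).\zeta.((\Delta.\omega)\circ\Delta)$, and then use the equalizer \eqref{eq:L_eq} (namely $(\Delta\bullet I).\iota^L_\bullet=\vartheta^L.\iota^L_\bullet$), together with the coherence \eqref{eq:coh} to absorb the unit $\tau:I\to J$, to replace $\Delta.\omega=\Delta.(A\bullet\tau).\iota^L_\bullet$ by a composite of $\mu$, $\Delta.\eta$, $\delta$ and $\zeta$ that is precisely the pattern defining the $I$-coaction $\varrho$ on $L_\bullet$ via the dual of Lemma \ref{lem:R_bimod}.

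After these substitutions both sides are built from $\pi^L_{P,A}$, $\alpha$, $\mu$, $\Delta$, $\varrho$, $\xi$ and iterated $\zeta$'s in the same configuration, and a routine chase using the associativity conditions \eqref{eq1.1}, naturality of $\zeta$ and functoriality of both monoidal products identifies them. The main obstacle is purely combinatorial bookkeeping of the many interchange maps; no weak bimonoid axiom beyond \eqref{eq:WB} seems to be needed here, since $(\RRU),(\LRU),(\LLC),(\LRC)$ have already been consumed in constructing the separable Frobenius structure of $L_\bullet$ and the equalizer description \eqref{eq:L_eq}. This suggests that the core step is best presented as a full-page appendix diagram in the style of those already used for Lemma \ref{lem:kappa}(3) and Lemma \ref{lem:mu_colinear}.
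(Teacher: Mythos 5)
Your reduction step is sound and matches the paper's appendix diagram: postcomposing with the split monomorphism $\iota^R_{P\circ_{L_\bullet}A,A}$, using that $\pi^L_{P,A}$ is a morphism of $(-)\circ A$-modules together with naturality of $\chi^R$ and the remark after \eqref{eq:lambda^0}, does reduce the fork to
$(\pi^L_{P,A}\bullet A).\zeta.((P\bullet I)\circ \Delta).(\gamma\circ A)=(\pi^L_{P,A}\bullet A).\zeta.((P\bullet I)\circ \Delta.\alpha)$,
and your treatment of the left-hand side (unfolding $\gamma$ via \eqref{eq:gamma}, pulling $\xi$ through $\zeta$ by naturality, and trading $\xi$ for $\alpha$ by the coequalizer \eqref{eq:pi_L}) is exactly what the corresponding regions of the diagram on page \pageref{page_lambda_fork} do. After an application of \eqref{eq1.1} and naturality, what remains is precisely the identity
$\Delta.\alpha=(\alpha\bullet A).\zeta.(\varrho\circ(A\bullet A)).(L_\bullet\circ\Delta)$,
i.e.\ that $\Delta:A\to A\bullet A$ is a morphism of left $L_\bullet$-modules.

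This is where your proposal has a genuine gap. The paper does not re-derive this identity inside the present proof: it is quoted as the symmetric version of Lemma \ref{lem:mu_colinear}, whose own proof occupies a further full-page diagram and uses, besides \eqref{eq:WB} and the (co)equalizer property (your \eqref{eq:L_eq}), also the compatibility of the splitting morphisms with the relevant (co)module structures --- the analogues of regions (A) and (C) there, i.e.\ the second explicit form of $\vartheta$ together with (co)unitality, and the statement that $\sqcap$ (hence $\pi$, $\iota$) is a morphism of $J$-modules, resp.\ $I$-comodules, coming from \eqref{eq:sqcap_alt} and Lemmas \ref{lem:pi_colinear}, \ref{lem:R_coeq}. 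Your plan to finish with ``only \eqref{eq1.1}, naturality of $\zeta$ and functoriality'' cannot work as stated: once you expand $\alpha=\mu.(\omega\circ A)$ with $\omega=(A\bullet\tau).\iota^L_\bullet$ and $\varrho=(\pi^L_\bullet\bullet I).(A\bullet\delta).\iota^L_\bullet$ and pull $\pi^L_\bullet$ through $\zeta$, the composite $\iota^L_\bullet.\pi^L_\bullet=\sqcap^L_\bullet$ appears (as $(A\bullet\tau).\sqcap^L_\bullet$ inside the first $\bullet$-factor), and this idempotent is not absorbed by naturality, by \eqref{eq:coh}, nor by another use of \eqref{eq:pi_L}; eliminating it is exactly what the colinearity of the splitting (the symmetric Lemma \ref{lem:pi_colinear}/\ref{lem:R_coeq} statements, equivalently identities of the type \eqref{eq:wbm_1.10}) is for. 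Moreover, $\varrho$ is not ``a composite of $\mu$, $\Delta.\eta$, $\delta$ and $\zeta$'': it is defined through the splitting of $\sqcap^L_\bullet$, so matching it against the expression obtained from \eqref{eq:WB} and \eqref{eq:L_eq} necessarily invokes those splitting compatibilities. So either cite the symmetric version of Lemma \ref{lem:mu_colinear} at this point, as the paper does, or add the missing inputs to your chase; as written, the ``routine bookkeeping'' step is the actual content of the lemma and is not justified.
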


\begin{proof}
The proof can be found in the Appendix, on page \pageref{page_lambda_fork}. In
the diagram on page \pageref{page_lambda_fork}, the region marked by ``Lemma
\ref{lem:mu_colinear}'' commutes since by a symmetric version of Lemma
\ref{lem:mu_colinear}, $\Delta:A \to A\bullet A$ is a morphism of left
$\overline L_\bullet\cong L_\bullet$-modules (for the isomorphism see Proposition
\ref{prop:bar_base}).
\end{proof}

\begin{proposition} \label{prop:lambda}
Let $A$ be a weak bimonoid in a duoidal category $(\mathcal M,\circ,\bullet)$
in which idempotent morphisms split. Then the functor
$\omega^*=(-)\circ_{L_\bullet} A: {\mathcal M}_{L_\bullet}\to {\mathcal M}_A$
above lifts to ${\mathcal M}^I_{L_\bullet}\to {\mathcal M}_A^A$. That
is, there is a functor ${\mathcal M}^I_{L_\bullet}\to {\mathcal M}_A^A$,
rendering commutative the diagram
$$
\xymatrix{
{\mathcal M}^I_{L_\bullet}\ar@{-->}[r]\ar[d]& 
{\mathcal M}_A^A\ar[d]\\
{\mathcal M}_{L_\bullet}\ar[r]_-{\omega^*}& 
{\mathcal M}_A,}
$$
in which the vertical arrows denote the (comonadic) forgetful functors.
\end{proposition}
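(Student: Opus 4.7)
The plan is to produce, for each object $(P,\xi,\varrho_P)$ of ${\mathcal M}^I_{L_\bullet}$, a coaction on $P\circ_{L_\bullet} A$ by exploiting Lemma \ref{lem:lambda_fork}, and then to check the relevant comodule axioms. The key preliminary observation is that the compatibility between the $L_\bullet$-action $\xi$ and the $I$-coaction $\varrho_P$ which defines an object of ${\mathcal M}^I_{L_\bullet}$ is precisely the assertion that $\varrho_P\colon P\to P\bullet I$ is a morphism of right $L_\bullet$-modules, where $P\bullet I$ carries the action $\gamma$ of \eqref{eq:gamma}. Indeed, unfolding the $\circ$-monoidal structure on $(-)\bullet I$-comodules (in which $L_\bullet$ is a monoid), $I$-colinearity of $\xi$ reads precisely as $\varrho_P.\xi=\gamma.(\varrho_P\circ L_\bullet)$, after using the naturality of $\zeta$.

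First I would define the candidate coaction $\overline\varrho$. By the above $L_\bullet$-linearity of $\varrho_P$ together with functoriality of $\circ$, precomposing the pair $\xi\circ A,\ P\circ\alpha\colon P\circ L_\bullet\circ A\to P\circ A$ of \eqref{eq:pi_L} with $\varrho_P\circ A$ on the target yields the parallel pair $\gamma\circ A,\ (P\bullet I)\circ\alpha$ of Lemma \ref{lem:lambda_fork} precomposed with $\varrho_P\circ L_\bullet\circ A$. Consequently the composite
$$
\xymatrix@C=18pt{
P\circ A\ar[r]^-{\varrho_P\circ A}&
(P\bullet I)\circ A\ar[r]^-{\lambda^0_P}&
(P\circ A)\bullet_{R_\circ} A\ar[rr]^-{\pi^L_{P,A}\bullet_{R_\circ} A}&&
(P\circ_{L_\bullet} A)\bullet_{R_\circ} A}
$$
coequalizes $\xi\circ A$ and $P\circ\alpha$, so the coequalizer property of $\pi^L_{P,A}$ yields a unique morphism $\overline\varrho\colon P\circ_{L_\bullet} A\to (P\circ_{L_\bullet} A)\bullet_{R_\circ} A$ with $\overline\varrho.\pi^L_{P,A}=(\pi^L_{P,A}\bullet_{R_\circ} A).\lambda^0_P.(\varrho_P\circ A)$. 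Each morphism on the right is $A$-linear (in particular $\lambda^0_P$, by its origin in the lifting of Proposition \ref{prop:rel_Hopf_lifting}), so $\overline\varrho$ is a morphism of right $A$-modules.

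Next I would verify that $\overline\varrho$ is coassociative and counital with respect to the weakly lifted comonad $(-)\bullet_{R_\circ} A$ on ${\mathcal M}_A$. Since $\pi^L_{P,A}$ is a split epimorphism, both identities can be checked after precomposition with it, whereupon they reduce, via the defining relation of $\overline\varrho$, to the comonad morphism axioms for $\lambda^0$ combined with the coassociativity and counitality of $\varrho_P$. This places $(P\circ_{L_\bullet} A,\overline\varrho)$ into the Eilenberg-Moore category of $(-)\bullet_{R_\circ} A$ on ${\mathcal M}_A$, i.e.\ into ${\mathcal M}^A_A$. Functoriality in morphisms of ${\mathcal M}^I_{L_\bullet}$ follows from the universal property of $\pi^L_{P,A}$ together with the naturality of $\lambda^0$ and of $\varrho_{(-)}$.

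The main obstacle will be the coassociativity check: once $\overline\varrho.\pi^L_{P,A}$ is expanded, one must manipulate a diagram involving $\lambda^0$, the comultiplication of $A$, the idempotent splittings $\pi^R$ and $\pi^L$, and the coequalizer $\pi^L_{P,A}$. The key input is the comonad morphism property of $\lambda^0$, implicit in the discussion preceding \eqref{eq:lambda^0}; combining this with the duoidal associativity axioms \eqref{eq1.1} and the coassociativity of $\varrho_P$ should yield the required identity. Counitality and functoriality are shorter computations of the same flavor.
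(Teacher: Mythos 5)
Your proposal is correct and takes essentially the same route as the paper: it rests on the same ingredients --- the comonad morphism $\lambda^0$, Lemma \ref{lem:lambda_fork}, and the universal property of the coequalizer \eqref{eq:pi_L} --- and your coaction $\overline\varrho$ on $P\circ_{L_\bullet}A$ agrees with the one the paper obtains, namely $\lambda_P.(\varrho_P\circ_{L_\bullet}A)$ with $\lambda_P$ induced as in \eqref{eq2.8}. The only difference is presentational: the paper assembles the data into a natural comonad morphism $\lambda$ and invokes the bijection between comonad morphisms and liftings (\cite[Corollary 5.11]{PowWat}), while you inline that bijection objectwise, deducing coassociativity and counitality of $\overline\varrho$ directly from the comonad-morphism compatibilities of $\lambda^0$ together with those of $\varrho_P$.
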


\begin{proof}
The category $\mathcal M^A_A$ of Hopf modules is isomorphic to the
Eilenberg-Moore category of comodules over the comonad $(-)\bullet_{R_\circ}
A$ on $\mathcal M_A$. Hence using the bijection between liftings and
comonad morphisms (see e.g. \cite[Corollary 5.11]{PowWat}), the proof amounts
to constructing a morphism $\lambda_P: (P\bullet I)\circ_{L_\bullet} A \to (P
\circ_{{L_\bullet}} A)\bullet_{R_\circ} A$ in ${\mathcal M}_A$, for any right
${L_\bullet}$-module $(P,\xi)$; and showing that $\lambda$ is in fact a
comonad morphism (in the sense of \cite{R. Street}).
Using the notation in Lemma \ref{lem:lambda_fork}, in the diagram
\begin{equation}
\xymatrix@C=35pt{
(P\bullet I)\circ {L_\bullet}\circ A
\ar@<2pt>[r]^-{\gamma \circ A}
\ar@<-2pt>[r]_-{(P\bullet I)\circ\alpha}
& (P\bullet I)\circ A\ar[d]_-{\lambda_P^0}\ar[r]^-{\pi^L_{P\bullet I,A}}
& (P\bullet I)\circ_{L_\bullet}A \ar@{-->}@/1.5pc/[d]^-{\lambda_P}\\
& (P\circ A)\bullet_{R_\circ} A\ar[r]^-{\pi^L_{P,A}\bullet_{R_\circ} A}
&(P\circ_{{L_\bullet}} A)\bullet_{R_\circ} A}
\label{eq2.8}
\end{equation}
the top row is a coequalizer as in \eqref{eq:pi_L}. 
By Lemma \ref{lem:lambda_fork},
$(\pi^L_{P,A}\bullet_{R_\circ} A).\lambda_P^0$ coequalizes the parallel
morphisms in the top row.
So by universality, the (unique) morphism $\lambda_P$ exists.
Since $\lambda^0$ and $\pi^L$ are natural, so is $\lambda$ by its
construction in \eqref{eq2.8}.
We know from Proposition \ref{prop:rel_Hopf_lifting} that $\lambda_P^0$ is a
morphism of $(-)\circ A$-modules, hence so is
$(\pi^L_{P,A}\bullet_{R_\circ} A).\lambda_P^0$. Since \eqref{eq:chi_L} is a
morphism of $(-)\circ A$-modules (with respect to the action given by
multiplication in the last factor), so is $\pi^L_{P,A}$ for any
$L_\bullet$-module $P$. From these and \eqref{eq2.8} we conclude that
$\lambda_P$ is a morphism of $(-)\circ A$-modules.
The compatibilities of $\lambda$ with the comultiplications and the
counits of the comonads $(-)\bullet I$ on ${\mathcal M}_{{L_\bullet}}$ and
$(-)\bullet_{R_\circ} A$ on ${\mathcal M}_A$ follow by naturality of $\pi^L$ in
its first argument, functoriality of the product $\bullet_{R_\circ}$
and the compatibilities of $\lambda^0$, which are consequences of
Proposition \ref{prop:rel_Hopf_lifting}. 
\end{proof}

Corresponding (as in \cite[Proposition 1.1]{GomTor}) to the
comonad morphism $\lambda$ in Proposition \ref{prop:lambda}, there is a
`Galois-type' morphism of comonads 
\begin{equation}\label{eq:beta}
\xymatrix@C=40pt{
\beta_Q:(Q\bullet I)\circ_{L_\bullet} A\ar[r]^-{\lambda_Q}&
(Q \circ_{L_\bullet} A)\bullet_{R_\circ} A
\ar[r]^-{\overline \gamma\bullet_{R_\circ} A}&
Q \bullet_{R_\circ} A,}
\end{equation}
for any right $A$-module $(Q,\gamma)$, where $\overline \gamma$ is the unique
morphism for which $\overline \gamma.\pi^L_{Q,A}=\gamma$.
This is the unique morphism rendering commutative
$$
\xymatrix{
(Q\bullet I)\circ_{L_\bullet} A\ar@{-->}[rrr]^-{\beta_Q}&&&
Q \bullet_{R_\circ} A\ar[d]^-{\iota^R_{Q,A}}\\
(Q\bullet I)\circ A
\ar[u]^-{\pi^L_{Q\bullet I,A}}\ar[r]_-{(Q\bullet I)\circ \Delta}&
(Q\bullet I)\circ (A\bullet A)\ar[r]_-\zeta&
(Q \circ A)\bullet A\ar[r]_-{\gamma \bullet A}&
Q\bullet A.}
$$

\section{The Fundamental Theorem of Hopf modules}\label{sec:fthm}

By Proposition \ref{prop:lambda}, there is a functor $(-)\circ_{L_\bullet} A:
\mathcal M^I_{L_\bullet}\to \mathcal M^A_A$, for any weak bimonoid $A$ in a
duoidal category $(\mathcal M,\circ,\bullet)$ in which idempotent morphisms
split. The aim of this section is to investigate when this is an equivalence;
that is, when the fundamental theorem of Hopf modules holds. In the case of a
weak bialgebra $A$ over a field, it is known to be the case if and only if $A$
is a weak Hopf algebra (see \cite[Section 36.16]{BrzWis}). 

Let $(\mathcal M,\circ,\bullet)$ be a duoidal category in which idempotent
morphisms split and let $A$ be a bimonoid in it. In the diagram
$$
\xymatrix@C=45pt{
&&\mathcal M^A_A\ar[d]\\
\mathcal M^I_{L_\bullet}\ar[r]_-{V^I}\ar[rru]^-{(-)\circ_{L_\bullet} A}&
\mathcal M_{L_\bullet}\ar[r]_-{\omega^*=(-)\circ_{L_\bullet} A}&
\mathcal M_A,
}
$$
the vertical arrow denotes the comonadic forgetful functor and the functor in
the bottom row --- to be denoted by $Y$ --- is left adjoint. Hence the
diagonal functor $(-)\circ_{L_\bullet} A: \mathcal M^I_{L_\bullet}\to \mathcal
M^A_A$ is an equivalence if and only if \eqref{eq:beta} is a natural
isomorphism and $Y=(-)\circ_{L_\bullet} A: \mathcal M^I_{L_\bullet}\to
\mathcal M_A$ is comonadic (see e.g. \cite[Theorem 2.7]{GomTor} or
\cite[Theorem 1.10]{B. Mesablishvili}). By this motivation, applying methods
in \cite{B. Mesablishvili}, we turn to studying when $Y$ is comonadic.

Regard the Kleisli category $\widetilde{\mathcal M}_A$ of the monad
$\omega_\ast \omega^\ast=(-)\circ_{L_\bullet} A$ on $\mathcal
M_{L_\bullet}$ as a full subcategory (of free modules) in the Eilenberg-Moore
category of $\omega_\ast \omega^\ast$; which is isomorphic to $\mathcal
M_A$. The functor $Y=(-)\circ_{L_\bullet} A: \mathcal
M^I_{L_\bullet}\to \mathcal M_A$ factorizes through $\widetilde
Y:=(-)\circ_{L_\bullet} A: \mathcal M^I_{L_\bullet}\to \widetilde{\mathcal
M}_A$ via the fully faithful embedding $\widetilde{\mathcal M}_A\to
{\mathcal M}_A$. 

By a symmetric version of Lemma \ref{lem:R_bimod}, $L_\circ$ carries the
structure of an $A$-$J$ bimodule in $(\mathcal M,\circ)$. For any object
$P\circ_{L_\bullet} A$ of $\widetilde {\mathcal M}_A$ and any left $A$-module
$Q$, there exists the coequalizer $P\circ_{L_\bullet} A \circ_A Q\cong
P\circ_{L_\bullet} Q$, see e.g. \cite[Remark 2.4]{B. Mesablishvili}
(where $Q$ is a left $L_\bullet$-module via the action induced by the monoid
morphism $\omega=(A\bullet \tau).\iota^L_\bullet:L_\bullet \to A$, cf. a
symmetric version of Lemma \ref{lem:comonoid_map}). 
Thus for any right $L_\bullet$-module $P$, there is a right $J$-module
$P\circ_{L_\bullet} A \circ_A L_\circ \cong P\circ_{L_\bullet} L_\circ$. By
Proposition \ref{prop:bimodiso}, $L_\circ\cong L_\bullet \circ J$ as
$L_\bullet$-$J$ bimodules. Therefore
$$
P\circ_{L_\bullet} A \circ_A L_\circ \cong
P\circ_{L_\bullet} L_\circ\cong
P\circ_{L_\bullet} L_\bullet \circ J\cong
P \circ J,
$$
resulting in a commutative (up-to natural isomorphism) diagram
\begin{equation}\label{eq:HW}
\xymatrix@C=35pt{
\mathcal M^I_{L_\bullet} \ar[r]^-{V^I} \ar[d]_-{V_{L_\bullet}}
& \mathcal M_{L_\bullet}\ar[d]^-{U_{L_\bullet}}\ar[r]^-{(-)\circ_{L_\bullet} A}
& \widetilde{\mathcal M}_A \ar[d]^-{(-)\circ_A L_\circ}\\
\mathcal M^I\ar[r]_-{U^I}&\mathcal M\ar[r]_-{(-)\circ J}& \mathcal M_J.}
\end{equation}

\begin{proposition}\label{prop:MW}
Let $(\mathcal M,\circ,\bullet)$ be a duoidal category in which
idempotent morphisms split and the functor
$
\xymatrix{
\mathcal{M}^I \ar[r]^-{U^I}
& \mathcal{M}\ar[r]^-{(-)\circ J}
& \mathcal{M}_J}
$
in the bottom row of \eqref{eq:HW} is separable (in the sense of
\cite{NaVdBVO}). Then for any weak bimonoid $A$ in $\mathcal M$, the functor
$\widetilde Y=(-)\circ_{L_\bullet} A: \mathcal M^I_{L_\bullet}\to
\widetilde{\mathcal M}_A$ in the top row of \eqref{eq:HW} obeys the following
properties.
\begin{itemize}
\item[{(1)}] $\widetilde Y$ reflects isomorphisms.
\item[{(2)}] Any $\widetilde Y$-contractible pair of morphisms (in the dual
 sense of \cite[page 93, Section 3.3]{M. Barr}) possesses a contractible
 (hence absolute) equalizer in $\mathcal M^I_{L_\bullet}$.
\end{itemize}
\end{proposition}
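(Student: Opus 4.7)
The plan is to deduce both (1) and (2) from the Mesablishvili--Wisbauer result mentioned in the introduction, by transporting the hypothesized separability along the commutative diagram \eqref{eq:HW}. That diagram gives the identity of functors $\mathcal{M}^I_{L_\bullet}\to\mathcal{M}_J$
\[
((-)\circ_A L_\circ)\circ \widetilde Y \;=\; \Sigma \circ V_{L_\bullet},
\qquad \Sigma := ((-)\circ J)\circ U^I,
\]
so it suffices to show that $\Sigma \circ V_{L_\bullet}$ is a separable left adjoint and then apply Mesablishvili--Wisbauer to the factorization on the left with $\widetilde Y$ as the first factor.

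First I would argue that $V_{L_\bullet}:\mathcal{M}^I_{L_\bullet}\to\mathcal{M}^I$ is itself a separable left adjoint. By a symmetric version of Lemma~\ref{lem:R_J-sFmonoid}, $L_\bullet$ is a separable Frobenius monoid in $\mathcal{M}^I$. The standard argument (using that $\Delta^L_\bullet$ is a section of $\mu^L_\bullet$ in the bimodule category) shows that the forgetful functor from modules over a separable monoid is always separable, giving separability of $V_{L_\bullet}$; and the Frobenius structure ensures that this same forgetful functor is simultaneously a left and a right adjoint. The functor $\Sigma$ is separable by the hypothesis of the proposition and is already a composite of left adjoints (the free-$J$-module functor $(-)\circ J$, and the comonadic functor $U^I$, which is left adjoint to the cofree comodule functor $(-)\bullet I$). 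Since separability and left-adjointness are both closed under composition, $\Sigma \circ V_{L_\bullet}$ is a separable left adjoint.

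I would then invoke the Mesablishvili--Wisbauer theorem in the form: whenever $F=GH$ is a factorization of a separable left adjoint $F$ whose source category has split idempotents, the first factor $H$ reflects isomorphisms and every $H$-contractible pair possesses a contractible equalizer in that source. Applied to $H=\widetilde Y$ and $G=(-)\circ_A L_\circ$, this yields both (1) and (2). The only remaining hypothesis to verify is that $\mathcal{M}^I_{L_\bullet}$ has split idempotents, which follows because split idempotents pass from $\mathcal M$ to the Eilenberg--Moore category of any (co)monad on $\mathcal M$ (define the induced structure on the retract by conjugating the action or coaction with the splitting morphisms, and check (co)unit and (co)associativity directly); iterating this twice gives the claim for $\mathcal{M}^I_{L_\bullet}$.

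The main difficulty is conceptual rather than computational: namely, recognizing that the proposition is a direct application of Mesablishvili--Wisbauer's general result, and correctly checking that the combined functor $\Sigma\circ V_{L_\bullet}$ inherits both separability (from the hypothesis together with the separability of $L_\bullet$) and left-adjointness (from the Frobenius structure of $L_\bullet$). Once these hypotheses are packaged correctly and the commutativity of \eqref{eq:HW} is used, the two assertions follow essentially immediately from the cited theorem.
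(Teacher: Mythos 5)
Your proposal is correct and follows essentially the same route as the paper: show that the down-then-right path of \eqref{eq:HW} is a separable left adjoint (using the separable Frobenius structure of $L_\bullet$ in $\mathcal M^I$ for $V_{L_\bullet}$, the stated separability hypothesis, and the adjunctions for $U^I$ and $(-)\circ J$), note that idempotents split in $\mathcal M^I_{L_\bullet}$, and apply \cite[Proposition 1.13]{B. Mesablishvili} to the factorization through $\widetilde Y$. The only (immaterial) deviation is that you obtain separability of $V_{L_\bullet}$ via Rafael's criterion for the free--forgetful adjunction using the bimodule section $\Delta^L_\bullet$ of $\mu^L_\bullet$, whereas the paper applies Rafael's theorem to the adjunction $V_{L_\bullet}\dashv(-)\circ L_\bullet$ coming from the Frobenius structure.
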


\begin{proof}
By a symmetric version of Lemma \ref{lem:R_J-sFmonoid}, $L_\bullet$ is a
Frobenius monoid in $\mathcal M^I$. Applying the results about Frobenius
monads in \cite[Lemma 1.3 and Proposition 1.4]{Street:Frob}, both of
its category of modules, and its category of comodules are isomorphic to
$\mathcal M^I_{L_\bullet}$; and the forgetful functor $V_{L_\bullet}:\mathcal
M^I_{L_\bullet}\to \mathcal M^I$ possesses isomorphic left and right adjoints
$(-)\circ L_\bullet$. Moreover, since $L_\bullet$ is a separable comonoid in
$\mathcal M^I$, the forgetful functor $V_{L_\bullet}:\mathcal
M^I_{L_\bullet}\to \mathcal M^I$ is separable by Rafael's theorem
\cite{Rafael}: The unit of the adjunction $V_{L_\bullet} \dashv (-)\circ
L_\bullet$ is given by the $L_\bullet$-coaction $\varrho:X\to X \circ
L_\bullet$ at any object $X$ of $\mathcal M^I_{L_\bullet}$. It has a natural
retraction given by the $L_\bullet$-action 
$\xymatrix@C=15pt{
X\circ L_\bullet
\ar[r]^-{\varrho \circ L_\bullet}&
X\circ L_\bullet\circ L_\bullet
\ar[rr]^-{X \circ \varepsilon^L_\bullet.
\mu^L_\bullet}&&
X}$ on $X$.

The forgetful functor $U^I:\mathcal M^I \to \mathcal M$ has a right adjoint
$(-)\bullet I: \mathcal M \to \mathcal M^I$ and $(-)\circ J: \mathcal M \to
\mathcal M_J$ is left adjoint of the forgetful functor $U_J:\mathcal M_J \to
\mathcal M$. Thus the down-then-right path in \eqref{eq:HW} is a composite of
left adjoint separable functors, hence it is a separable and left adjoint
functor. Since idempotent morphisms are assumed to split in $\mathcal M$, they
also split in $\mathcal M^I_{L_\bullet}$. Thus the claims follow by
\cite[Proposition 1.13]{B. Mesablishvili}. (In fact, in \cite[Proposition
1.13]{B. Mesablishvili} part (2) of our proposition appears in a slightly
different form. There the claim is stated about {\em coreflexive} $\widetilde
Y$-contractible {\em equalizer} pairs; though these additional assumptions are
not used in their proof.) 
\end{proof}

With these preparations, the following `fundamental theorem of Hopf modules'
is obtained.

\begin{theorem}\label{thm:fthm}
Let $(\mathcal M,\circ,\bullet)$ be a duoidal category in which idempotent
morphisms split and the functor
$\xymatrix@C=15pt{
\mathcal{M}^I \ar[r]^-{U^I}
& \mathcal{M}\ar[r]^-{(-)\circ J}
& \mathcal{M}_J}$
is separable. Let $A$ be a weak bimonoid in $\mathcal M$. The corresponding
Galois morphism (\ref{eq:beta}) is an isomorphism if and only if the functor
$(-)\circ_{L_\bullet} A:\mathcal M^I_{L_\bullet} \to \mathcal M^A_A$ is an
equivalence. 
\end{theorem}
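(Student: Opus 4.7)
The plan is to invoke the general characterization \cite[Theorem 1.10]{B. Mesablishvili} (see also \cite[Theorem 2.7]{GomTor}): a functor that lifts through a comonadic forgetful functor is an equivalence if and only if the associated Galois-type comonad morphism is a natural isomorphism and the underlying composite functor is comonadic. In our setting, write $Y := (-)\circ_{L_\bullet} A : \mathcal M^I_{L_\bullet} \to \mathcal M_A$ for the composite $\omega^\ast \circ V^I$. Proposition \ref{prop:lambda} exhibits the desired lift $\mathcal M^I_{L_\bullet} \to \mathcal M^A_A$ through the comonadic forgetful functor $\mathcal M^A_A \to \mathcal M_A$, and by construction its associated Galois morphism is precisely the $\beta$ of (\ref{eq:beta}). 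Hence the theorem is reduced to showing that, under the stated separability hypothesis, the functor $Y$ is comonadic \emph{unconditionally}, i.e.\ independently of whether $\beta$ is invertible.

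To obtain this unconditional comonadicity, I would factor $Y$ as $Y = E \circ \widetilde Y$ through the Kleisli category of the monad $\omega_\ast \omega^\ast$ on $\mathcal M_{L_\bullet}$, where $E : \widetilde{\mathcal M}_A \hookrightarrow \mathcal M_A$ is the fully faithful Kleisli embedding and $\widetilde Y : \mathcal M^I_{L_\bullet} \to \widetilde{\mathcal M}_A$ is the corestriction of $Y$. Note that $Y$ is a left adjoint (being a composite of the left adjoints $V^I$ and $\omega^\ast$) and that idempotents split in $\mathcal M^I_{L_\bullet}$ because they do so in $\mathcal M$. By Proposition \ref{prop:MW}, $\widetilde Y$ reflects isomorphisms and sends every $\widetilde Y$-contractible pair to one whose equalizer exists and is contractible in $\mathcal M^I_{L_\bullet}$. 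The Mesablishvili--Wisbauer result \cite[Proposition 1.13 and Theorem 1.10]{B. Mesablishvili} assembles these two properties into the statement that $Y$ is comonadic.

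The main obstacle is already absorbed into Proposition \ref{prop:MW}, whose proof routes the hypothesis that $(-)\circ J \circ U^I$ be separable through the Frobenius structure of $L_\bullet$ in $\mathcal M^I$ (a symmetric version of Lemma \ref{lem:R_J-sFmonoid}), making $V_{L_\bullet}$ self-adjoint and separable via Rafael's theorem, so that the down-then-right composite of diagram (\ref{eq:HW}) is a separable left adjoint. Granted that input, the remainder of the proof is simply the bookkeeping above: Proposition \ref{prop:lambda} produces the lift, Proposition \ref{prop:MW} together with the Mesablishvili--Wisbauer criterion delivers the unconditional comonadicity of $Y$, and the characterization theorem converts the question of whether $(-)\circ_{L_\bullet} A : \mathcal M^I_{L_\bullet} \to \mathcal M^A_A$ is an equivalence into the question of whether $\beta$ is an isomorphism.
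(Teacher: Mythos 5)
Your proposal is correct and follows essentially the same route as the paper: the reduction via \cite[Theorem 2.7]{GomTor} (or \cite[Theorem 1.10]{B. Mesablishvili}) to proving unconditional comonadicity of $Y=(-)\circ_{L_\bullet}A:\mathcal M^I_{L_\bullet}\to\mathcal M_A$, the factorization $Y=E.\widetilde Y$ through the Kleisli category $\widetilde{\mathcal M}_A$, the left-adjointness of $Y$, and Proposition \ref{prop:MW} as the key input are exactly the paper's steps. The only cosmetic difference is the final assembly: where you defer to Mesablishvili--Wisbauer, the paper carries it out explicitly via the dual form of Beck's theorem \cite{J. Beck} (see \cite[Theorem 3.14]{M. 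Barr}), using that idempotent morphisms split in $\mathcal M_A$ to identify $Y$-contractible equalizer pairs with $\widetilde Y$-contractible pairs, whose equalizers are absolute and hence created by $Y$.
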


\begin{proof}
We use the reasoning applied in \cite[Proposition 3.7]{B. Mesablishvili} in
the non-weak case, to show that $Y=(-)\circ_{L_\bullet} A: \mathcal
M^I_{L_\bullet}\to {\mathcal M}_A$ is comonadic.

Since $\widetilde Y=(-)\circ_{L_\bullet} A: \mathcal M^I_{L_\bullet}\to
\widetilde{\mathcal M}_A$ reflects isomorphisms by Proposition \ref{prop:MW},
clearly so does $Y=(-)\circ_{L_\bullet} A: \mathcal M^I_{L_\bullet}\to
{\mathcal M}_A$ differing from $\widetilde Y$ by the fully faithful embedding
$\widetilde{\mathcal M}_A \to {\mathcal M}_A$.

Since idempotent morphisms in $\mathcal M$ split, they also split in
$\mathcal M_A$. So in this case a $Y$-contractible equalizer pair (in the dual
sense of \cite[page 94]{M. Barr}) is simply a $Y$-contractible pair (in the
dual sense of \cite[page 93, Section 3.3]{M. Barr}). Using again that $Y$ and
$\widetilde Y$ differ by the fully faithful embedding $\widetilde{\mathcal
 M}_A \to {\mathcal M}_A$, we conclude that a $Y$-contractible pair is the
same as a $\widetilde Y$-contractible pair. By Proposition \ref{prop:MW},
$\widetilde Y$-contractible pairs (i.e. $Y$-contractible equalizer pairs)
possess absolute equalizers. This proves that $Y$ creates equalizers of
$Y$-contractible equalizer pairs.

Since there are adjunctions
$$
\xymatrix@C=40pt{
\mathcal M^I_{L_\bullet}\ar@/^1pc/[r]^-{V^I}\ar@{}[r]|-\perp&
\mathcal M_{L_\bullet}\ar@/^1pc/[l]^-{(-)\bullet I}
\ar@/^1pc/[r]^-{\omega^*=(-)\circ_{_{L_\bullet}} A} \ar@{}[r]|-\perp&
\mathcal M_A,\ar@/^1pc/[l]^-{\omega_*}}
$$
$Y$ (occurring in the top row) possesses a right adjoint. So it follows by
the dual form of Beck's theorem \cite{J. Beck} (see \cite[page 100, Theorem
3.14]{M. Barr}) that $Y$ is comonadic. Since $(-)\circ_{L_\bullet} A: \mathcal
M^I_{L_\bullet}\to \mathcal M^A_A$ is an equivalence if and only if
\eqref{eq:beta} is a natural isomorphism and $Y$ is comonadic (see
e.g. \cite[Theorem 2.7]{GomTor} or \cite[Theorem 1.10]{B. Mesablishvili}),
this completes the proof. 
\end{proof}

Applying all possible composites of the duality transformations $\bullet$,
$\circ$ and $\ast$ in Section \ref{sec:axioms}, we obtain seven further
symmetric variants of the equivalent conditions in Theorem \ref{thm:fthm}. In
a braided monoidal category --- regarded as a duoidal category --- whose
idempotent morphisms split, half of them reduce to the usual definition of
weak Hopf monoid in \cite{C. Pastro,AlAlatal}; and half of them reduce to the
condition that the opposite weak bimonoid is a weak Hopf monoid. 
\vfill
\eject

\begin{landscape}
\textwidth33cm
\pagestyle{plain}

\section*{Appendix: Some diagrammatic proofs}
\bigskip

{\white .}\hspace{-3cm} {\em Proof of Lemma \ref{lem:kappa} (3).}
\bigskip

\scalebox{.95}{
$$\label{page_lem:kappa}
{\white .}\hspace{-3.6cm}\xymatrix@C=10pt{
((J\circ A)\bullet X)\circ (A\bullet Y)
\ar[r]^-{\raisebox{7pt}{${}_
{((J\circ A\circ \Delta.\eta)\bullet X)\circ (A\bullet Y)}$}}
\ar[ddddd]^-\zeta&
((J\circ A\circ (A\bullet A))\bullet X)\circ (A\bullet Y)
\ar[dd]^-{((J\circ \zeta)\bullet X)\circ (A\bullet Y)}
\ar@{=}[rrr]
\ar[rd]^-{\qquad ((\zeta \circ (A\bullet A))\bullet X)\circ (A\bullet Y)}&&&
((J\circ A\circ (A\bullet A))\bullet X)\circ (A\bullet Y)
\ar[dd]_-{(\zeta\bullet X)\circ (A\bullet Y)}\\
&&
((((J\circ A)\bullet (J\circ J))\circ (A\bullet A))\bullet X)
\circ (A\bullet Y)
\ar[rru]^-{((((J\circ A)\bullet \varpi)\circ (A\bullet A))\bullet X)
\circ (A\bullet Y)\qquad \ \ }
\ar[d]^-{(\zeta\bullet X)\circ (A\bullet Y)}
\ar@{}[u]|-{\eqref{eq1.2}}\\
&
((J\circ ((A\circ A)\bullet (J\circ A)))\bullet X)\circ (A\bullet Y)
\ar[r]^-{\raisebox{7pt}{${}_{(\zeta\bullet X)\circ (A\bullet Y)}$}}
\ar[d]^-{((J\circ (\varepsilon.\mu\bullet (J\circ A)))\bullet X)
\circ (A\bullet Y)}
\ar@{}[ruu]|(.4){\eqref{eq1.1}}&
((J\circ A\circ A)\bullet (J\circ J \circ A)\bullet X)\circ (A\bullet Y)
\ar[d]^-{((J\circ \varepsilon.\mu)\bullet (J\circ J \circ A)\bullet X)
\circ (A\bullet Y)}&&
((J\circ A\circ A)\bullet (J\circ A)\bullet X)\circ (A\bullet Y)
\ar[d]_-{((J\circ \varepsilon.\mu)\bullet (J\circ A)\bullet X)
\circ (A\bullet Y)}\\
&
((J\circ J \circ A)\bullet X)\circ (A\bullet Y)
\ar[r]^-{(\zeta\bullet X)\circ (A\bullet Y)}\ar@{=}[rd]&
((J\circ J)\bullet (J\circ J \circ A)\bullet X)\circ (A\bullet Y)
\ar[d]^-{(\varpi\bullet (J\circ J \circ A)\bullet X)\circ (A\bullet Y)}
\ar@{}[ld]|(.3){\eqref{eq1.2}}&&
((J\circ J)\bullet (J\circ A)\bullet X)\circ (A\bullet Y)
\ar[d]_-{(\varpi \bullet (J\circ A)\bullet X)\circ (A\bullet Y)}\\
&&
((J\circ J \circ A)\bullet X)\circ (A\bullet Y)
\ar[rr]^-{((\varpi \circ A)\bullet X)\circ (A\bullet Y)}&&
((J\circ A)\bullet X)\circ (A\bullet Y)\ar[d]_-\zeta\\
(J\circ A \circ A)\bullet (X\circ Y)
\ar[r]^-{\raisebox{7pt}{${}_
{(J\circ A \circ \Delta.\eta\circ A)\bullet (X\circ Y)}$}}
\ar[dd]^-{(J\circ \mu)\bullet (X\circ Y)}
\ar@{}[rrrdd]|-{(\LRC)}&
(J\circ A \circ (A \bullet A) \circ A)\bullet (X\circ Y)
\ar[r]^-{(J\circ \zeta \circ A)\bullet (X\circ Y)}&
(J\circ ((A\circ A)\bullet (J\circ A))\circ A)\bullet (X\circ Y)
\ar[r]^-{\raisebox{7pt}{${}_
{(J\circ (\varepsilon.\mu\bullet (J\circ A))\circ A)\bullet (X\circ Y)}$}}&
(J\circ J\circ A \circ A)\bullet (X\circ Y)
\ar[r]^-{\raisebox{7pt}{${}_{(\varpi\circ A \circ A)\bullet (X\circ Y)}$}}
\ar[d]^-{(J\circ J\circ \varepsilon.\mu)\bullet (X\circ Y)}&
(J\circ A \circ A)\bullet (X\circ Y)
\ar[d]_-{(J\circ \varepsilon.\mu)\bullet (X\circ Y)}\\
&&&
(J\circ J \circ J)\bullet (X\circ Y)
\ar[r]^-{(\varpi \circ J)\bullet (X\circ Y)}
\ar[d]^-{(J\circ \varpi)\bullet (X\circ Y)}&
(J\circ J)\bullet (X\circ Y)
\ar[d]_-{\varpi\bullet (X\circ Y)}\\
(J\circ A)\bullet (X\circ Y)
\ar[rrr]_-{(J\circ \varepsilon)\bullet (X\circ Y)}&&&
(J\circ J)\bullet (X\circ Y)\ar[r]_-{\varpi\bullet (X\circ Y)}&
X\circ Y
}
$$}

\eject

{\white .}\hspace{-3cm} {\em Proof of Theorem \ref{thm:wbm}, verification of
 \cite[eq. (1.5)]{G. Bohm2011}.}
\vspace{1cm}

\scalebox{.85}{
$$\label{page_1.5}
{\white .}\hspace{-4cm}
\xymatrix@C=10pt@R=40pt{
X\bullet ((Y\bullet Z)\circ (A\bullet A))
\ar[rr]^-{X\bullet \zeta}&&
X\bullet (Y\circ A)\bullet (Z\circ A)
\ar[r]^-{\raisebox{7pt}{${}_{
((X\bullet (Y\circ A))\circ \Delta.\eta)\bullet(Z\circ A)}$}}&
((X\bullet (Y\circ A))\circ (A\bullet A))\bullet(Z\circ A)
\ar[rr]^-{\zeta \bullet (Z\circ A)}&&
(X\circ A)\bullet (Y\circ A\circ A)\bullet (Z\circ A)
\ar[dddd]_-{(X\circ A)\bullet (Y\circ \mu)\bullet (Z\circ A)}\\
&
(X\bullet Y \bullet Z)\circ (I\bullet A \bullet A)\ar[r]^-\zeta
\ar[lu]_-\zeta\ar@{=}[d]\ar@{}[ru]|-{\eqref{eq1.1}}&
((X\bullet Y)\circ (I\bullet A))\bullet (Z\circ A)
\ar[u]_-{\zeta \bullet (Z\circ A)}
\ar[r]^-{\raisebox{7pt}{${}_{
((X\bullet Y)\circ (I\bullet A)\circ \Delta.\eta)\bullet(Z\circ A)}$}}&
((X\bullet Y)\circ (I\bullet A)\circ (A\bullet A))\bullet (Z\circ A)
\ar[u]_-{(\zeta \circ (A\bullet A))\bullet (Z\circ A)}
\ar[r]^-{\raisebox{7pt}{${}_{
((X\bullet Y)\circ \zeta)\circ (Z\bullet A)}$}}\ar@{}[rru]|-{\eqref{eq1.1}}&
((X\bullet Y)\circ (A\bullet (A\circ A)))\bullet (Z\circ A)
\ar[ru]_-{\zeta \bullet (Z\circ A)}
\ar[ddd]_-{((X\bullet Y)\circ (A\bullet \mu))\bullet (Z\circ A)}\\
X\bullet Y \bullet Z\ar@{=}[dd]^-{\qquad \eqref{eq1.2}}
\ar[uu]_-{X\bullet ((Y\bullet Z)\circ \Delta.\eta)}&
(X\bullet Y \bullet Z)\circ (I\bullet A \bullet A)
\ar[r]^-{\raisebox{7pt}{${}_{(X\bullet Y \bullet Z)\circ (((I\bullet A)
\circ \Delta.\eta)\bullet A)}$}}\ar@{}[rrdd]|-{(\RRU)}&
(X\bullet Y \bullet Z)\circ(((I\bullet A)\circ (A\bullet A))\bullet A)
\ar[r]^-{\raisebox{7pt}{${}_{(X\bullet Y \bullet Z)\circ(\zeta\bullet A)}$}}&
(X\bullet Y \bullet Z)\circ (A\bullet (A\circ A)\bullet A)
\ar[d]_-{(X\bullet Y \bullet Z)\circ (A\bullet \mu \bullet A)}\\
&
(X\bullet Y \bullet Z)\circ (I\bullet I)
\ar[u]_-{(X\bullet Y \bullet Z)\circ (I\bullet \Delta.\eta)}
\ar[lu]_-\zeta
&&
(X\bullet Y \bullet Z)\circ (A \bullet A \bullet A)
\ar[dr]^-\zeta
\\
X\bullet Y \bullet Z
\ar[r]_-{(X\bullet Y \bullet Z)\circ \eta}
\ar[ru]_-{(X\bullet Y \bullet Z)\circ \delta}&
(X\bullet Y \bullet Z)\circ A
\ar[r]_-{(X\bullet Y \bullet Z)\circ \Delta}
&
(X\bullet Y \bullet Z)\circ (A\bullet A)
\ar[ru]^-{(X\bullet Y \bullet Z)\circ (\Delta\bullet A)\qquad }
\ar[r]_-\zeta&
((X\bullet Y)\circ A)\bullet (Z\circ A)
\ar[r]_-{((X\bullet Y)\circ \Delta)\bullet (Z\circ A)}&
((X\bullet Y) \circ (A\bullet A))\bullet (Z\circ A)
\ar[r]_-{\zeta \bullet (Z\circ A)}&
(X\circ A)\bullet (Y\circ A)\bullet (Z\circ A)}
$$}
\eject

{\white .}\hspace{-3cm} {\em Proof of Theorem \ref{thm:wbm}, verification of
 \cite[eq. (1.4)]{G. Bohm2011}.}
\vspace{1cm}

\scalebox{.85}{
$$\label{page_1.4}
{\white .}\hspace{-4.3cm}
\xymatrix@C=10pt{
((J\circ A)\bullet X)\circ A
\ar[r]^-{((J\circ A)\bullet X)\circ \Delta}
\ar@{=}[ddd]&
((J\circ A)\bullet X)\circ (A\bullet A)
\ar[r]^-\zeta
\ar@{=}[rd]_(.3){\eqref{eq1.2}\quad}
\ar@/_2pc/[rrrr]_-{\kappa_{X,A}}
\ar[d]_-{((J\circ A)\bullet X)\circ \delta \circ (A\bullet A)}&
(J\circ A\circ A)\bullet (X\circ A)
\ar[rr]^-{(J\circ \varepsilon.\mu)\bullet (X\circ A)}&&
(J\circ J)\bullet (X\circ A)
\ar[r]^-{\varpi \bullet (X\circ A)}&
X\circ A\\
&
((J\circ A)\bullet X)\circ (I\bullet I)\circ (A\bullet A)
\ar[r]^-{\zeta \circ (A\bullet A)}
\ar[d]_-{((J\circ A)\bullet X)\circ (\Delta.\eta\bullet I)\circ (A\bullet A)}&
((J\circ A)\bullet X)\circ (A\bullet A)
\ar[d]^-{((J\circ A\circ \Delta.\eta)\bullet X)\circ (A\bullet A)}&&&
(J\circ J)\bullet (X\circ A)
\ar[u]^-{\varpi\bullet (X\circ A)}\\
&
((J\circ A)\bullet X)\circ (A\bullet A\bullet I)\circ (A\bullet A)
\ar[r]^-{\zeta \circ (A\bullet A)}
\ar[rrrddd]^-{\kappa_{X,A\bullet I}\circ (A\bullet A)}&
((J\circ A\circ (A\bullet A))\bullet X)\circ (A\bullet A)
\ar[rr]^-{(\kappa_{J,A}\bullet X)\circ (A\bullet A)}&
\ar@{}[uu]|(.3){\textrm{Lemma\ \ref{lem:kappa}\ } (3)}
\ar@{}[dd]|-{\textrm{Lemma\ \ref{lem:kappa}\ } (2)}&
((J\circ A)\bullet X)\circ (A\bullet A)
\ar[r]^-\zeta
\ar@/^1pc/[ruu]^-{\kappa_{X,A}}
\ar@{}[dddr]|-{\eqref{eq1.1}}&
(J\circ A\circ A)\bullet (X\circ A)
\ar[u]^-{(J\circ \varepsilon.\mu)\bullet (X\circ A)}\\
((J\circ A)\bullet X)\circ A
\ar[r]^-{((J\circ A)\bullet X)\circ \delta \circ A}
\ar[ddddd]^-{((J\circ A)\bullet X)\circ \Delta.\eta\circ A}
\ar@{}[rdddd]|-{(\LRU)}&
((J\circ A)\bullet X)\circ (I\bullet I)\circ A
\ar[d]^-{((J\circ A)\bullet X)\circ (\Delta.\eta\bullet I)\circ A}
&&\\
&
((J\circ A)\bullet X)\circ (A\bullet A\bullet I)\circ A
\ar[r]^-{\kappa_{X,A\bullet I}\circ A}
\ar[dd]^-{((J\circ A)\bullet X)\circ
(A\bullet ((A\bullet I)\circ \Delta.\eta))\circ A}
\ar@{}[rddd]|-{\textrm{Lemma\ \ref{lem:kappa}\ } (1)}&
X\circ (A\bullet I)\circ A
\ar[d]^-{X\circ (A\bullet I)\circ \eta\circ A}
\ar@{=}[rd]
&&\\
&&
X\circ (A\bullet I)\circ A \circ A
\ar[r]^-{X\circ (A\bullet I)\circ \mu}
\ar[d]^-{X\circ (A\bullet I)\circ \Delta \circ A}
\ar@{}[rrd]|-{\eqref{eq:WB}}&
X\circ (A\bullet I)\circ A
\ar[r]^-{X\circ (A\bullet I)\circ \Delta}&
X\circ (A\bullet I)\circ (A\bullet A)
\ar[r]^-{X\circ \zeta}
\ar[uuu]^-{\zeta\circ (A\bullet A)}&
X\circ ((A\circ A)\bullet A)
\ar@{=}[dd]
\ar[uuu]_(.4)\zeta\\
&
((J\circ A)\bullet X)\circ (A\bullet ((A\bullet I)\circ (A\bullet A)))\circ A
\ar[d]^-{((J\circ A)\bullet X)\circ (A\bullet \zeta)\circ A}&
X\circ (A\bullet I)\circ (A\bullet A) \circ A
\ar[r]^-{\raisebox{7pt}{${}_
{X\circ (A\bullet I)\circ (A\bullet A) \circ\Delta}$}}
\ar[d]^-{X\circ \zeta \circ A}&
X\circ (A\bullet I)\circ (A\bullet A) \circ (A\bullet A)
\ar[r]^-{\raisebox{7pt}{${}_
{X\circ (A\bullet I)\circ\zeta}$}}
\ar[d]^-{X\circ \zeta \circ (A\bullet A)}
\ar@{}[rd]|-{\eqref{eq1.1}}&
X \circ (A\bullet I)\circ ((A\circ A)\bullet (A\circ A))
\ar[d]_-{X\circ \zeta}
\ar[u]^-{X \circ (A\bullet I)\circ (\mu \bullet \mu)}\\
&
((J\circ A)\bullet X)\circ (A\bullet (A\circ A)\bullet A) \circ A
\ar[d]^-{((J\circ A)\bullet X)\circ (A\bullet \mu \bullet A) \circ A}&
X\circ ((A\circ A)\bullet A)\circ A
\ar[d]^-{X\circ (\mu \bullet A)\circ A}&
X\circ ((A\circ A)\bullet A)\circ (A\bullet A)
\ar[r]^-{X\circ \zeta}
\ar[d]^-{X\circ (\mu \bullet A) \circ (A\bullet A)}&
X\circ ((A\circ A\circ A)\bullet (A\circ A))
\ar[r]^-{X\circ ((A\circ \mu)\bullet \mu)}
\ar[d]_-{X\circ ((\mu\circ A)\bullet (A\circ A))}&
X\circ ((A\circ A)\bullet A)
\ar[d]_-{X\circ (\mu \bullet A)}\\
((J\circ A)\bullet X)\circ (A\bullet A)\circ A
\ar[r]^-{\raisebox{7pt}{${}_
{((J\circ A)\bullet X)\circ (A\bullet \Delta)\circ A}$}}
\ar[d]^-{\zeta \circ A}
\ar[rrd]^-{\kappa_{X,A}\circ A}&
((J\circ A)\bullet X)\circ (A\bullet A\bullet A)\circ A
\ar[r]^-{\kappa_{X,A\bullet A}\circ A}
\ar@{}[rd]^-{\textrm{Lemma\ \ref{lem:kappa}\ } (1)}&
X\circ (A\bullet A)\circ A
\ar[r]^-{X\circ (A\bullet A)\circ \Delta}
\ar@{}[rrrd]|-{\eqref{eq:WB}}&
X\circ (A\bullet A)\circ (A\bullet A)
\ar[r]^-{X\circ \zeta}&
X\circ ((A\circ A)\bullet (A\circ A))
\ar[r]^-{X\circ (\mu \bullet \mu)}&
X\circ (A\bullet A)
\ar[d]_-{X\circ (\varepsilon \bullet A)}\\
((J\circ A\circ A)\bullet (X\circ A))\circ A
\ar[r]_-{
\raisebox{-7pt}{${}_{((J\circ \varepsilon.\mu)\bullet (X\circ A))\circ A}$}}&
((J\circ J)\bullet (X\circ A))\circ A
\ar[r]_-{(\varpi \bullet (X\circ A))\circ A}&
X\circ A \circ A
\ar[rr]_-{X\circ \mu}
\ar[u]^-{X\circ \Delta \circ A}&&
X\circ A
\ar[ru]^-{X\circ \Delta}
\ar@{=}[r]&
X\circ A
\ar@/_5pc/[uuuuuuuu]^(.65)\zeta
}
$$}
\eject

{\white .}\hspace{-2cm}
{\em Proof of Lemma \ref{lem:pi_colinear}, computation of
$((J\circ A)\bullet \sqcap^R_\circ) .\zeta.(J\circ\Delta).\sqcap^R_\circ$.}
\vspace{1cm}

\scalebox{1}{
$$\label{page_sqcap_colinear_1}
{\white .}\hspace{-2cm}
\xymatrix@C=15pt@R=30pt{
J\circ A
\ar@{=}[dd]
\ar[rr]^{J\circ A\circ\eta} &&
J\circ A\circ A
\ar@{}[rrdd]|-{(\ast)}
\ar[rr]^{J\circ A\circ\Delta} &&
J\circ A\circ(A\bullet A)
\ar[r]^{J\circ A\circ(\Delta\bullet A)} &
J\circ A\circ(A\bullet A\bullet A)
\ar[d]_-\zeta\\
&&&&&(J\circ (A\bullet A))\bullet (J\circ A \circ A)
\ar[d]_-{\zeta\bullet (J\circ A \circ A)}\\
J\circ A
\ar[r]^-{\raisebox{6pt}{${}_{J\circ A \circ \Delta.\eta}$}}
\ar@{=}[dd]
\ar@{}[rrdd]|-{\eqref{eq:sqcap^R}}&
J\circ A \circ (A\bullet A)
\ar[r]^-\zeta&
(J\circ A)\bullet (J\circ A \circ A)
\ar[d]^-{(J\circ A)\bullet (J\circ \varepsilon.\mu)}
\ar[r]^-{\raisebox{6pt}
{${}_{(J\circ A\circ \Delta.\eta)\bullet (J\circ A \circ A)}$}}&
(J\circ A\circ (A\bullet A))\bullet (J\circ A \circ A)
\ar[r]^-{\raisebox{6pt}{${}_{\zeta\bullet (J\circ A \circ A)}$}}&
(J\circ A)\bullet (J\circ A\circ A)\bullet (J\circ A \circ A)
\ar[r]^-{\raisebox{6pt}{
${}_{(J\circ A)\bullet (J\circ \mu)\bullet (J\circ A \circ A)}$}}&
(J\circ A)\bullet (J\circ A)\bullet (J\circ A \circ A)
\ar[d]_-{(J\circ A)\bullet (J\circ A)\bullet (J\circ \varepsilon.\mu)}\\
&&
(J\circ A)\bullet (J\circ J)
\ar[d]^-{(J\circ A)\bullet \varpi}&&&
(J\circ A)\bullet (J\circ A)\bullet (J\circ J)
\ar[d]_-{(J\circ A)\bullet (J\circ A)\bullet \varpi}\\
J\circ A
\ar@{=}[d]
\ar[rr]^-{\sqcap^R_\circ}&&
J\circ A
\ar[r]^-{J\circ A \circ \Delta.\eta}&
J\circ A \circ (A\bullet A)
\ar[r]^-\zeta&
(J\circ A)\bullet (J\circ A \circ A)
\ar[r]^-{(J\circ A)\bullet (J\circ \mu)}
\ar@{}[d]|-{\eqref{eq:wbm_1.10}}&
(J\circ A)\bullet (J\circ A)
\ar[d]_-{(J\circ A)\bullet \sqcap^R_\circ}\\
J\circ A
\ar[rr]_-{J\circ A \circ \Delta.\eta}&&
J\circ A \circ (A\bullet A)
\ar[r]_-\zeta&
(J\circ A)\bullet (J\circ A \circ A)
\ar[r]_-{(J\circ A)\bullet (J\circ \mu)}
\ar[ru]^-{(J\circ A)\bullet (\sqcap^R_\circ \circ A)\qquad}&
(J\circ A)\bullet (J\circ A)
\ar[r]_-{(J\circ A)\bullet \sqcap^R_\circ}&
(J\circ A)\bullet (J\circ A)}
$$}
\eject

{\white .}\hspace{-3cm} {\em Proof of Lemma \ref{lem:pi_colinear},
 computation of $(\sqcap^R_\circ\bullet(J\circ A)) .\zeta.(J\circ\Delta)$.}
\vspace{1cm}

\scalebox{.93}{
$$\label{page_sqcap_colinear_2}
{\white .}\hspace{-3.8cm}
\xymatrix@C=10pt @R=30pt{
J\circ A
\ar[rrrr]^-{J\circ \Delta}
\ar@{=}[dd]
\ar@{}[rrrrd]|-{\eqref{eq:WB}}&&&&
J\circ (A \bullet A)
\ar[r]^-\zeta
\ar@{=}[d]&
(J \circ A)\bullet (J\circ A)
\ar[dd]_-{(J \circ A\circ \Delta.\eta)\bullet (J\circ A)}\\
&
J\circ A \circ A
\ar[ul]_-{J\circ \mu}
\ar[r]^-{J\circ \Delta\circ \Delta}&
J\circ (A \bullet A) \circ (A \bullet A)
\ar[r]^-{J\circ \zeta}
\ar@{}[dd]|-{(\ast)}&
J\circ ((A\circ A) \bullet (A\circ A))
\ar[r]^-{J\circ (\mu\bullet \mu)}
\ar[d]^-{J\circ ((A\circ A\circ \Delta.\eta) \bullet (A\circ A))}&
J\circ (A \bullet A)\\
J\circ A
\ar[ur]_(.6){\ J\circ A \circ \eta}
\ar[dd]^-{J\circ A \circ \Delta.\eta}
\ar[r]^-{J\circ \Delta}&
J \circ (A\bullet A)
\ar[dd]^-{J \circ (A\bullet A) \circ \Delta^2.\eta}
\ar[ur]_(.6){\quad J \circ (A\bullet A) \circ \Delta.\eta}&
&
J\circ ((A\circ A \circ (A\bullet A)) \bullet (A\circ A))
\ar[r]^-{\raisebox{6pt}{${}_{J\circ ((\mu \circ (A\bullet A)) \bullet \mu)}$}}
\ar[d]^-{J\circ (\zeta \bullet (A\circ A))}&
J\circ ((A\circ (A\bullet A))\bullet A)
\ar[r]^-\zeta&
(J\circ A\circ (A\bullet A))\bullet (J\circ A)
\ar[d]_-{(J\circ \zeta)\bullet (J\circ A)}\\
&&
J\circ ((A\circ (A\bullet A))\bullet (A\circ A))
\ar[rd]^-{J\circ (\zeta \bullet (A\circ A))}
\ar@{}[d]|-{\eqref{eq1.1}}&
J\circ ((J\circ A)\bullet (A\circ A \circ A)\bullet (A\circ A))
\ar[r]^-{\raisebox{6pt}{
${}_{J\circ ((J\circ A)\bullet (\mu \circ A)\bullet \mu)}$}}
\ar[d]^-{J\circ ((J\circ A)\bullet (A\circ \mu)\bullet (A\circ A))}&
J\circ ((J\circ A)\bullet (A\circ A)\bullet A)
\ar[r]^-\zeta
\ar[d]^(.4){J\circ ((J\circ A)\bullet \varepsilon.\mu \bullet A)}&
(J\circ ((J\circ A)\bullet (A\circ A)))\bullet (J\circ A)
\ar[d]_(.6){(J\circ ((J\circ A)\bullet \varepsilon.\mu))\bullet (J\circ A)}\\
J\circ A \circ (A\bullet A)
\ar[r]^-{\raisebox{6pt}{${}_{J\circ \Delta \circ (A\bullet \Delta)}$}}
\ar@{=}[d]&
J\circ (A\bullet A) \circ (A\bullet A \bullet A)
\ar[r]^-{J\circ \zeta}
\ar[ru]^-{J\circ \zeta}&
J\circ ((J\circ A)\bullet ((A\bullet A)\circ (A\bullet A)))
\ar[r]^-{\raisebox{6pt}{${}_{J\circ ((J\circ A)\bullet \zeta)}$}}
\ar@{}[rrd]|-{\eqref{eq:WB}}&
J\circ ((J\circ A)\bullet (A\circ A)\bullet (A\circ A))
\ar[r]^-{\raisebox{6pt}{${}_{J\circ ((J\circ A)\bullet \varepsilon.\mu\bullet \mu)}$}}&
J\circ ((J\circ A)\bullet A)
\ar[r]^-\zeta
\ar@{=}[d]&
(J\circ J\circ A)\bullet (J\circ A)
\ar[d]_-{(\varpi \circ A)\bullet (J\circ A)}\\
J\circ A \circ (A\bullet A)
\ar[rr]^-{J\circ \zeta}
\ar[rd]^-{\zeta \circ (A\bullet A)}
\ar@{=}[dd]^-{\eqref{eq1.2}}
\ar@{}[rrd]|-{\eqref{eq1.1}}&&
J\circ ((J\circ A)\bullet (A\circ A))
\ar[u]_-{J\circ ((J\circ A)\bullet (\Delta\circ \Delta))}
\ar[rr]^-{J\circ ((J\circ A)\bullet \mu)}
\ar[d]^-\zeta&&
J\circ ((J\circ A)\bullet A)&
(J\circ A) \bullet (J\circ A)
\ar@{=}[dd]\\
&
((J\circ J)\bullet (J\circ A))\circ (A\bullet A)
\ar[r]^-\zeta
\ar[ld]^-{\quad (\varpi \bullet (J\circ A))\circ (A\bullet A)}&
(J\circ J \circ A)\bullet (J\circ A\circ A)
\ar[rr]^-{(\varpi \circ A)\bullet (J\circ A\circ A)}&&
(J \circ A)\bullet (J\circ A\circ A)
\ar@{=}[d]\\
J\circ A \circ (A\bullet A)
\ar[rrrr]_-\zeta&&&&
(J\circ A)\bullet (J\circ A \circ A)
\ar[r]_-{(J\circ A)\bullet (J\circ \mu)}&
(J \circ A)\bullet (J\circ A)}
$$}
\eject

{\white .}\hspace{-3cm} {\em Proof of Lemma \ref{lem:mu_colinear}.}
\vspace{1cm}

\scalebox{.99}{
$$\label{page_mu_colinear}
{\white .}\hspace{-2cm}
\xymatrix@C=20pt{
A \circ A \ar@{=}[rr]\ar[dd]^-{A\circ \Delta}&&
A\circ A \ar[rrr]^-{\mu}\ar[d]^-{\Delta\circ \Delta}
\ar@{}[rrrd]|-{\eqref{eq:WB}}&&&
A \ar[d]_-\Delta\\
&&
(A\bullet A)\circ (A\bullet A) \ar[r]^-\zeta 
\ar[d]^-{(A\bullet A)\circ (\Delta \bullet A)}&
(A\circ A)\bullet (A\circ A)\ar@{=}[r]
\ar[d]^-{(A\circ \Delta)\bullet (A \circ A)}
\ar@{}[rddd]|-{(A)}&
(A\circ A)\bullet (A\circ A)\ar[r]^-{\mu \bullet \mu}
\ar[d]^-{(A\circ A) \bullet \mu}&
A\bullet A\ar[dd]_-{(\tau\circ A)\bullet A}\\
A \circ (A\bullet A)
\ar[rr]^-{\Delta \circ (A\bullet \Delta)} \ar@{=}[d]&&
(A\bullet A)\circ (A\bullet A\bullet A)\ar[r]^-\zeta \ar[d]^-\zeta
\ar@{}[rd]|-{\eqref{eq1.1}}&
(A\circ (A\bullet A))\bullet (A\circ A) \ar[d]^-{\zeta \bullet (A\circ A)}&
(A\circ A)\bullet A\ar[d]^-{(\tau \circ A \circ A)\bullet A}\\
A \circ (A\bullet A)
\ar[r]^-{\raisebox{6pt}{${}_\zeta$}}
\ar[dd]^-{A\circ ((\tau\circ A)\bullet A)}&
(J\circ A)\bullet (A\circ A)
\ar[r]^-{\raisebox{6pt}{${}_{(J\circ A)\bullet (\Delta \circ \Delta)}$}}
\ar[dd]^-{(J\circ \tau\circ A)\bullet (A\circ A)}
\ar[rrdd]^-{(J\circ A)\bullet \mu}&
(J\circ A)\bullet ((A\bullet A)\circ (A\bullet A))
\ar[r]^-{\raisebox{6pt}{${}_{(J\circ A)\bullet \zeta}$}}
\ar@{}[rd]|-{\eqref{eq:WB}}&
(J\circ A)\bullet (A\circ A)\bullet (A\circ A)
\ar[d]^-{(J\circ A)\bullet \varepsilon.\mu\bullet (A\circ A)}&
(J\circ A \circ A) \bullet A \ar[r]^-{(J\circ\mu)\bullet A} 
\ar[dd]^-{\vartheta^R\bullet A}\ar@{}[rdd]|-{(B)}&
(J\circ A)\bullet A\ar[ddd]_-{\pi^R_\circ \bullet A}\\
&&&
(J\circ A)\bullet (A\circ A)\ar[d]^-{(J\circ A)\bullet \mu}&\\
A \circ ((J\circ A)\bullet A) \ar[d]^-{A \circ (\pi^R_\circ \bullet A)}&
(J\circ J \circ A)\bullet (A\circ A)\ar[rr]^-{(\varpi\circ A)\bullet \mu}
\ar[d]^-{(J\circ\pi^R_\circ)\bullet (A\circ A)}
\ar@{}[rrrd]|-{(C)}&&
(J\circ A)\bullet A\ar@{=}[r]&
(J\circ A)\bullet A\ar[rd]^-{\pi^R_\circ\bullet A}&\\
A \circ (R_\circ\bullet A)\ar[r]_-\zeta&
(J\circ R_\circ)\bullet (A\circ A)\ar[rr]_-{\gamma \bullet (A\circ A)}&&
 R_\circ \bullet (A\circ A)\ar[rr]_-{R_\circ\bullet \mu}&&
R_\circ\bullet A}
$$}

\eject
{\white .}\hspace{-3cm} {\em Proof of Proposition \ref{prop:Hopf_d_law},
 verification of the compatibility of $\psi$ with the unit.}
\vspace{1cm}

\scalebox{.84}{
$$\label{page_wdl_unit}
{\white .}\hspace{-4.3cm}
\xymatrix@C=10pt{
M\bullet A
\ar@{=}[rr]
\ar[dddddd]^-{(M\bullet A)\circ \eta}
\ar[rd]^-{(M\bullet A)\circ \delta}&
\ar@{}[d]|-{\eqref{eq1.2}}&
M\bullet A
\ar[dd]^-{M\bullet (A\circ \eta)}
\ar@{=}[rrr]&&
\ar@{}[dd]|(.6){\eqref{eq:WB}}&
M\bullet A
\ar[dd]_-{M\bullet \Delta}\\
\ar@{}[rddddd]|-{(\RRU)}&
(M\bullet A)\circ (I\bullet I)
\ar[ru]^-\zeta
\ar[d]^-{(M\bullet A)\circ (I\bullet \eta)}\\
&
(M\bullet A)\circ (I\bullet A)
\ar[r]^-\zeta
\ar[d]^-{(M\bullet A)\circ (I\bullet \Delta)}&
M\bullet (A\circ A)
\ar[uurrr]^-{M\bullet \mu}
\ar[r]^(.6){M\bullet (\Delta \circ \Delta)}&
M\bullet ((A\bullet A)\circ (A\bullet A))
\ar[r]^-{M\bullet \zeta}
\ar@{}[d]|-{\eqref{eq1.1}}&
M \bullet (A\circ A)\bullet (A\circ A)
\ar[r]^-{M\bullet \mu \bullet \mu}
\ar[d]^(.3){((M \bullet (A\circ A))\circ \Delta.\eta)\bullet (A\circ A)}&
M \bullet A \bullet A
\ar[d]_(.5){((M \bullet A)\circ \Delta.\eta) \bullet A}\\
&
(M\bullet A)\circ (I\bullet A\bullet A)
\ar[d]^-{(M\bullet A)\circ (((I\bullet A)\circ \Delta.\eta)\bullet A)}
\ar[r]^-{(M\bullet \Delta)\circ (I\bullet A\bullet A)}&
(M\bullet A \bullet A)\circ (I\bullet A\bullet A)
\ar[ru]^-\zeta\ar[r]^-\zeta
\ar[d]^-{(M\bullet A \bullet A)\circ
(((I\bullet A)\circ \Delta.\eta)\bullet A)}&
((M\bullet A)\circ (I\bullet A))\bullet (A\circ A)
\ar[ru]^-{\zeta \bullet (A\circ A)}
\ar[d]^-{((M\bullet A)\circ (I\bullet A)\circ \Delta.\eta)\bullet (A\circ A)}&
((M \bullet (A\circ A))\circ (A\bullet A))\bullet (A\circ A)
\ar[r]^-{\raisebox{6pt}{${}_{
((M \bullet \mu)\circ (A\bullet A))\bullet \mu}$}}\ar@{=}[d]&
((M \bullet A)\circ (A\bullet A))\bullet A
\ar[dd]_-{\zeta\bullet A}\\
&
(M\bullet A)\circ (((I\bullet A)\circ (A\bullet A))\bullet A)
\ar[d]^-{(M\bullet A)\circ (\zeta \bullet A)}&
(M\bullet A \bullet A)\circ (((I\bullet A)\circ (A\bullet A))\bullet A)
\ar[d]^-{(M\bullet A\bullet A)\circ (\zeta \bullet A)}&
((M\bullet A)\circ (I\bullet A)\circ (A\bullet A)) \bullet (A\circ A)
\ar[r]^-{\raisebox{6pt}{${}_{(\zeta\circ (A\bullet A)) \bullet (A\circ A)}$}}
\ar[d]^-{((M\bullet A)\circ \zeta) \bullet (A\circ A)}
\ar@{}[rd]|-{\eqref{eq1.1}}&
((M \bullet (A\circ A))\circ (A\bullet A))\bullet (A\circ A)
\ar[d]^-{\zeta \bullet (A\circ A)}\\
&
(M\bullet A)\circ (A\bullet (A\circ A)\bullet A)
\ar[r]^-{\raisebox{6pt}{${}_{(M\bullet \Delta)\circ (A\bullet (A\circ A)\bullet A)}$}}
\ar[d]^-{(M\bullet A)\circ (A\bullet \mu \bullet A)}&
(M\bullet A \bullet A)\circ (A\bullet (A\circ A)\bullet A)
\ar[r]^-\zeta&
((M\bullet A)\circ (A\bullet (A\circ A)))\bullet (A\circ A)
\ar[r]^-{\zeta \bullet (A\circ A)}&
(M\circ A)\bullet (A\circ A\circ A)\bullet (A\circ A)
\ar[r]^-{\raisebox{6pt}{${}_{(M\circ A)\bullet (\mu\circ A)\bullet \mu}$}}
\ar[d]^-{(M\circ A)\bullet (A\circ \mu)\bullet (A\circ A)}&
(M\circ A)\bullet (A\circ A)\bullet A
\ar[ddd]_-{(M\circ A)\bullet \mu \bullet A}\\
(M\bullet A)\circ A \ar[r]^-{(M\bullet A)\circ \Delta^2}
\ar[d]^-{(M\bullet A)\circ \Delta}&
(M\bullet A)\circ (A \bullet A\bullet A)
\ar[r]^-{(M\bullet \Delta)\circ (A \bullet A\bullet A)}&
(M\bullet A \bullet A)\circ (A \bullet A \bullet A)\ar[r]^-\zeta
\ar@{=}[d]\ar@{}[rrdd]|-{\eqref{eq1.1}}&
((M\bullet A)\circ (A \bullet A))\bullet (A\circ A)
\ar[r]^-{\zeta \bullet (A\circ A)}&
(M\circ A)\bullet (A\circ A)\bullet (A\circ A)\ar@{=}[dd]\\
(M\bullet A)\circ (A \bullet A)
\ar[rr]^-{(M\bullet \Delta)\circ (A \bullet \Delta)}
\ar[d]^-\zeta
&&
(M\bullet A \bullet A)\circ (A \bullet A \bullet A)\ar[d]^-\zeta
\\
(M\circ A)\bullet (A\circ A)\ar[rr]^-{(M\circ A)\bullet (\Delta\circ \Delta)}
\ar[d]^-{(M\circ A)\bullet \mu}
\ar@{}[rrrrd]|-{\eqref{eq:WB}}&&
(M\circ A)\bullet ((A \bullet A)\circ (A \bullet A))
\ar[rr]^-{(M\circ A)\bullet \zeta}&&
(M\circ A)\bullet (A\circ A)\bullet (A\circ A)
\ar[r]^-{(M\circ A)\bullet \mu \bullet \mu}&
(M\circ A)\bullet A \bullet A
\ar[d]_-{(M\circ A)\bullet \varepsilon \bullet A}\\
(M\circ A)\bullet A\ar@{=}[rrrr]&&&&
(M\circ A)\bullet A\ar[ru]^(.33){(M\circ A)\bullet \Delta}\ar@{=}[r]&
(M\circ A)\bullet A}
$$}
\eject

{\white .}\hspace{-3cm} {\em Proof of Lemma \ref{lem:lambda_fork}.}
\vspace{1cm}

\scalebox{1}{
$$\label{page_lambda_fork}
{\white .}\hspace{-2cm}
\xymatrix@C=45pt{
(P\bullet I)\circ L_\bullet \circ A \ar[rrr]^-{\gamma \circ A}
\ar[dddd]^-{(P\bullet I)\circ \alpha}
\ar[rd]^-{(P\bullet I)\circ L_\bullet \circ \Delta}
\ar@{}[rddd]|-{\textrm{Lemma~\ref{lem:mu_colinear}}}&&&
(P\bullet I)\circ A \ar@{=}[r]\ar[d]^-{(P\bullet I)\circ \Delta}
\ar@{}[rddd]|-{\eqref{eq:lambda^0}}&
(P\bullet I)\circ A\ar[ddd]_-{\lambda^0_P}\\
& 
(P\bullet I)\circ L_\bullet \circ (A \bullet A) 
\ar[d]^-{(P\bullet I)\circ\varrho \circ (A \bullet A)}
\ar[rr]^-{\gamma \circ (A \bullet A) }
\ar@{}[rrd]|-{\eqref{eq:gamma}}&&
(P\bullet I)\circ (A\bullet A) \ar@{=}[d]\\
&
(P\bullet I)\circ (L_\bullet \bullet I) \circ (A \bullet A) 
\ar[r]^-{\zeta \circ (A \bullet A)}
\ar[d]^-{(P \bullet I) \circ \zeta}
\ar@{}[rd]|-{\eqref{eq1.1}}&
((P\circ L_\bullet)\bullet I)\circ (A \bullet A) 
\ar[r]^-{(\xi \bullet I)\circ (A \bullet A)} \ar[d]^-\zeta&
(P\bullet I)\circ (A \bullet A) \ar[d]^-\zeta\\
&
(P\bullet I)\circ ((L_\bullet\circ A)\bullet A)\ar[r]^-\zeta
\ar[d]^-{(P\bullet I)\circ (\alpha \bullet A)}&
(P\circ L_\bullet \circ A)\bullet A
\ar[r]^-{(\xi \circ A)\bullet A}
\ar[d]^-{(P\circ \alpha)\bullet A}
\ar@{}[rd]|-{\eqref{eq:pi_L}}&
(P\circ A)\bullet A\ar[r]^-{\pi^R_{P\circ A,A}}
\ar[d]^-{\pi^L_{P,A}\bullet A}&
(P\circ A)\bullet_{R_\circ}A\ar[dd]_-{\pi^L_{P,A}\bullet_{R_\circ} A}\\
(P\bullet I)\circ A\ar@{=}[d]\ar[r]^-{(P\bullet I)\circ \Delta}
\ar@{}[rrd]|-{\eqref{eq:lambda^0}}&
(P\bullet I)\circ (A \bullet A)\ar[r]^-\zeta&
(P\circ A)\bullet A\ar[r]^-{\pi^L_{P,A}\bullet A}
\ar[d]^-{\pi^R_{P\circ A,A}}&
(P\circ_{L_\bullet} A)\bullet A \ar[d]^-{\pi^R_{P\circ_{L_\bullet} A,A}}\\
(P\bullet I)\circ A\ar[rr]_-{\lambda^0_P}&&
(P\circ A)\bullet_{R_\circ} A \ar[r]_-{\pi^L_{P,A}\bullet_{R_\circ} A}&
(P\circ_{L_\bullet} A)\bullet_{R_\circ} A\ar@{=}[r]&
(P\circ_{L_\bullet} A)\bullet_{R_\circ} A}
$$}

\end{landscape}

\end{document}